\def\usexypic{1}
\newcounter{lemmacounter}
\newcounter{thmcounter}
\newcounter{propcounter}
\newcounter{corcounter}
\numberwithin{lemmacounter}{section}
\numberwithin{propcounter}{section}
\numberwithin{equation}{section}
\newtheorem{lemma}[lemmacounter]{Lemma}
\newtheorem{proposition}[propcounter]{Proposition}
\newtheorem*{corollary*}{Corollary}
\newtheorem{corollary}[corcounter]{Corollary}
\newtheorem{theorem}[thmcounter]{Theorem}
\newtheorem*{theorem*}{Theorem}
\newcommand{\loccitd}{\hbox{\it loc.\,cit. }}
\newcommand{\gal}[1]{{\rm Gal}({#1})}
\newcommand{\ram}[2]{{G}_{#2}({#1})}
\newcommand{\gl}[2]{{\rm GL}_{#2}({#1})}
\newcommand{\mat}[2]{{\rm Mat}_{#2}({#1})}
\newcommand{\IG}{{\bf G}}
\newcommand{\IC}{{\bf C}}
\newcommand{\IR}{{\bf R}}
\newcommand{\IQbar}{\overline{\bf Q}}
\newcommand{\IZ}{{\bf Z}}
\newcommand{\IN}{{\bf N}}
\newcommand{\IF}{{\bf F}}
\newcommand{\IQ}{\mathbf{Q}}
\newcommand{\IQpbar}{\overline{\mathbf{Q}}_p}
\newcommand{\IQp}{\mathbf{Q}_p}
\newcommand{\IQpunr}{\mathbf{Q}_p^{\rm unr}}
\newcommand{\tors}[1]{#1_{\rm tors}}
\newcommand{\heightS}{h}
\newcommand{\height}[1]{\heightS({#1})}
\newcommand{\ssm}{\smallsetminus}
\newcommand{\aut}[1]{{{\rm Aut\,}{#1}}}
\newcommand{\autc}[2]{{{\rm Aut}_{#1}{#2}}}
\newcommand{\ab}[1]{{#1}^{\rm ab}}
\newcommand{\tr}[1]{{#1}^{\rm mr}}
\renewcommand{\O}{\mathcal{O}}
\newcommand{\imag}[1]{{\rm Im}({#1})}
\newcommand{\atopx}[2]{\genfrac{}{}{0pt}{}{#1}{#2}}
\newcommand{\T}[2]{T_{#2}({#1})}
\begin{document}


\author{P. Habegger}
\title[ Small Height and Nonabelian Extensions]{ 
{  Small Height and 
  Infinite Nonabelian  Extensions} \\
}

\subjclass[2010]{Primary 11G50; Secondary 11G05, 14H52, 14G40}

\maketitle

\begin{abstract}
Let $E$ be an elliptic curve defined over $\IQ$ without complex
multiplication. 
The field $F$ generated over $\IQ$ by all torsion points of $E$
is an infinite, nonabelian Galois extension of the rationals which
has unbounded, wild ramification above all primes. We prove
that the absolute logarithmic Weil height of an element of
$F$ is either zero or bounded from below  by a positive
constant depending only on $E$. We also show that the N\'eron-Tate
height has a similar gap on $E(F)$
and use this to determine the structure of the group $E(F)$. 
\end{abstract}

\section{Introduction}

By Northcott's Theorem, there are only finitely many algebraic numbers
of  bounded degree and bounded absolute logarithmic Weil height, or
short, height. 
This height and the relevant properties are covered in greater detail
in Section \ref{sec:height}.
Kronecker's Theorem states that
an algebraic number has height zero if and only if it is zero or a
root of unity. 
So any non-zero element of a number field that is
not a root of unity  has 
height bounded from below uniformly by a positive real number.

A field that is algebraic (but not necessarily of finite degree)
over $\IQ$ 
is said to satisfy the Bogomolov property if zero is  isolated among its height values.
The property's name was
 motivated by the eponymous conjecture
on points of small N\'eron-Tate
 height on curves of genus at least $2$
and appears in work of Bombieri and Zannier \cite{BZ:infinitebogo}.

The fundamental example $\height{2^{1/n}} = (\log 2)/n$ shows that 
$\IQ(2^{1/2},2^{1/3},\ldots)$, and so in particular the field of algebraic
numbers, does not satisfy the Bogomolov property.
But there are many infinite  extensions  which do and we
 will mention some known
examples after stating our main results. 

In this paper we first exhibit a new class  of
 infinite,  nonabelian
Galois extensions of $\IQ$ satisfying
 the Bogomolov property. These will be
related to an elliptic curve $E$ defined over $\IQ$.
We let $\tors{E}$ denote the group of
torsion points of $E$ defined over an algebraic closure  
 of $\IQ$. The field $\IQ(\tors{E})$ is generated by the set of $x$-
 and $y$-coordinates of the points in
 $\tors{E}$ with respect to a  Weierstrass model
 of $E$ with rational coefficients. 

 \begin{theorem}
   \label{thm:main}
Suppose $E$ is an elliptic curve defined over $\IQ$.
Then $\IQ(\tors{E})$ satisfies the Bogomolov property. 
 \end{theorem}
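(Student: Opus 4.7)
The plan is to carry out a local analysis at a rational prime $p$ of good reduction for $E$ and combine it with the product formula. Let $F = \IQ(\tors{E})$ and decompose $F = F^{(p)} \cdot \IQ(\stors{E}{p})$, where $F^{(p)}$ is generated by the torsion of order coprime to $p$. By the N\'eron-Ogg-Shafarevich criterion, $F^{(p)}/\IQ$ is unramified at $p$, so for any place $v$ of $F$ above $p$ the completion of $F^{(p)}$ at $v$ embeds into $\IQpunr$; all ramification of $F_v/\IQp$ therefore originates from $\IQp(\stors{E}{p})$.

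I would next exploit a Frobenius congruence at $v$. The maximal unramified subfield of $F_v/\IQp$ admits a canonical Frobenius lift $\Fr{p}$, and by Serre's open image theorem the local Galois image in $\gl{\IZ_p}{2}$ acting on $T_p E$ is open, so both the action of $\Fr{p}$ on the prime-to-$p$ torsion and the action of inertia on $\stors{E}{p}$ are effectively described. For any $\alpha \in \O_{F_v}$ the definition of Frobenius gives the congruence $\Fr{p}(\alpha) \equiv \alpha^p \pmod{\mathfrak p_v}$, and iterating this together with the $p$-adic Lie structure of $\imag{\rho_p}$ should yield refined congruences modulo higher powers of $\mathfrak p_v$.

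Now let $\alpha \in F^\times$ be non-torsion of very small height; I would seek a contradiction by combining the Frobenius congruence at $v$ with the standard height decomposition
\[
 \height{\alpha} \;=\; \sum_w d_w \log^+|\alpha|_w .
\]
Because $\height{\alpha}$ is tiny, $\alpha$ is a $v$-adic unit for almost all $v$ and its $v$-adic distance to roots of unity in $F_v$ is correspondingly constrained. The Frobenius relation then forces $\alpha$ to lie in a very small subset of $\O_{F_v}^\times$ — too small, for an appropriate constant $c(E)$, to contain any non-torsion element. This is in the spirit of the Bombieri-Zannier strategy for $\IQ^{\rm ab}$, with the nonabelian local data attached to $E$ replacing the cyclotomic data.

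The principal obstacle is that $\IQp(\stors{E}{p})/\IQp$ is infinitely and wildly ramified; in particular $[F_v:\IQp] = \infty$ at every $v$ above $p$, so the bounded local degree hypothesis of Bombieri-Zannier fails outright. Overcoming this requires a sharp quantitative analysis of the ramification filtration on $\O_{F_v}^\times$ coming from the open image of Galois in $\gl{\IZ_p}{2}$, together with inputs from the theory of the formal group of $E$ at $p$, to control precisely how much the basic Frobenius congruence can be promoted to higher powers of $\mathfrak p_v$. This is where I expect the bulk of the work to lie.
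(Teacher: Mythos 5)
Your sketch overlaps with the paper's strategy in that it works at a fixed auxiliary prime $p$, separates the prime-to-$p$ torsion from the $p$-torsion, uses a Frobenius congruence, and identifies the infinite wild ramification of $\IQ_p(\stors{E}{p})/\IQ_p$ as the main obstruction. But two ideas that are essential to making the argument close are missing.

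First, good reduction at $p$ is not enough; you need \emph{supersingular} reduction, and this is not a technical convenience but the heart of the matter. Your plan tacitly assumes that the Frobenius congruence at a place $v\mid p$ propagates to the other places $\sigma^{-1}v$ above $p$, since the product formula needs an estimate at every such place. That propagation requires the relevant Galois element at $v$ to commute with every $\sigma\in\gal{\IQ(E[N])/\IQ}$, and for a general prime of good reduction the Frobenius lift $\Fr p$ is nowhere near the center. The paper's remedy is to pick (via Elkies) a supersingular prime: there $\varphi_{p^2}$ acts as the scalar $\pm p$ on the prime-to-$p$ torsion, hence lies in the center of the whole Galois group, and the congruence transfers. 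Supersingularity is also what makes the $p$-part tractable: it is precisely what forces the formal group at $p$ to have height $2$, which is what lets one recognize it as a Lubin-Tate module and compute the local Galois group and its ramification filtration explicitly; the ``open image in $\gl{\IZ_p}{2}$'' input you invoke does not provide the centralizer control one needs. Since the supply of supersingular primes is only guaranteed over $\IQ$, this is also the reason the paper restricts to $E/\IQ$.

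Second, the purely local, product-formula argument you outline cannot by itself yield the theorem. In the ramified case the non-Archimedean gain at places above $p$ is much weaker than in the unramified case, and when you feed it into the product formula, the crude Archimedean triangle inequality introduces an additive $\log 2$ that swamps the $p$-adic contribution; one gets no lower bound for $\height{\alpha}$ at all. The paper has to bring in a genuinely global Archimedean ingredient, Bilu's equidistribution theorem, applied to a carefully constructed auxiliary quantity $\beta$ of small height (plus a Kummerian descent to reduce to a case where $\beta$ can be built). Without some replacement for that equidistribution step your approach stalls exactly at the point you flag as ``where I expect the bulk of the work to lie.''
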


The N\'eron-Tate height is
 a natural height function defined on the algebraic points of the
 elliptic curve $E$  itself, we will review its definition
 in Section \ref{sec:ntheight}.
The analog of Northcott's Theorem holds;
in other words $E$ contains only finite many points 
 of bounded degree and bounded N\'eron-Tate height. 
  Kronecker's
Theorem for the N\'eron-Tate height is also true
 since $\hat h$ vanishes precisely on the torsion points
of $E$. 

The second result of this paper is the elliptic  analog of Theorem
\ref{thm:main} and bounds from below the N\'eron-Tate height.
It gives an affirmation answer to a question of
 Baker \cite{Baker:CMBogomolov} for elliptic curves defined over $\IQ$.

\begin{theorem}
\label{thm:main_ec}
Suppose $E$ is an elliptic curve defined over $\IQ$. 
 There exists
  $\epsilon > 0$ such that if $A\in E(\IQ(\tors{E}))$ is non-torsion, then
 $  \hat h(A) \ge \epsilon$.
\end{theorem}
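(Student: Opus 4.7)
The plan is to deduce Theorem~\ref{thm:main_ec} from Theorem~\ref{thm:main} via the standard comparison between the N\'eron-Tate height $\hat h$ on $E$ and the Weil height $h$ of the $x$-coordinate. Fix a Weierstrass model of $E$ over $\IQ$; there is a constant $C = C(E)$ such that
\[
\bigl|2\hat h(P) - h(x(P))\bigr| \le C \quad \text{for all } P \in E(\IQbar).
\]
Write $F = \IQ(\tors E)$. Since $E(F)$ is a group, any $A \in E(F)$ satisfies $[n]A \in E(F)$ and hence $x([n]A) \in F$ for every $n \ge 1$. By Theorem~\ref{thm:main}, each $h(x([n]A))$ either vanishes or exceeds the Bogomolov constant $c_0 = c_0(E) > 0$.

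Take $A \in E(F)$ non-torsion and let $N$ be the smallest $n \ge 1$ with $h(x([n]A)) \ge c_0$. Such an $N$ exists: otherwise $h(x([n]A)) = 0$ for every $n$, forcing $\hat h([n]A) \le C/2$ by the comparison and hence $\hat h(A) \le C/(2n^2) \to 0$, contradicting $\hat h(A) > 0$. The comparison at $[N]A$ then yields $\hat h(A) \ge (c_0 - C)/(2N^2)$, while the minimality of $N$ applied at $[N-1]A$ (when $N \ge 2$) gives $\hat h(A) \le C/(2(N-1)^2)$. Combining these forces $(c_0 - C)(N - 1)^2 \le C N^2$, which bounds $N$ uniformly by some $N_0 = N_0(E)$ provided $c_0 > 2C$, producing the lower bound $\epsilon = (c_0 - C)/(2N_0^2)$.

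The principal obstacle is the requirement $c_0 > 2C$, over which we have no a priori control. The natural way to close this gap is to sharpen the height comparison place-by-place. Decomposing $\hat h$ and $\tfrac12 h\circ x$ into local N\'eron-Tate heights, the local discrepancy $\hat\lambda_v - \tfrac12 \lambda_v\circ x$ vanishes at every place $v$ of $F$ above a prime of good reduction of $E$, so $C$ collects contributions only from the finitely many places above primes of bad reduction. One then refines Theorem~\ref{thm:main} to isolate the analogous local contributions on the Weil side, so that the bad-prime pieces cancel in the chain of inequalities above.

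Alternatively, one can mirror the proof of Theorem~\ref{thm:main} itself in the N\'eron-Tate setting. Selecting a prime $p$ of good, ideally supersingular, reduction of $E$, one analyses $\hat\lambda_v(A)$ at places $v \mid p$ by exploiting the wild ramification of $F/\IQ$ at $p$ highlighted in the abstract together with the action of a Frobenius lift on the formal group of $E$. This yields a uniform lower bound on $\sum_{v\mid p} \hat\lambda_v(A)$ for non-torsion $A$ and sidesteps the $x$-coordinate comparison altogether, at the cost of redoing the ramification analysis directly in the N\'eron-Tate setting.
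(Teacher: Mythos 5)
Your primary argument has a gap you already identify, and it is genuine: it needs the Bogomolov constant $c_0$ from Theorem~\ref{thm:main} to satisfy $c_0 > 2C$, where $C$ is the global constant in $|2\hat h - h\circ x|\le C$, and there is no reason to expect this. The $c_0$ that the proof of Theorem~\ref{thm:main} actually produces depends on an auxiliary supersingular prime $p$ satisfying (P1) and (P2) and decays roughly like $(\log p)/p^8$, while $C$ is governed by the bad reduction of the chosen Weierstrass model; neither quantity is under our control. When $c_0\le 2C$ the two bounds $\hat h(A)\ge (c_0-C)/(2N^2)$ and $\hat h(A)\le C/(2(N-1)^2)$ are compatible for every $N$, so $N$ cannot be bounded and the argument yields nothing.

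Of your two proposed fixes, only the second points the right way, and it is still missing the decisive ingredient. The paper does redo the local analysis at the supersingular prime directly for local N\'eron--Tate heights (Lemmas~\ref{lem:metric2_ec}, \ref{lem:metric1_ec}, \ref{lem:primetoptorsion_ec}, \ref{lem:ptorsion_ec}, \ref{lem:pM_ec} and Proposition~\ref{prop:prebogo_ec}), producing a non-torsion companion point $B$ of small height with $\hat h_p(B)$ uniformly bounded below. But, unlike local Weil heights, the local N\'eron--Tate heights $\lambda_v$ can be negative at non-Archimedean places of bad reduction and at Archimedean places, so that positive contribution could be cancelled. The paper copes (Lemma~\ref{lem:partialheightlb}) by proving $\liminf_k \hat h_\ell(B_k)\ge 0$ at every other place $\ell$ along a small-height sequence, using Szpiro--Ullmo--Zhang equidistribution at infinity and, crucially, Chambert-Loir's non-Archimedean equidistribution at places of split multiplicative reduction after base-changing to achieve semistability. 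This is the piece your sketch omits. Your first alternative --- localizing the comparison between $\hat h$ and $h\circ x$ --- does not by itself break the impasse, because the Bogomolov bound from Theorem~\ref{thm:main} is a single global number with no local decomposition; refining it to supply one amounts to reproving Theorem~\ref{thm:main_ec} from scratch. (The paper also disposes of the CM case up front via Baker's theorem, which your plan leaves out.)
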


We now discuss how  our results are related to  the literature.
 Amoroso and Dvornicich \cite{AD:abelianbogo} proved that all
 abelian extensions of $\IQ$ satisfy the Bogomolov property
 thus affirming a question raised by Bombieri and Zannier. 
This result covers the field generated by all roots of unity. 
Later Amoroso and Zannier proved  a more precise  height lower bound 
\cite{AZ:relDobrowolski} in the spirit of  Lehmer's question. 
A special case of their result implies that the maximal abelian extension
 $\ab{K}$ of a number field $K$ satisfies the Bogomolov property. 
This statement was later refined  by the same authors
\cite{AZ:unifrel} to yield a uniform lower bound that depends only
on the degree $[K:\IQ]$. 

We say that an elliptic curve defined over a field of characteristic
zero has complex multiplication if it has a non-trivial endomorphism
defined over an algebraic closure of the base field. 

On the elliptic side, Baker \cite{Baker:CMBogomolov} proved that if
$E$ is defined over $K$ and either has complex
multiplication or non-integral $j$-invariant, then a point in $E(\ab{K})$ cannot have arbitrarily
small positive N\'eron-Tate height. 
Silverman \cite{Silverman:abelian} proved the same conclusion 
with no restriction on $E$. 

If $E$ has complex multiplication and if all
endomorphisms of $E$ are  defined over $K$, then $K(\tors{E})$ is
an infinite abelian extension of $K$.
In other words $K(\tors{E})\subset \ab{K}$. 
 Amoroso and Zannier's result implies that
 $\IQ(\tors{E})$ satisfies the Bogomolov property as 
this property 
 is  clearly inherited by  subfields.

So for $K=\IQ$ we recover 
 Theorem \ref{thm:main} if $E$ has complex
multiplication. Under the same assumption on $E$,
 Baker's result implies Theorem
\ref{thm:main_ec}. 

Our results, however, hold when
 $E$  does not have complex
multiplication and is defined over $\IQ$. In this case
$\IQ(\tors{E})$ is still a Galois extension of $\IQ$. But it is never
abelian as we will see in a moment. 
The Galois group of this extension is sufficiently anabelian to push
Theorems \ref{thm:main} and \ref{thm:main_ec}
  outside the immediate range of  earlier
results involving abelian extensions. 
Indeed, if we were to assume $\IQ(\tors{E}) \subset \ab{K}$
for some number field $K$,
then $\gal{\IQ(\tors{E})/\IQ}$ would contain 
the abelian subgroup $\gal{\IQ(\tors{E})/K\cap
  \IQ(\tors{E})}\cong \gal{K(\tors{E})/K}$ with
 index bounded by $d = [K:\IQ]$.
 For an integer $N\ge 1$ we let  $E[N]\subset \tors{E}$ denote the
 subgroup of points of order dividing $N$.
It is isomorphic to $(\IZ/N\IZ)^2$.
By Serre's Theorem \cite{Serre:Galois} there exists a prime $p > d$
such that the natural Galois representation
$\gal{\IQ(\tors{E})/\IQ}\rightarrow \aut E[p]$  is surjective. 
We  fix an isomorphism $\aut E[p]\cong \gl{\IF_p}{2}$
and conclude that $\gl{\IF_p}{2}$ contains an abelian subgroup of
index at most $d$. By group theory, the $d!$-th power of a matrix
in $\gl{\IF_p}{2}$ lies in said abelian subgroup. In particular, 
the matrices
\begin{equation*}
\left(
  \begin{array}{cc}
    1 & d! \\ 0 & 1
  \end{array}
\right)
\quad\text{and}\quad
\left(
  \begin{array}{cc}
    1 & 0 \\ d! & 1
  \end{array}
\right)
\end{equation*}
commute in $\gl{\IF_p}{2}$. This is absurd because $p \nmid d!$.

Height lower bounds are not only available for  extensions with
certain Galois groups,  but also for  fields 
satisfying  a local restriction.
 The early
result of Schinzel \cite{Schinzel:ProdConj}
 implies that $\tr{\IQ}$,
 the maximal totally real extension of $\IQ$,
satisfies the Bogomolov property. 
The Weil pairing is compatible with the action of the Galois group. 
From this we find that $\IQ(E[N])$ contains a primitive
$N$-th root of unity. So $\IQ(E[N])$ cannot be
contained in a totally real number field if $N\ge 3$.

Zhang \cite{Zhang:equi} proved the analog of Schinzel's result for
abelian varieties. In  our case it states that
 $E(\tr{\IQ})$  contains only finitely many torsion points
and does not 
 contain points of arbitrarily
small positive N\'eron-Tate height.  Zhang
 deduced
the same consequence for finite extensions of
 $\tr{\IQ}$. Of course, $E(\IQ(\tors{E}))$ contains infinitely many torsion
points. So 
 $\IQ(\tors{E})$ is not a finite extension of a 
 totally real extension of $\IQ$.

 Bombieri and Zannier \cite{BZ:infinitebogo} studied an
 analog of Schinzel's result where the $p$-adic numbers $\IQ_p$
replace the reals.  
They discovered that any normal algebraic extension of $\IQ$ which admits an
embedding into a finite extension $L$ of  $\IQ_p$ satisfies the Bogomolov property. 
Our field $\IQ(\tors{E})$ cannot lie in such an $L$, even if $E$
is allowed to have complex multiplication. Indeed, otherwise we would have
$\IQ(E[p^n]) \subset L$ for all positive integers $n$. 
As above we see that $\IQ(E[p^n])$ contains  a primitive
$p^n$-th root of unity $\zeta$. It is known that
 $\IQ_p(\zeta) / \IQ_p$ 
has degree $p^{n-1}(p-1)$. 
So  $\IQ_p(\zeta)\subset L$ is impossible for  $n$
sufficiently large.
Baker
and Petsche \cite{BP:globaldiscrepancy} proved the analog of
 Bombieri and Zannier's Theorem for elliptic curves.

Widmer \cite{Widmer:Northcott} has presented a sufficient condition
for a field to contain only finitely many elements of bounded height. 
This property is stronger than the Bogomolov property and is not shared
by $\IQ(\tors{E})$ or even $\ab{\IQ}$. 
Very recently, Amoroso,  David, and Zannier \cite{ADZ}  
proved a common generalization of  the first and third's result
\cite{AZ:unifrel} and some aspects of
 Bombieri and Zannier's Theorem.

Zograf \cite{zograf}
 and independently Abramovich \cite{abramovich99}
proved a lower bound for the gonality in a tower of 
classical  modular curves in characteristic zero.
Their lower  bound
is  linear in terms
 of the degree of the morphism to the base of the tower.
This corresponds precisely to the Bogomolov property transposed to
extensions of a fixed function field.
In more recent work Ellenberg, Hall, and Kowalski \cite{EHK}
deduced lower bounds for the gonality in more general  towers of
curves.
Poonen provided good evidence that 
  a linear lower bound for the gonality
also holds for  towers coming from modular curves in 
 positive characteristic  \cite{Poonen:gonality}.
In all these results, the  corresponding  function field extensions are
usually not abelian. It is remarkable that non-commutativity is an
obstacle on the number field side but is  necessary (though not
sufficient) 
on the function field side 
 to ensure that certain graphs
are almost expanders 
\cite{EHK}.


We now give an overview of our proof of Theorem \ref{thm:main}. 
For this let us suppose  that $E$ does not have complex
multiplication. 
Our argument  uses
the  decomposition of the height into local terms. 
Say $N\ge 1$ is an integer.
The basic idea is to use two  metric estimates, one
non-Archimedean and one Archimedean, 
in the number field $\IQ(E[N])$.
We will use both to derive a height lower bound on this field that is
independent of $N$.

The  non-Archimedean estimate is 
done at places above an auxiliary prime number
$p$. 
Elkies \cite{Elkies} proved that $E$ has  supersingular
reduction at infinitely many primes. 
It will
suffice to work with only one  $p$. However we must 
 arrange, among other things, that the   representation
 $\gal{\IQ(\tors{E})/\IQ}\rightarrow\aut E[p]$ is surjective.
By Serre's
 Theorem this is true for all but finitely many $p$.
At the moment,  the Theorem of Elkies is not known for elliptic curves
over a general number field.  So we restrict ourselves to 
elliptic curves 
defined over $\IQ$. 

The prime $p$ is fixed once and for all in terms of $E$ and
does not depend of $N$.
Our approach is based on
 studying the   representations
\begin{equation}
\label{eq:galoisrep}
  \gal{\IQ_p(E[\ell^n])/\IQ_p} \rightarrow \aut E[\ell^n]
\quad\text{for an integer}\quad n\ge 1
\end{equation}
as $\ell$ varies over the prime divisors of $N$, including
 $\ell=p$ if necessary. 

No ramification occurs when $\ell\not = p$.
In this case we will obtain an explicit  height lower bound
swiftly using the product formula in Lemma \ref{lem:primetoptorsion}. 
The crucial point is that supersingularity forces the square of the
Frobenius to act as a scalar on the reduction of $E$ modulo $p$.
A lift of 
this square
to characteristic zero is in the
center of Galois group of $\IQ(E[\ell^n])/\IQ$, a  fact
that makes up for the
lack of commutativity.

 Ramification occurs when $\ell=p$ and here lies
the main difficulty in proving Theorem \ref{thm:main}.
We will 
 describe  representations (\ref{eq:galoisrep}) using Lubin-Tate
 modules. 
Again we need 
that 
$E$ has good supersingular
reduction at $p$.
But we can no longer rely on  Frobenius and instead use Lubin-Tate theory
to find  a suitable replacement inside
a higher ramification group. In general 
this substitute does not lie in
the center of the Galois group. But its 
 centralizer turns out to be  sufficiently large for our purposes.

A dichotomy into an unramified and a ramified case already 
appeared in the original work  of Amoroso-Dvornicich  on
abelian extensions of $\IQ$.
But in the ramified case, our non-Archimedean estimate is
 significantly weaker
when compared to the unramified case.
It  cannot be used with the product formula 
to deduce 
 Theorem \ref{thm:main} directly. 
The reason is 
 described in greater detail in 
 the beginning of Section \ref{sec:equi}.
We  remedy this deficiency by treating
the Archimedean places more carefully. 
Thus  our second  estimate is Archimedean and 
 relies on Bilu's Equidistribution Theorem
 \cite{Bilu} for algebraic numbers of small height.

Bilu's Theorem has other ties to our problem as it  yields
 another proof of
Schinzel's Theorem that $\tr{\IQ}$ has the Bogomolov property.
Supersingular reduction was also used in Bombieri and Zannier's
work \cite{BZ:96} on the Bogomolov Conjecture for subvarieties of
abelian varieties.

The proof of Theorem \ref{thm:main_ec} follows along similar lines
as the proof of Theorem \ref{thm:main}. 
We  rely on a  decomposition of the N\'eron-Tate height into local
height functions. 
And we also split 
the non-Archimedean local estimates
up into an unramified and a ramified case.
In the elliptic setting, Szpiro, Ullmo, and Zhang's 
 Equidistribution Theorem  \cite{SUZ:equi}
  substitutes  Bilu's result. 
However, local terms in the N\'eron-Tate
height, unlike the local terms in the Weil height,  can take negative
values at  non-Archimedean places. 
So we will also need 
 a theorem of Chambert-Loir \cite{CL:mesures}
which yields non-Archimedean equidistribution at places of split
multiplicative reduction. 
Alternatively, 
  Baker and Petsche's \cite{BP:globaldiscrepancy} 
simultaneous approach to 
  Archimedean and non-Archimedean equidistribution 
can also be used.

Theorems \ref{thm:main} and its elliptic counterpart Theorem
\ref{thm:main_ec} have a common  reformulation in terms of the
split semi-abelian variety 
$S=\IG_m \times E$. 
A natural N\'eron-Tate height on $S(\IQbar)$
is  
given by
 $\hat h(\alpha,A) = \height{\alpha}+\hat h(A)$
for $\alpha\in \IG_m(\IQbar)$ and $A\in E(\IQbar)$.
Then
 $\hat h$ vanishes precisely on $\tors{S}$, the group of all 
torsion points of $S$.
As we have already seen, the Weil pairing implies  $\IQ(\tors{S}) =
\IQ(\tors{E})$. 
Our two 
previous theorems are repackaged in the following corollary whose
proof is immediate.

\begin{corollary}
Suppose $E$ is an elliptic curve defined over $\IQ$
and let $S = \IG_m\times E$.
 There exists
  $\epsilon > 0$ such that if $P\in S(\IQ(\tors{S}))$ is
 non-torsion, then 
$\hat h(P) \ge \epsilon$.
\end{corollary}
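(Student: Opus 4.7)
The plan is to reduce the corollary directly to Theorems \ref{thm:main} and \ref{thm:main_ec} by exploiting the product structure of $S$ and the fact that $\IQ(\tors{E})$ already captures all of $\IQ(\tors{S})$.

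First I would identify the field of definition of the torsion. Since $\tors{S} = \tors{\IG_m}\times\tors{E}$, the field $\IQ(\tors{S})$ is generated over $\IQ$ by all roots of unity together with the coordinates of the torsion points of $E$. But, as recalled in the introduction via the Weil pairing, $\IQ(E[N])$ already contains a primitive $N$-th root of unity for every $N\ge 1$, so $\IQ(\tors{\IG_m})\subset \IQ(\tors{E})$, and hence $\IQ(\tors{S}) = \IQ(\tors{E})$. This identification is the only step where something nontrivial must be recalled; after that the argument is purely formal.

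Next I would write a general $P\in S(\IQ(\tors{S}))$ as $P = (\alpha,A)$ with $\alpha \in \IG_m(\IQ(\tors{E}))$ and $A\in E(\IQ(\tors{E}))$, and use the additive decomposition $\hat h(P) = \height{\alpha}+\hat h(A)$, which is immediate from the definition of $\hat h$ on the product. The hypothesis that $P$ is non-torsion means that $(\alpha,A)$ is not in $\tors{\IG_m}\times\tors{E}$, so at least one of the following holds: either $\alpha$ is not a root of unity (equivalently, by Kronecker, $\height{\alpha}>0$), or $A$ is not a torsion point of $E$ (equivalently $\hat h(A)>0$).

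Finally I would apply the two main theorems. Theorem \ref{thm:main} provides an $\epsilon_1 = \epsilon_1(E)>0$ such that $\height{\alpha}\ge \epsilon_1$ whenever $\alpha \in \IQ(\tors{E})$ has positive height, and Theorem \ref{thm:main_ec} provides an $\epsilon_2 = \epsilon_2(E)>0$ such that $\hat h(A)\ge \epsilon_2$ whenever $A\in E(\IQ(\tors{E}))$ is non-torsion. Setting $\epsilon = \min\{\epsilon_1,\epsilon_2\}$ and using that both $\height{\cdot}$ and $\hat h$ are non-negative, at least one of the two summands in $\hat h(P)=\height{\alpha}+\hat h(A)$ is $\ge \epsilon$, so $\hat h(P)\ge \epsilon$. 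There is no genuine obstacle here: all the difficulty is absorbed into the two preceding theorems, and the only content of the corollary itself is the observation that the Weil pairing collapses $\IQ(\tors{S})$ onto $\IQ(\tors{E})$, so no separate Bogomolov-type input for $\IQ^{\rm cyc}$ is required.
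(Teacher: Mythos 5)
Your proposal is correct and is exactly the argument the paper has in mind: the paper notes immediately before the corollary that the Weil pairing gives $\IQ(\tors{S}) = \IQ(\tors{E})$ and that $\hat h(\alpha,A)=\height{\alpha}+\hat h(A)$, and then declares the proof "immediate" from Theorems \ref{thm:main} and \ref{thm:main_ec}. You have simply spelled out that immediacy, including the correct observation that non-torsion of $P$ forces at least one coordinate to be non-torsion and that both summands are non-negative.
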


Let us state some open questions and problems related to our results.

By Theorem \ref{thm:main} there  exists  $\epsilon>0$, depending on
$E$, such that for 
any non-zero $\alpha\in\IQ(\tors{E})$ that is not a
root of unity we have $\height{\alpha}\ge \epsilon$. 
It is a  natural problem to determine an explicit $\epsilon$ in terms of
the coefficients of a minimal Weierstrass equation of $E$. 
This problem is amenable to our method given explicit versions
of the theorems of Bilu, Elkies and Serre.  
But an effective version of the Theorem of Elkies is likely to  
  introduce quantities depending on $E$. 
On the other hand, the author was unable to find an $E$ and $\alpha$
such that $\height{\alpha}$ is positive but arbitrarily small. 
Can one choose $\epsilon$, implicit in Theorem
\ref{thm:main},  to be 
independent of $E$? A similar question can be raised in the context
of Theorem \ref{thm:main_ec}.

Do Theorems \ref{thm:main} and
\ref{thm:main_ec} hold with $\IQ(\tors{E})$ replaced by a finite
extension?  Say $\epsilon  > 0$. According to
 a conjecture of David, formulated for 
 abelian varieties defined over number fields, there should exist a constant
$c>0$ depending only on $E$ and $\epsilon$ with
\begin{equation}
\label{eq:reldobrowolski}
  \hat h(A) \ge \frac{c}{[\IQ(\tors{E})(A):\IQ(\tors{E})]^{1+\epsilon}}
\end{equation}
for all algebraic points $A$ of $E$ that are not torsion. 
This is a  so-called relative Dobrowolski-type inequality.
It is even expected to hold for $\epsilon=0$. 
Ratazzi \cite{Ratazzi:04} proved the generalization 
of (\ref{eq:reldobrowolski}) 
to elliptic curves with complex multiplication 
defined over a number field.
Proving inequality (\ref{eq:reldobrowolski})
for elliptic curves without complex multiplication is a longstanding
open problem, even with $\IQ(\tors{E})$  replaced by $\IQ$. 
Variants of such  estimates
 have interesting
applications to  unlikely intersections on
abelian varieties and algebraic tori
 \cite{CL:bourbaki}. 

Suppose $E'$ is a second elliptic curve defined over $\IQ$
and let $F = \IQ(\tors{E},\tors{E'})$. Then 
David's Conjecture  for 
 the abelian surface $E\times E'$ expects that
$\hat h(A)+\hat h(A')$ is bounded from below by a positive constant
if at least one among $A\in E(F),A'\in E'(F)$
 is not torsion. In a similar vein 
we  ask if the field
 $F$ satisfies the Bogomolov property.

The N\'eron-Tate height  plays an
important role in the proof of the Mordell-Weil Theorem. 
Indeed, the famous descent argument  relies on
the basic property that an elliptic curve contains  only finitely
 points 
defined over a fixed number field with bounded
 height. This finiteness property is stronger than what we proved
in  Theorem \ref{thm:main_ec} for the group $E(\IQ(\tors{E}))$. However, 
our result has an amusing consequence for the structure of this group.
 The following corollary uses a  
group theoretic result of Zorzitto \cite{Zorzitto}.

\begin{corollary}
\label{cor:mwgroup}
Suppose $E$ is an elliptic curve defined over $\IQ$. 
Then $E(\IQ(\tors{E}))/\tors{E}$ is a free abelian group of countable
infinite rank. In other words
\begin{equation*}
  E(\IQ(\tors{E})) \cong \tors{E} \oplus \bigoplus_{\IN}\IZ. 
\end{equation*}
\end{corollary}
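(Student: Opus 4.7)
The plan is to verify the hypotheses of the result of Zorzitto for the countable torsion-free abelian group $G := E(\IQ(\tors{E}))/\tors{E}$. Countability is clear since $\IQ(\tors{E})$ is a countable algebraic extension of $\IQ$, and torsion-freeness is immediate: if $n\cdot P$ lies in $\tors{E}$ for some $P\in E(\IQ(\tors{E}))$ and some $n\ge 1$, then $P$ itself is a torsion point.

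The heart of the argument is to show that every finite rank subgroup $H\subset G$ is finitely generated. Let $r$ be the rank of such an $H$. The N\'eron-Tate height $\hat h$ descends to a positive-definite quadratic form on $G\otimes_\IZ\IR$, and its restriction to $H\otimes_\IZ\IR$ makes the latter into an $r$-dimensional Euclidean space. Any non-zero element of $H$ is represented by a non-torsion point of $E(\IQ(\tors{E}))$, so Theorem \ref{thm:main_ec} forces its squared norm to be at least $\epsilon$. Consequently the image of $H$ in $H\otimes_\IZ\IR\cong\IR^r$ is a discrete subgroup, and hence a free abelian group of rank at most $r$. With this hypothesis in hand, Zorzitto's criterion then yields that $G$ is free abelian.

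It remains to identify the rank of $G$ as countably infinite. The rank is at most countable because $G$ itself is countable. For infiniteness, the Weil pairing gives $\IQ(\mu_\infty)\subset\IQ(\tors{E})$, so by Kronecker-Weber one has $\ab{\IQ}\subset\IQ(\tors{E})$. The theorem of Frey and Jarden asserts that $E(\ab{K})$ has infinite rank for any elliptic curve defined over a number field $K$; applying this with $K=\IQ$ shows that $E(\ab{\IQ})/\tors{E}$, and therefore $G$, has infinite rank.

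The main obstacle, and the place where Theorem \ref{thm:main_ec} enters decisively, is the discreteness step. Without a uniform positive lower bound on $\hat h$ for non-torsion points of $E(\IQ(\tors{E}))$, a finite rank subgroup of $G$ could a priori contain arbitrarily short non-zero vectors under the height norm and would thus fail to be finitely generated; the Bogomolov-type gap for $\hat h$ on $E(\IQ(\tors{E}))$ is precisely what rules this out, after which the proof reduces to linear algebra over $\IR$ and the cited group theoretic input.
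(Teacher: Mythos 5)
Your proof reaches the right conclusion, but it takes a genuinely different route from the paper and in doing so introduces a subtle gap and a misattribution.

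The paper's proof is a direct application of Zorzitto's theorem in the form in which it is actually stated: a \emph{countable} abelian group equipped with a \emph{discrete norm} (a norm bounded below on non-zero elements) is free. The paper simply checks that $\hat h^{1/2}$ is a norm on $E(\IQ(\tors{E}))/\tors{E}$ -- it is positive definite, homogeneous, and satisfies the triangle inequality by the parallelogram law -- and that Theorem \ref{thm:main_ec} makes it discrete. Zorzitto's theorem then applies as a black box. Your proof, by contrast, verifies \emph{Pontryagin's criterion}: a countable torsion-free abelian group is free if (and only if) every finite-rank subgroup is finitely generated. You attribute this criterion to Zorzitto, which is incorrect; Pontryagin's criterion is a classical result that Zorzitto's proof itself reduces to, so in effect you are re-deriving the content of Zorzitto's theorem rather than citing it. This is not wrong, but it does more work than necessary and mislabels the group-theoretic input.

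The one step that needs more care is the assertion that ``$\hat h$ descends to a positive-definite quadratic form on $G\otimes_\IZ\IR$.'' For a Mordell--Weil group $E(K)$ over a number field this is standard, but the proof goes through Northcott's finiteness theorem, which is unavailable over the infinite extension $\IQ(\tors{E})$; in fact for a general (non-finitely-generated, non-Northcott) group this can fail even when the form is strictly positive on all non-zero lattice elements. What saves your argument is precisely the Bogomolov-type gap of Theorem \ref{thm:main_ec}: if the extension of $\hat h$ to $H\otimes\IR\cong\IR^r$ had a non-trivial kernel $V$, then $H\cap V=\{0\}$ (else a non-zero element of $H$ would have $\hat h=0$), so projecting along $V$ embeds $H$ into $\IR^{r-\dim V}$ with the induced positive-definite form, where the gap $\hat h\ge\epsilon$ forces the image to be discrete of rank $\le r-\dim V<r$, contradicting $\operatorname{rank} H=r$. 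This argument should be made explicit; as written, you invoke Theorem \ref{thm:main_ec} one sentence too late, only to bound norms of individual elements, whereas it is needed already to justify positive definiteness and hence the discreteness of $H$ in $\IR^r$. With that repaired -- or, more simply, by quoting Zorzitto's discretely-normed-group theorem as the paper does -- the proof is complete, and your treatment of torsion-freeness, countability, and the Frey--Jarden input for infinite rank all agree with the paper.
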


Frey and Jarden \cite{FreyJarden} showed
 that   the smaller group $E(\ab{\IQ})$ already has infinite
 rank. That is, it contains an infinite sequence of elements that do not
 satisfy a non-trivial linear relation involving finitely many integer
 coefficients.
Our contribution  is in showing that $E(\IQ(\tors{E})) / \tors{E}$ is a free
abelian group.

We briefly discuss how this paper is organized. 
Section \ref{sec:prelim}  deals
mainly  with
issues of notation. In Section \ref{sec:supersingular} we review
the implications of Lubin-Tate theory for 
 the Galois representation (\ref{eq:galoisrep}) when $\ell=p$.
The  local non-Archimedean estimates used in the proof of Theorem \ref{thm:main} are derived in Section \ref{sec:localmetric}.
 In Section \ref{sec:global} we obtain a
 preliminary height lower
 bound in direction of Theorem \ref{thm:main}. It
is  then refined in Section \ref{sec:descending} using a Kummerian
 descent argument. 
Bilu's Equidistribution Theorem then 
completes the  
 proof that $\IQ(\tors{E})$ satisfies the Bogomolov property
  in Section
 \ref{sec:equi}. In Section \ref{sec:thm2} we
 turn our attention to lower bounds for the N\'eron-Tate height. The first half of
 this section contains a review of the N\'eron-Tate height while the
 second half finalizes the proof of Theorem \ref{thm:main_ec}
and contains the proof of Corollary \ref{cor:mwgroup}. 

This work was initiated at the 
 Institute for Advanced Studies in Princeton.
 I thank Enrico Bombieri for the invitation and the
 Institute for its hospitality and financial support. 
I am in dept to Jeff Vaaler and Martin Widmer for 
pointing out Zorzitto's result and its relevance to small
height.
It is my pleasure to thank  Jordan Ellenberg,
 Florian Herzig, and Lars K\"uhne for
discussions, comments, and references. 
I am  grateful to 
 Laurent Berger, Clemens Fuchs, Andrew Kresch, and 
Gisbert W\"ustholz for organizing a
 Workshop on $p$-adic Periods in Alpbach, Austria in  Summer 2010.
Many things I learned there proved
influential for the current work. 
Finally, I would like to thank the referees for remarks that helped 
 improve the exposition and simplify certain arguments, e.g. Lemma
 \ref{lem:qpower} and its elliptic  counterpart Lemma \ref{lem:kummer_ec}. 
This research was partially supported by
SNSF project
number 124737.

\section{Preliminaries on Heights and  Local Fields}
\label{sec:prelim}

The group of units of a ring $R$ is denoted by $R^\times$. The natural
numbers $\IN$ are $\{1,2,3,\ldots\}$.

\subsection{Heights}
\label{sec:height}

Let $K$ be a number field. A place $v$ of $K$ is an absolute
value  $|\cdot|_v:K\rightarrow [0,\infty)$ 
whose restriction $w$ to $\IQ$ is either the standard complex
absolute value $w=\infty$ or $w=p$, the
 $p$-adic absolute value for a prime $p$.
In the former case we write $v|\infty$ and call $v$ infinite or Archimedean. 
In the latter case we write $v|p$ or $v\nmid\infty$ and call $v$
finite
or non-Archimedean.
A place is  finite if and only if it
 satisfies the ultrametric triangle
inequality.
The completion of $K$ with respect to $v$ is denoted with $K_v$.
We use the same symbol $|\cdot|_v$ for the absolute value on $K_v$.
The set of finite places can be identified naturally with the set of
non-zero prime ideals of the ring of integers of $K$. 
The infinite places are in bijection with field embeddings
$K\rightarrow \IC$ up to complex conjugation.
We define the local degree of $v$ as $d_v=[K_v:\IQ_{w}]$. They satisfy
\begin{equation}
\label{eq:localdegrees}
 \sum_{v|w}d_v= [K:\IQ],
\end{equation}
cf. Chapter 1.3 \cite{BG}.


The absolute logarithmic Weil height, or short height, 
of $\alpha\in K$ is defined to be
\begin{equation}
\label{eq:heightdef}
\height{\alpha}=  \frac{1}{[K:\IQ]}\sum_{v} d_v \log \max\{1,|\alpha|_v\}
\end{equation}
where $v$ runs over all places of $K$.

It is well-known that the height does not change if $K$ is replaced by
another  number field containing $\alpha$. Hence we have a
well-defined  
function $\heightS$ with domain any algebraic closure of
$\IQ$ taking non-negative real values. 
Kronecker's Theorem states that $\height{\alpha}$ vanishes precisely
when $\alpha=0$ or $\alpha$ is a root of unity. 
For these two statements we refer to Chapter 1.5 \cite{BG}.

We list some properties of our height which we will refer to as basic
height properties in the following.
Our definition (\ref{eq:heightdef}) implies 
\begin{equation}
\label{eq:basicheight}
  \height{\alpha\beta}\le \height{\alpha}+\height{\beta}
\quad\text{and}\quad \height{\alpha^k} = k\height{\alpha}
\end{equation}
if $\beta\in K$ and $k\in\IN$. 
The so-called product formula
\begin{equation*}
  \sum_{v} d_v \log |\alpha|_v = 0
\end{equation*}
holds  if $\alpha\not = 0$; it is
proved in Chapter 1.4 \cite{BG}.  One consequence is $\height{\alpha} =
\height{\alpha^{-1}}$. Combining this equality with
(\ref{eq:basicheight}) we deduce
\begin{equation*}
  \height{\alpha^k} = |k|\height{\alpha}
\quad\text{if}\quad\alpha\not=0 \quad \text{and}\quad k\in\IZ.
\end{equation*}
If $\zeta\in K$ is a root of unity,
then  $|\zeta|_v =1$ for all places $v$ of $K$. Hence
$\height{\zeta}=0$ and more generally
\begin{equation*}
  \height{\zeta\alpha}=\height{\alpha}. 
\end{equation*}
Finally, if $\alpha'$ is a conjugate of $\alpha$ over $\IQ$,  then
$\height{\alpha'}=\height{\alpha}$. 

\subsection{Local Fields}
\label{ssec:local}
If $K$ is a valued field, then $\O_K$ denotes its ring of integers
and $k_K$  its residue field.
Say $K/F$ is a finite Galois extension of discretely valued fields. We
shall assume that the valuation on $F$ is non-trivial.
Let $w : K \rightarrow \IZ \cup \{+\infty\}$ denote the surjective valuation. 
If $i \ge -1$ then
\begin{equation*}
  \ram{K/F}{i} = 
\{ \sigma\in \gal{K/F};\,\, 
w(\sigma(a) - a) \ge i + 1 \text{ for all } 
a \in \O_K \}
\end{equation*}
is the $i$-th higher ramification group of $K/F$.
We get a  filtration
\begin{equation*}
  \gal{K/F} = \ram{K/F}{-1} \supset 
\ram{K/F}{0} \supset  \ram{K/F}{1}\supset \cdots
\end{equation*}
where $\ram{K/F}{0}$ is the inertia group of $K/F$.

Let $p$ be a prime and let
 $\IQ_p$ be the field of $p$-adic numbers with 
absolute value $|\cdot|_p$. 
The prime $p$ will be fixed throughout the proof of our two 
theorems.
We will work with a
 fixed  algebraic closure $\IQpbar$ of  $\IQp$
and extend $|\cdot|_p$ to $\IQpbar$. 
All
algebraic extensions of $\IQ_p$ will  be  subfields of $\IQpbar$.

If $f\in\IN$ we call
\begin{equation*}
  \IQ_{p^f} \text{ the unique unramified extension of degree $f$ of $\IQ_p$ inside $\IQpbar$.} 
\end{equation*}
The integers in $\IQ_{p^f}$ will also be denoted by $\IZ_{p^f}$.
The union of all $\IQ_{p^f}$ is $\IQpunr$, the maximal unramified
extension of $\IQ_p$ inside $\IQpbar$. 
We let
$\varphi_p \in \gal{\IQpunr/\IQ_p}$
denote the lift  of the Frobenius automorphism.
We write $\varphi_{p^f} = \varphi_p^f$.

 For definiteness we
let $\IQbar$ denote the algebraic closure of $\IQ$ in $\IQpbar$.
We will  consider number fields to be subfields of
$\IQbar$ and hence of $\IQpbar$. 
Say $K$ is a finite  extension of $\IQ$. Then $|\cdot|_p$ restricts to
a finite place of $v$ of $K$. The completion
 $K_v$ can be taken to be the topological closure of $K$ inside  $\IQpbar$.
So if $K$ is a Galois extension of $\IQ$ one can  identify
$\gal{K_v/\IQ_p}$  with a subgroup of $\gal{K/\IQ}$ by restricting.

If $n\ge 0$ then
$\mu_{p^n}\subset\IQbar$ 
denotes the group of roots of unity
 with order dividing $p^n$.
Let $\mu_{p^\infty} \subset \IQbar$ denote the group of
roots of unity whose orders are a power of $p$.
Hence $\mu_{p^\infty}$ is the union of all $\mu_{p^n}$. 
We write $\mu_{\infty}$ for all roots of unity in $\IQbar$.

We collect some basic, but useful, facts on finite extensions of the $p$-adics.

\begin{lemma}
\label{lem:lift}
Let $F\subset\IQpbar$ be a finite extension of $\IQ_{p}$.
  Let $K,L \subset \IQpbar$ be
 finite Galois extensions of $F$ with
$K/F$ totally ramified and $L/F$ unramified.
  \begin{enumerate}
\item[(i)] We have $K\cap L = F$ and 
  \begin{equation*}
    \gal{KL/F}\ni \sigma \mapsto (\sigma|_K,\sigma|_L)
\in \gal{K/F}\times\gal{L/F}
  \end{equation*}
is an isomorphism of groups.
  \item [(ii)]
The extension
$KL/K$ is unramified of degree $[L:F]$, and the extension $KL/L$ is
totally ramified of degree $[K:F]$.
\item[(iii)]
Say $i\ge -1$.
If $\sigma \in \gal{KL/L}\cap G_i(KL/F)$ then $\sigma|_K\in
\ram{K/F}{i}$. Moreover, 
the induced map
$\gal{KL/L}\cap G_i(KL/F)\rightarrow \ram{K/F}{i}$ is an isomorphism
of groups.
  \end{enumerate}
\end{lemma}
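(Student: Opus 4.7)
The plan is to establish the three parts in sequence, each building on the previous. For part (i), since $K \cap L$ is a subextension of both $K/F$ (totally ramified) and $L/F$ (unramified), it is simultaneously totally ramified and unramified over $F$, hence equals $F$. Both $K$ and $L$ being Galois over $F$ implies $KL/F$ is Galois, and the restriction map to $\gal{K/F} \times \gal{L/F}$ is injective because any $\sigma$ fixing both $K$ and $L$ pointwise fixes $KL$. Since $[KL:F] = [K:F][L:F]/[K \cap L : F] = [K:F][L:F]$, the map is a bijection.

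For part (ii), I would invoke multiplicativity of the ramification index $e$ and residue degree $f$ in towers. One has $e(K/F) = [K:F]$, $f(K/F) = 1$, $e(L/F) = 1$, $f(L/F) = [L:F]$, and $e(KL/F) \ge e(K/F)$, $f(KL/F) \ge f(L/F)$, combined with $e(KL/F) f(KL/F) \le [KL:F] = [K:F][L:F]$. Equality must hold throughout, giving $e(KL/F) = [K:F]$ and $f(KL/F) = [L:F]$. Dividing by the factors corresponding to $K$ and $L$ yields the two tower claims.

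For part (iii), under the isomorphism in (i) the subgroup $\gal{KL/L}$ corresponds to $\gal{K/F} \times \{1\}$, so restriction to $K$ is already a group isomorphism $\gal{KL/L} \to \gal{K/F}$; only the identification of the higher ramification piece needs proof. Let $w_{KL}$ and $w_K$ denote the normalized valuations. Since $KL/K$ is unramified by (ii), $w_{KL}$ restricts to $w_K$ on $K$. Because $K/F$ is Galois, any $\sigma \in \gal{KL/F}$ satisfies $\sigma(\O_K) = \O_K$, so for $a \in \O_K$ we get $w_{KL}(\sigma(a)-a) = w_K(\sigma|_K(a) - a)$; hence $\sigma \in G_i(KL/F)$ forces $\sigma|_K \in \ram{K/F}{i}$.

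The main obstacle is the reverse inclusion. Given $\tau \in \ram{K/F}{i}$, let $\sigma \in \gal{KL/L}$ be its unique lift from part (i); I must verify $\sigma \in G_i(KL/F)$. Because $L/F$ is unramified with separable residue extension of finite fields, $\O_L = \O_F[\beta]$ for a suitable $\beta$, and the minimal polynomial of $\beta$ over $F$ has separable reduction modulo the maximal ideal, which by Hensel's lemma gives $\O_{KL} = \O_K[\beta]$. Any $a \in \O_{KL}$ then has the form $a = \sum_j c_j \beta^j$ with $c_j \in \O_K$, and $\sigma(\beta) = \beta$ since $\sigma$ fixes $L$, so
\begin{equation*}
\sigma(a) - a = \sum_j \bigl(\sigma(c_j) - c_j\bigr)\beta^j.
\end{equation*}
Each term satisfies $w_{KL}(\sigma(c_j) - c_j) = w_K(\tau(c_j) - c_j) \ge i+1$, and $w_{KL}(\beta^j) \ge 0$ because $\beta$ is integral, so the ultrametric inequality gives $w_{KL}(\sigma(a) - a) \ge i+1$ for every $a \in \O_{KL}$. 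This shows $\sigma \in G_i(KL/F)$, completing the isomorphism statement in (iii).
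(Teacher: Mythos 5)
Your proof is correct and follows essentially the same strategy as the paper; parts (i) and (ii) are the standard arguments, and the key idea in (iii) — expressing an arbitrary integer of $\O_{KL}$ so that the ``unramified'' pieces are fixed by $\sigma$ and the remaining coefficients transform as under $\tau \in \ram{K/F}{i}$ — is identical. The only cosmetic difference is that you realize this via the monogenic presentation $\O_{KL} = \O_K[\beta]$ (with $\O_L = \O_F[\beta]$ and $\sigma(\beta)=\beta$), whereas the paper uses the $\O_F$-basis $\{\pi^l x_m\}$ from Proposition II.6.8 of Neukirch; both yield the same ultrametric estimate.
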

\begin{proof}
The extension $(K\cap L)/F$ is totally ramified and unramified.
 A totally ramified and unramified extension of local
fields is trivial. So $K\cap L=F$.
The second claim of part (i) is now a basic result of Galois theory.

To prove part (ii) we can use part (i) to conclude
that  $KL/L$ is Galois with group isomorphic to 
$\gal{K/F}$. In particular, $KL/L$ is an
extension of degree $e=[K:F]$.
By a similar argument, $KL/K$ is of degree $f=[L:F]$. 
We remark that $KL/L$ and $KL/F$ have the same ramification index $e'$
since $L/F$ is unramified. 
In particular, $e'\ge e$. On the other hand, $e' \le [KL:L]=e$. So
$e'=e$ and thus $KL/L$ is totally ramified. We also conclude that
$KL/K$ is unramified. So part (ii) holds.

Let $\pi \in \O_K$ be a uniformizer for  $K$.
Moreover, let $x_1,\dots,x_f \in \O_L$ be lifts of
elements of a $k_{F}$-basis of  $k_L$.
Let us abbreviate $\O = \O_{KL}$. 

Before proving (iii) we first need to establish
\begin{equation}
  \label{eq:basisclaim}
  \O = \sum_{l=0}^{e-1}\sum_{m=1}^f \pi^l x_m \O_{F}.
\end{equation}
This equality follows by the argument given in the proof
of Proposition II.6.8 \cite{Neukirch}.




We  use $w$ to denote the unique extension of the surjective valuation
$F\rightarrow\IZ\cup\{+\infty\}$
 to a surjective valuation $KL\rightarrow e^{-1}\IZ\cup\{+\infty\}$.

Suppose $\sigma\in \gal{KL/L}\cap\ram{KL/F}{i}$. Then 
$e w(\sigma(a)-a)\ge i+1$ for all $a\in \O$ because $KL/F$ has
ramification index $e$. Because $K/F$ has the same ramification index
we get $\sigma|_K\in \ram{K/F}{i}$. This shows the first claim in part
(iii).

The homomorphism  in (iii) is injective by part (i). It
remains to show that any 
$\sigma' \in \ram{K/F}{i}$ lies in its image. By (i) we can find a
unique lift $\sigma \in \gal{KL/L}$ with $\sigma|_K = \sigma'$. 
It now suffices to show
 $\sigma\in \ram{KL/F}{i}$. 

Suppose $a\in \O$. By (\ref{eq:basisclaim})
we may write
$a = \sum_{l,m}  \pi^l x_m a_{lm}$ for some $a_{lm}\in \O_{F}$.
We have $\sigma(a_{lm}) = a_{lm}$ and $\sigma(x_m)=x_m$ because these
elements lie in $L$. 
We remark $e w(\sigma(\pi^l)-\pi^l) = e w(\sigma'(\pi^l)-\pi^l) \ge
i+1$ since $\pi \in \O_K$.
The ultrametric triangle inequality  gives
\begin{alignat*}1
  ew(\sigma(a) - a) &= ew\left(\sum_{l,m}\sigma(\pi^l x_m a_{lm}) - 
\pi^l x_m a_{lm}\right)
= ew\left(\sum_{l,m}(\sigma(\pi^l) - \pi^l)x_m a_{lm}\right) \\
&\ge \min_{l,m} ew((\sigma(\pi^l)-\pi^l)x_m a_{lm}) \ge i+1.
\end{alignat*}
This yields $\sigma \in \ram{KL/F}{i}$, as desired.
\end{proof}


\section{Supersingular Reduction and Lubin-Tate Theory}
\label{sec:supersingular}

Let $E$ be any elliptic curve defined over a field $K$.
If $N\in\IN$ then 
 $[N]$ stands for the multiplication-by-$N$ endomorphism of
$E$.
The group of points of $E$ of order dividing $N$ that 
are defined over an algebraic closure of $K$ is denoted with $E[N]$.
 If $\ell$ is a prime,
the $\ell$-adic Tate module $\T{E}{\ell}$ of $E$ is the inverse limit over $E[\ell^n]$ as
$n$ runs over the positive integers.
If the 
characteristic of the base field is  different from $\ell$
then $\T{E}{\ell}$ 
 is a torsion free $\IZ_\ell$-module of rank $2$.

Throughout this section we work with the following objects.
Let $p$ be a prime number with $p \ge 5$ and set $q=p^2$.
Suppose $E$ is an elliptic curve defined  over $\IQ_{q}$ 
presented  by a minimal short Weierstrass equation
$y^2 = x^3 + ax + b$ with $a,b\in \IZ_{q}$.
We assume that $E$ has good 
 supersingular
reduction $\widetilde E$. We remark that $\widetilde E$ is
 an elliptic curve defined over $\IF_{q}$. 
For technical reasons we shall suppose that $\widetilde j\in\IF_q$, the $j$-invariant of
$\widetilde E$, is not among $0$ or $1728$. 

Say $f\in\IN$. Because $E$ is fixed we ease notation and
\begin{equation}
\label{eq:easenotation1}
\text{use the symbol $\IQ_{p^f}(N)$ to denote the subfield 
$\IQ_{p^f}(E[N])$ of $\IQpbar$}  
\end{equation}
 generated by the subgroup 
points of $E(\IQpbar)$ whose order divides $N$. 

We begin this section by studying Galois theoretic properties of
torsion points of $E$ of order $M\in \IN$ coprime to $p$.
The first lemma is a basic result from the theory of elliptic curves
of local fields. It makes no use of the fact that $E$ has
supersingular reduction. 
\begin{lemma}
  \label{lem:unramified}
The extension  $\IQ_q(M)/\IQ_q$ is unramified. 
\end{lemma}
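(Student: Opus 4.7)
The plan is to apply the standard Néron--Ogg--Shafarevich style argument: since $E$ has good reduction over $\IQ_q$ and $M$ is coprime to the residue characteristic $p$, the reduction map will be injective on $E[M]$, and this together with the fact that inertia acts trivially on the residue field will force inertia to act trivially on $E[M]$. Since $\IQ_q(M)$ is by definition the field generated over $\IQ_q$ by $E[M]$, the inertia subgroup of $\gal{\IQ_q(M)/\IQ_q}$ must then be trivial, which is exactly unramifiedness.

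More concretely, I would proceed as follows. First, I would fix the reduction morphism $E(\IQpbar) \to \widetilde E(\overline\IF_q)$ coming from the Néron model (or equivalently the minimal Weierstrass equation, which has good reduction by hypothesis). Its kernel is the formal group $\widehat E(\mathfrak m)$, where $\mathfrak m$ is the maximal ideal of $\O_{\IQpbar}$. Because $M$ is invertible in $\O_{\IQpbar}/p$, multiplication by $M$ is an automorphism of the formal group, so $\widehat E(\mathfrak m)[M] = 0$. Consequently the restriction of reduction to $E[M]$ is an injection into $\widetilde E(\overline\IF_q)$.

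Next, the extension $\IQ_q(M)/\IQ_q$ is Galois; let $v$ denote the unique extension of the $p$-adic valuation to $\IQ_q(M)$ and write $I = \ram{\IQ_q(M)/\IQ_q}{0}$ for the inertia group. By definition, every $\sigma \in I$ fixes the residue field of $\IQ_q(M)$ elementwise. Since the reduction map is equivariant for the Galois action and is injective on $E[M]$, any $\sigma \in I$ must act trivially on $E[M]$. But $\IQ_q(M)$ is generated over $\IQ_q$ by the coordinates of the points of $E[M]$, so $\sigma$ acts trivially on $\IQ_q(M)$, i.e.\ $\sigma = 1$. Hence $I$ is trivial and the extension is unramified.

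The main (non-)obstacle here is really just organizing the formal-group input: one has to make sure the kernel of reduction is torsion-free away from $p$, for which the minimal short Weierstrass equation with good reduction assumed in the setup is precisely what is needed. Note that supersingularity plays no role in this lemma; it will be used only later when $\ell = p$. The assumptions that $p \ge 5$ and that $\widetilde j \notin \{0, 1728\}$ also play no role at this stage and will enter in subsequent lemmas.
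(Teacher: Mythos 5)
Your proposal is correct and matches the argument the paper defers to by citing Chapter VII of Silverman's \emph{The Arithmetic of Elliptic Curves}: good reduction plus $\gcd(M,p)=1$ makes reduction injective on $E[M]$, so inertia (which acts trivially on the residue field) acts trivially on $E[M]$ and hence on $\IQ_q(M)$. The paper gives no further details, so there is nothing to compare beyond noting that you have supplied the standard proof behind the citation.
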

\begin{proof}
See Chapter VII \cite{Silverman:AEC}.  
\end{proof}

The previous lemma can be rephrased by stating
$\IQ_q(M)\subset \IQpunr$. 
Let $\ell$ be a prime with $\ell\not=p$. 
The  Galois group  $\gal{\IQpunr/\IQ_q}$ acts on the group of torsion
points of $E$ whose order is a power of $\ell$. 
We obtain a representation
\begin{equation*}
  \rho_\ell : \gal{\IQpunr / \IQ_{q}} \rightarrow \autc{\IZ_\ell}{\T{E}{\ell}}.
\end{equation*}

Reducing modulo $p$ induces an injective
$\IZ_\ell$-module homomorphism
$\T{E}{\ell} \rightarrow
\T{\widetilde E}{\ell}$, cf. Chapter VII \cite{Silverman:AEC}.
After extending scalars this yields
an isomorphism
\begin{equation*}
  \T{E}{\ell}\otimes_{\IZ_\ell}\IQ_\ell \rightarrow \T{\widetilde
E}{\ell}\otimes_{\IZ_\ell}\IQ_\ell
\end{equation*}
of $\IQ_\ell$-vector spaces.

Recall that $\varphi_q \in \gal{\IQpunr/\IQ_q}$ is the lift of 
Frobenius squared. We let $\widetilde \varphi_q$ denote the $q$-Frobenius
endomorphism of $\widetilde E$. 
Then the characteristic polynomial 
 of $\rho_\ell(\varphi_q)$ considered as an automorphism of $\T{E}{\ell}$
  equals the characteristic polynomial
 of the action of $\widetilde\varphi_q$ on 
 $\T{\widetilde E}{\ell}$.
So the determinant of $\rho_\ell(\varphi_q)$  is the  degree of
$\widetilde\varphi_q$ and hence equal to $q$. 
By the Weil Conjectures for elliptic curves defined over finite fields, 
the trace of $\rho_\ell(\varphi_q)$ is an integer $a_q$  which does not
depend on $\ell$. It
  satisfies $|a_q|\le 2 \sqrt{q}=2p$ by Hasse's Theorem. 

In the next lemma  we  use 
  supersingularity for the first time.

\begin{lemma}
\label{lem:frobscalar}
We have  $a_q=\pm 2p$.
Moreover, if $\ell$ is a prime with $\ell\not=p$
then $\widetilde{\varphi_q} = [a_q/2]$ and
 $\rho_\ell(\varphi_q)=a_q/2$.
\end{lemma}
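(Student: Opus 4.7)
The plan is to prove the endomorphism-level identity $\widetilde\varphi_q = [a_q/2]$ in ${\rm End}(\widetilde E)$; both $a_q=\pm 2p$ and $\rho_\ell(\varphi_q) = a_q/2$ then follow immediately. Indeed, if $\widetilde\varphi_q = [n]$ for some integer $n$, then $a_q = 2n$ while $n^2 = \deg\widetilde\varphi_q = q = p^2$, so $n=\pm p$ and $a_q=\pm 2p$. For $\ell\ne p$ the reduction map $\T{E}{\ell}\to\T{\widetilde E}{\ell}$ is a $\gal{\IQpunr/\IQ_q}$-equivariant isomorphism of free $\IZ_\ell$-modules of rank $2$, so $\rho_\ell(\varphi_q)$ acts on $\T{E}{\ell}$ through the endomorphism $\widetilde\varphi_q=[n]$, i.e., as the scalar $n=a_q/2$.

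To prove $\widetilde\varphi_q\in\IZ$ I would first cut down the possibilities for $a_q$. Supersingularity of $\widetilde E$ means that both roots $\alpha,\bar\alpha$ of the characteristic polynomial $T^2-a_qT+q$, viewed in any embedding into $\IQpbar$, have $p$-adic valuation exactly $v_p(q)/2 = 1$ (a pure slope-$1/2$ Newton polygon). Therefore $v_p(a_q) = v_p(\alpha+\bar\alpha)\ge 1$, i.e.\ $p\mid a_q$, and together with Hasse's inequality $|a_q|\le 2\sqrt{q}=2p$ noted before this leaves only the five candidates $a_q\in\{0,\pm p,\pm 2p\}$.

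The four intermediate values are excluded using the hypothesis $\widetilde j\notin\{0,1728\}$. If $a_q\in\{0,\pm p\}$ then $T^2-a_qT+q$ is irreducible over $\IQ$, so $\widetilde\varphi_q$ generates an imaginary quadratic subfield of ${\rm End}^0(\widetilde E)$: namely $\IQ(\sqrt{-1})$ when $a_q=0$ and $\IQ(\sqrt{-3})$ when $a_q=\pm p$. According to Waterhouse's classification of supersingular isogeny classes of elliptic curves over $\IF_{p^2}$ (for $p\ge 5$), such a $q$-Frobenius can only arise on a supersingular elliptic curve with $\widetilde j=1728$ in the first case and $\widetilde j=0$ in the second, both contradicting our standing assumption.

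It remains to upgrade $a_q=\pm 2p$ to $\widetilde\varphi_q=[\pm p]$. In this case the characteristic polynomial becomes $(T\mp p)^2$, so Cayley--Hamilton gives $(\widetilde\varphi_q\mp[p])^2 = 0$ in ${\rm End}(\widetilde E)$. Since $\widetilde E$ is supersingular, ${\rm End}(\widetilde E)$ sits inside the quaternion algebra over $\IQ$ ramified at $p$ and $\infty$, a division algebra with no nonzero nilpotents. Hence $\widetilde\varphi_q=[\pm p]=[a_q/2]$, completing the proof. The main obstacle is the exclusion step: verifying that the candidate values $a_q\in\{0,\pm p\}$ force $\widetilde j\in\{0,1728\}$. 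This rests on the nontrivial classification of supersingular isogeny classes over $\IF_{p^2}$ and is precisely where the assumption $\widetilde j\notin\{0,1728\}$ is used decisively; everything else is basic Weil-conjecture arithmetic and the algebra of quaternion orders.
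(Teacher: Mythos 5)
Your overall plan matches the paper's quite closely: show $p\mid a_q$, reduce to $a_q\in\{0,\pm p,\pm 2p\}$ by Hasse, exclude $0$ and $\pm p$ using $\widetilde j\notin\{0,1728\}$, then use Cayley--Hamilton plus the fact that $\mathrm{End}(\widetilde E)$ has no nonzero nilpotents to conclude $\widetilde\varphi_q=[a_q/2]$, and finally transport this through the reduction map on Tate modules. Two details where you legitimately differ from the paper: you deduce $p\mid a_q$ from the pure slope-$1/2$ Newton polygon characterization of supersingularity, whereas the paper argues from $\widetilde\varphi_q^m=[p^{m'}]$ (Husemoeller 13.6.3); and you invoke the nilpotent-free quaternion algebra where the paper simply says the endomorphism ring has no zero divisors. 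Both of these are fine and essentially equivalent in strength.

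The genuine gap is in the exclusion step. You write that, by ``Waterhouse's classification of supersingular isogeny classes,'' a supersingular $E/\IF_{p^2}$ with $a_q=0$ must have $\widetilde j=1728$, and one with $a_q=\pm p$ must have $\widetilde j=0$. Waterhouse's classification (in the Honda--Tate sense) tells you \emph{which} traces $a_q$ occur and, in Theorem 4.2, which orders in $\IQ(\pi)$ arise as $\mathrm{End}(E)$; it does not directly say anything about the $j$-invariant of the curves in a given isogeny class. Knowing only that $\widetilde\varphi_q$ generates $\IQ(\sqrt{-1})$ or $\IQ(\sqrt{-3})$ inside $\mathrm{End}^0(\widetilde E)$ is not enough: the subring $\IZ[\widetilde\varphi_q]$ is then $\IZ[p\sqrt{-1}]$ (resp.\ $\IZ[(\pm p+p\sqrt{-3})/2]$), whose only units are $\pm 1$, so you have not exhibited any extra automorphism of $\widetilde E$ and hence no constraint on $\widetilde j$. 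What actually produces the automorphism is the purely inseparable factorization: $[p]$ and $\widetilde\varphi_q$ are both purely inseparable of degree $q$ on $\widetilde E$, so $\widetilde\varphi_q=u\circ[p]$ for some $u\in\mathrm{Aut}(\widetilde E)$ (Husemoeller 13.5.4). Substituting into $\widetilde\varphi_q^2-[a_q]\widetilde\varphi_q+[q]=0$ and dividing by $[p^2]$ gives $u^2-[\epsilon]u+1=0$ with $a_q=\epsilon p$, so for $\epsilon=0$ the automorphism $u$ has order $4$ (forcing $\widetilde j=1728$) and for $\epsilon=\pm1$ it has order $3$ or $6$ (forcing $\widetilde j=0$). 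This is precisely the missing link; you should either make this factorization explicit or replace the vague reference to ``Waterhouse's classification'' with a precise citation to a result that genuinely asserts the $j$-invariant constraint.
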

\begin{proof}
Because $\widetilde E$ is assumed to be supersingular we have
$p|a_{q}$. We give a short proof of this well-known fact.
Theorem 13.6.3
\cite{Husemoeller} implies
$\widetilde{\varphi}_{q}^{m} = [p^{m'}]$ on $\widetilde E$ 
  for certain positive integers $m$ and $m'$.
The degree  of $\widetilde{\varphi}_q$ is $q=p^2$ and that of $[p]$
is also $p^2$. Hence $m=m'$ and  
  $\lambda_1^m =\lambda_2^m= p^m$ where $\lambda_{1,2}$ are the eigenvalues of
the action of $\widetilde{\varphi}_q$ on $\T{\widetilde E}{\ell}$. 
Therefore, $\lambda_{1,2}/p$ are
algebraic integers. But $a_{q}/p = (\lambda_1+\lambda_2)/p$ is rational,
so $p | a_q$.

We have already seen  $|a_q|\le 2p$. So 
we may write $a_q=\epsilon p$ with 
 $\epsilon \in \{0,\pm 1,\pm 2\}$.
To show the first claim we will  need to 
 eliminate the cases $\epsilon=0,\pm 1$.

The Theorem of Cayley-Hamilton implies
that $\widetilde{\varphi}_q^2 - [a_q]\circ \widetilde{\varphi}_q+[q]$,
taken as an endomorphism of  $\T{\widetilde E}{\ell}$, vanishes.
 Hence
as an endomorphism of $\widetilde E$ we have
\begin{equation}
  \label{eq:CHthm}
\widetilde{\varphi}_q^2 - [a_q]\circ \widetilde{\varphi}_q+[q]=0.
\end{equation}

Suppose we have $|\epsilon|\le 1$. Since $[p]:\widetilde E\rightarrow
\widetilde E$ is purely inseparable of degree $q$ it follows that
$u\circ [p]=\widetilde{\varphi}_q$ with $u$ an automorphism of $\widetilde E$,
cf. Proposition 13.5.4 \cite{Husemoeller}. 
Now  $\widetilde{\varphi}_q^{2} - [a_{q}]\circ \widetilde{\varphi}_q +  [q]=0$ implies 
$u^2 - [\epsilon]\circ u + 1=0$. 
If for example $\epsilon=0$, then $u$ is an automorphism of order
$4$.
This is incompatible with $\widetilde j\not=1728$ by Theorem III.10.1
  \cite{Silverman:AEC}.
If $\epsilon = \pm 1$ then $u$ has order $6$ or $3$.
On consulting the same reference we arrive at a contradiction because
 $\widetilde j\not = 0$.

Hence $a_q =\pm 2p$ and the first claim holds.

We may thus  rewrite (\ref{eq:CHthm}) as 
 $(\widetilde{\varphi}_q - [a_q/2])^2=0$. The
endomorphism ring of $\widetilde E$ has no zero divisors, so
 $\widetilde{\varphi}_q = [a_q/2]$.
This implies $\rho_\ell(\varphi_q)=a_q/2$ since the reduction
homomorphism is injective.
\end{proof}


We come to the Galois theoretic analysis of torsion points on $E$ 
with order a power of $p$.
Our main tool is the theory of Lubin-Tate modules and its relation
to 
local class field theory. 

\begin{lemma}
\label{lem:LT}
Say $n\in\IN$. 
\begin{enumerate}
\item [(i)] The extension $\IQ_q(p^n) / \IQ_{q}$ is  totally ramified
and abelian
of  degree $(q-1) q^{n-1}$.
Moreover, 
\begin{equation}
\label{eq:Galois2}
 \gal{\IQ_q(p^n)/\IQ_q(p^{n-1})}\cong
 (\IZ/p\IZ)^2\quad\text{if}\quad n\ge 2
\end{equation}
and
\begin{equation}
\label{eq:Galois1}
 \gal{\IQ_q(p^n)/\IQ_q}\cong \IZ/(q-1)\IZ\times(\IZ/p^{n-1}\IZ)^2.
\end{equation}
\item[(ii)] Let $k$ and $i$ be integers with $1\le
  k\le n$ and $q^{k-1} \le i \le q^{k}-1$. The
higher ramification groups are given by
  \begin{equation*}
    \ram{\IQ_q(p^n)/\IQ_q}{i}= \gal{\IQ_q(p^n)/\IQ_q(p^k)}.
  \end{equation*}
\item[(iii)] 
Recall that $M\in\IN$ is coprime to $p$.
The image of the representation $\gal{\IQ_q(p^n)/\IQ_q}\rightarrow
\aut{E[p^n]}$
contains multiplication by $\pm M$ and acts transitively on torsion points
of order $p^n$. 
\end{enumerate}
\end{lemma}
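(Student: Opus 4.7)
The plan is to identify the formal group $\hat E$ of $E$ over $\IZ_q$ as a Lubin-Tate formal $\IZ_q$-module for the uniformizer $p$ of $\IQ_q$, and then to extract (i), (ii), and (iii) from standard Lubin-Tate theory. The crucial input is Lemma \ref{lem:frobscalar}, which gives $\widetilde\varphi_q = [\pm p]$ in $\mathrm{End}(\widetilde E)$. Since the $q$-power Frobenius acts on the formal group $\hat{\widetilde E}$ as $X\mapsto X^q$, this identity translates into $[p]_{\hat E}(X)\equiv \pm u X^q \pmod{p}$ for some unit $u\in\IZ_q^\times$, while $[p]_{\hat E}(X)=pX+\cdots$ holds automatically. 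Absorbing the unit by a coordinate change, $[p]_{\hat E}$ becomes a Lubin-Tate series over $\IZ_q$ for the uniformizer $\pm p$, so Lubin-Tate theory promotes the $\IZ_p$-action on $\hat E$ to a $\IZ_q$-action under which $\hat E[p^n]$ is a free $\IZ_q/p^n\IZ_q$-module of rank one. Supersingularity ensures that every $p^n$-torsion point of $E$ reduces to the identity, so $E[p^n]=\hat E[p^n]$ and hence $\IQ_q(p^n)=\IQ_q(\hat E[p^n])$.

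Given the identification, part (i) follows from the main theorem of Lubin-Tate: $\IQ_q(p^n)/\IQ_q$ is totally ramified abelian, and there is a canonical isomorphism $\gal{\IQ_q(p^n)/\IQ_q}\cong (\IZ_q/p^n\IZ_q)^\times$. The latter group has order $(q-1)q^{n-1}$ and decomposes as $\mu_{q-1}\times (1+p\IZ_q)/(1+p^n\IZ_q)$. The $p$-adic logarithm (available since $p\ge 5$) identifies the one-units with the additive group $\IZ_q/p^{n-1}\IZ_q\cong (\IZ/p^{n-1}\IZ)^2$, which yields (\ref{eq:Galois1}). The subextension $\IQ_q(p^n)/\IQ_q(p^{n-1})$ corresponds under this isomorphism to the kernel $(1+p^{n-1}\IZ_q)/(1+p^n\IZ_q)\cong \IZ_q/p\IZ_q\cong(\IZ/p\IZ)^2$, giving (\ref{eq:Galois2}).

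For part (ii) I would appeal to the classical computation of the higher ramification filtration of a Lubin-Tate extension: under the isomorphism of (i) one has $\ram{\IQ_q(p^n)/\IQ_q}{i}=(1+p^k\IZ_q)/(1+p^n\IZ_q)$ whenever $q^{k-1}\le i\le q^k-1$, and the right-hand side corresponds precisely to $\gal{\IQ_q(p^n)/\IQ_q(p^k)}$ via the Galois correspondence of (i). Part (iii) is then immediate from the identification: the Galois action on $E[p^n]=\hat E[p^n]\cong \IZ_q/p^n\IZ_q$ is by $(\IZ_q/p^n\IZ_q)^\times$-multiplication, so $\pm M$ (a unit since $\gcd(M,p)=1$) lies in the image and acts as the integer $\pm M$ on $E[p^n]$; transitivity on units of $\IZ_q/p^n\IZ_q$, i.e.\ on points of exact order $p^n$, is built into the Lubin-Tate module structure.

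The main technical obstacle is the first step: turning the mod-$p$ congruence $[p]_{\hat E}(X)\equiv \pm X^q\pmod{p}$ into an honest $\IZ_q$-module structure on $\hat E$ defined over $\IZ_q$ itself and not merely over $\unr{\IZ_q}$. The supersingularity of $\widetilde E$ together with the identity $\widetilde\varphi_q=[\pm p]$ supplies exactly the extra endomorphism, already defined over $\IF_q$, required for the descent to succeed.
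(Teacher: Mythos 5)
Your proof follows the same basic strategy as the paper: use Lemma \ref{lem:frobscalar} to recognize the formal group of $E$ over $\IZ_q$ as a Lubin--Tate formal $\O_{\IQ_q}$-module, identify $E[p^n]$ with the $p^n$-torsion of that module using supersingularity, and then read (i), (ii), (iii) off the standard structure theory (Neukirch V.5.4, V.6.1, V.5.5). The one genuine divergence is in how you deal with the case $a_q=-2p$. The paper handles it by quadratic twisting: it passes to $E_t$ with $a'_q=+2p$, proves the lemma for $E_t$, and then transfers the conclusions back to $E$ via the isomorphism $\chi\colon E\to E_t$ defined over the unramified quadratic extension $\IQ_{q^2}$; this transfer is where most of the paper's length goes, since it requires carefully redoing the Galois and ramification bookkeeping after composing with an unramified extension (via Lemma \ref{lem:lift}). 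You instead change the uniformizer: when $\widetilde\varphi_q=[-p]$, the series $[-p](T)=-[p](T)$ satisfies $[-p](T)\equiv -pT\ (\mathrm{mod}\ \deg 2)$ and $[-p](T)\equiv T^q\ (\mathrm{mod}\ p)$, so it is a Lubin--Tate series for the uniformizer $-p$ of $\IQ_q$. Since the degree formula, abelianness, higher ramification filtration, and Artin-map description of the Galois action are all uniform over the choice of uniformizer, and since the $(-p)^n$-torsion equals $E[p^n]$, the three parts follow exactly as in the $+2p$ case. This is a valid and substantially shorter route, and it avoids the base change to $\IQ_{q^2}$ entirely. One small overcaution in your write-up: with the parameter $T=-x/y$ and a short Weierstrass equation (available as $p\ge 5$), inversion on $E$ is $(x,y)\mapsto(x,-y)$ and hence $[-1](T)=-T$ exactly; so the mod-$p$ congruence is precisely $[p](T)\equiv\pm T^q$ with no extra unit $u$, and no auxiliary coordinate change is required before invoking Lubin--Tate. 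The rest of your sketch is correct; in particular you correctly place the Lubin--Tate module over $\IZ_q$ (not $\IZ_p$), which makes the degree $(q-1)q^{n-1}$ and the ramification indices $q^{k-1}\le i\le q^k-1$ come out right.
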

\begin{proof}
We use $a_q=\pm 2p$ from  Lemma \ref{lem:frobscalar}.

Let us first prove the current lemma if $a_{q}=2p$. 
In this case we have
\begin{equation}
\label{eq:multbyp}
 \widetilde{\varphi_q} =  [p]  \quad\text{on}\quad \widetilde{E}. 
\end{equation}

Taking $-x/y$ as a local parameter at
the origin of $E$ determines the
 formal group law associated to $E$, cf. Chapter IV \cite{Silverman:AEC}. We let $[p](T) \in
\IZ_{q}\llbracket T\rrbracket$  denote 
the multiplication-by-$p$ power series, then 
\begin{equation}
\label{eq:pTmodT2}
  [p](T) \equiv pT \mod T^2\IZ_{q}\llbracket
T\rrbracket.
\end{equation}

The reduction of $[p](T)$ modulo $p$ is the multiplication-by-$p$
power series of the formal group associated to $\widetilde E$. 
Relation
(\ref{eq:multbyp}) implies
\begin{equation*}
  [p](T) \equiv T^{q} \mod p\IZ_{q}\llbracket T\rrbracket.
\end{equation*}

This congruence and (\ref{eq:pTmodT2}) imply that
 $[p](T)$ is a Lubin-Tate series, cf. Chapter V \S 2 and \S 4
\cite{Neukirch}. 
It follows from the theory  as laid out in 
\loccitd that the formal group associated to $E$ is a Lubin-Tate
module over $\IZ_p$ for the prime element $p$.

Since $E$ has supersingular reduction, its reduction  has no
torsion points of order divisible by $p$.
By Proposition VII.2.2
  \cite{Silverman:AEC}
the group of $p^n$-division points of said
Lubin-Tate module is isomorphic to $E[p^n]$. 
We will identify both groups since said isomorphism is
compatible with the action of $\gal{\IQpbar/\IQ_q}$. 

Theorem V.5.4 \cite{Neukirch} implies 
that $\IQ_{q}(p^n)/\IQ_{q}$ is totally ramified
and of degree $(q-1)q^{n-1}$. 
The same result stipulates that
$\gal{\IQ_q(p^n)/\IQ_q}$ is isomorphic to $\IZ_q^\times /
\IZ_q^{(n)}$
with $\IZ_q^{(n)}$  the
$n$-th higher unit group of $\IZ_q$.  
Let us consider the short exact sequence
\begin{equation*}
  1 \rightarrow \IZ_q^{(1)}/\IZ_q^{(n)} \rightarrow
\IZ_q^\times / \IZ_q^{(n)}\rightarrow \IZ_q^\times /
\IZ_q^{(1)}\rightarrow 1.
\end{equation*}
The group $\IZ_q^{(1)}/\IZ_q^{(n)}$ is isomorphic to 
$p\IZ_q/p^n\IZ_q \cong (\IZ/p^{n-1}\IZ)^2$
by Proposition II.5.5
 \cite{Neukirch}. 
On the other hand 
 $\IZ_q^{\times}/\IZ_q^{(1)}$ is cyclic of order $q-1$ by Proposition 
II.3.10 \loccitd 
The exact sequence above splits since the groups on the outside have
coprime orders. We conclude (\ref{eq:Galois1}).

The Galois group in (\ref{eq:Galois2}) is the kernel of the surjective
homomorphism
$\gal{\IQ_q(p^n)/\IQ_q}\rightarrow \gal{\IQ_q(p^{n-1})/\IQ_q}$.
Statement (\ref{eq:Galois2}) now follows  from (\ref{eq:Galois1}) and
elementary group theory. This concludes the proof of part (i) when 
$a_q = 2p$.

The statement on the higher ramification
groups in part (ii) is Proposition V.6.1 \cite{Neukirch}.

We now come to part (iii).
The first claim follows
 from Theorem  V.5.4 \cite{Neukirch}.
 Indeed
 we have identified $E[p^n]$ with the $p^n$-torsion
points of the Lubin-Tate module introduced above. 
We will obtain
a field automorphism inducing multiplication by $M$ on $E[p^n]$ by using 
the local norm residue symbol from local class field theory
\begin{equation*}
  (\,\cdot\,,\IQ_q(p^n)/\IQ_q) : \IQ_q(p^n)^\times \rightarrow 
\gal{\IQ_q(p^n)/\IQ_q}.
\end{equation*}
The Theorem of Lubin and Tate, see  V.5.5 \cite{Neukirch},
states that
$(\pm M^{-1},\IQ_q(p^n)/\IQ_q)$ acts on 
$E[p^n]$ as multiplication by $\pm M$.
For the second claim we need in addition
 Proposition V.5.2 {\it ibid.}

The proof of the lemma is complete in the case $a_{q}=2p$.
We shall not neglect the case  $a_{q}=-2p$
since this  occurs  if $a$ and $b$ happen to lie in $\IZ_{p}$, cf. the
example following this proof. 
We will reduce to the case already proved by 
twisting $E$.
This has the effect of flipping the sign of $a_q$.
The details are as follows.

Because $p\not = 2$ there exists $t\in \IZ_{q}$ 
which is not a square modulo $p$.
In particular, $t\not\in p\IZ_q$ and $\IQ_q(t^{1/2})/\IQ_q$
is an unramified
quadratic extension. In other words $\IQ_q(t^{1/2})=\IQ_{q^2}$.

Let us consider the quadratic twist
$E_t$ of $E$ determined by $y^2 = x^3+at^2x + bt^3$. It too has good
reduction $\widetilde E_t$ which  is a quadratic twist of $\widetilde
E$. 
We note that 
 $\widetilde E_t(\IF_{q}) = q+ 1 - a'_{q}$
with $a'_{q}$ the
 trace of the $q$-Frobenius of $\widetilde E_{t}$.
By Proposition 13.1.10 \cite{Husemoeller} we find
$a'_{q}=-a_{q}=2p$. So we may apply the current lemma to $E_t$. 

The elliptic curves $E$ and $E_t$ are isomorphic over
$\IQ_{q^2}$.
Indeed, $(x,y)\mapsto (tx,t^{3/2}y)$ determines an isomorphism
$\chi:E\rightarrow E_t$. 
Hence
\begin{equation}
\label{eq:fieldequality}
  \IQ_{q^2}(E_t[p^n]) = \IQ_{q^2}(p^n).
\end{equation}

We claim that $\IQ_q(p^n)/\IQ_q$ is totally ramified. 
Recall that $\IQ_q(E_t[p^n])/\IQ_q$ is totally
ramified.  Lemma \ref{lem:lift}(i) and (\ref{eq:fieldequality}) imply that the inertia degree of 
$\IQ_{q^2}(p^n)/\IQ_q$ is $2$. In order to prove our claim it
suffices to show that the unramified extension
 $\IQ_{q^2}(p^n)/\IQ_q(p^n)$  is non-trivial. For then it 
is of degree $2$ 
and must account for the full residue field extension
of $\IQ_{q^2}(p^n)/\IQ_q$. 
Restriction induces an isomorphism
between the  groups $\gal{\IQ_{q^2}(E_t[p^n])/\IQ_q}$ and
 $\gal{\IQ_q(E_t[p^n])/\IQ_q}\times \gal{\IQ_{q^2}/\IQ_q}$. 
So there is $\sigma\in \gal{\IQ_{q^2}(E_t[p^n])/\IQ_q}$
with $\sigma(t^{1/2})=-t^{1/2}$.
In view of statement (iii) of this lemma applied to the elliptic curve
$E_t$ we may arrange that $\sigma$ acts on $E_t[p^n]$ as $[-1]$.
Suppose $S=(x,y)\in E[p^n]$. Using
$\chi(S)\in E_t[p^n]$ we find
\begin{equation*}
 [-1](\chi(S)) = \sigma(\chi(S)) = (\sigma(tx),\sigma(t^{3/2}y))
 = (t\sigma(x),-t^{3/2}\sigma(y)) = [-1](\chi(\sigma(S)))
\end{equation*}
which implies $S=\sigma(S)$. So $\sigma$ fixes the field
$\IQ_q(p^n)$. We conclude $\IQ_q(p^n) \not= \IQ_{q^2}(p^n)$
because $\sigma$ is not trivial. Our claim from above follows
and with it the first assertion of (i) for $E$.

By Lemma \ref{lem:lift}(i)  restriction induces  isomorphisms
$\gal{\IQ_{q^2}(p^n)/\IQ_{q^2}(p^k)}\rightarrow
\gal{\IQ_q(p^n)/\IQ_q(p^k)}$
and 
$\gal{\IQ_{q^2}(E_t[p^n])/\IQ_{q^2}(E_t[p^k])}\rightarrow
\gal{\IQ_q(E_t[p^n])/\IQ_q(E_t[p^k])}$ of groups
for $0\le k\le n$. 
So
\begin{equation*}
 \gal{\IQ_q(p^n)/\IQ_q(p^k)}\cong
 \gal{\IQ_q(E_t[p^n])/\IQ_q(E_t[p^k])} 
\end{equation*}
implies the remaining assertions of
part (i). 

Let us  prove (iii) before (ii). By what has already been shown, there
is $\sigma \in
\gal{\IQ_q(E_t[p^n])/\IQ_q}$ that acts on $E_t[p^n]$ as multiplication by
$\pm M$.
We may lift $\sigma$ uniquely
to $\widetilde \sigma \in \gal{\IQ_{q^2}(p^n)/\IQ_{q^2}}$.
If $S \in E[p^n]$, then $\chi(S)\in E_t[p^n]$.
Because
$\widetilde\sigma$ commutes with $\chi$ we find 
that $\widetilde\sigma$ acts on $S$ as multiplication by $\pm M$. 
The first claim in 
part (iii) follows in general because $S$ was arbitrary. 
The second claim is proved along similar lines. 

Finally, we prove (ii) for $E$.
Say $i\ge -1$.
We now apply Lemma \ref{lem:lift}(iii) to
 the unramified extension 
$\IQ_{q^2}/\IQ_q$ and both
 totally ramified extensions $\IQ_{q}(p^n)/\IQ_q$
and $\IQ_q(E_t[p^n])/\IQ_q$.
 We find  isomorphisms of groups

\begin{equation*}
\if \usexypic 1
 \xymatrix  @C=-90pt @R=30pt 
{
 & \ar[ld] 
\gal{\IQ_{q^2}(p^n)/\IQ_{q^2}}\cap
   \ram{\IQ_{q^2}(p^n)/\IQ_{q}}{i} 
\ar[rd] &  \\
 \ram{\IQ_{q}(p^n)/\IQ_q}{i} & &   \ram{\IQ_{q}(E_t[p^n])/\IQ_q}{i}
}  
\else
\text{(Some diagram)}
\fi
\end{equation*}
 which are induced by
restrictions.
Part (ii)  follows formally  from this diagram and 
since $\chi$ is defined over $\IQ_{q^2}$.
\end{proof}

Twisting is necessary  to obtain a Lubin-Tate
series. To see why let us consider for the moment the case $p=5$
and elliptic curve  defined by $y^2=x^3 + 5x + 1$. 
It has good supersingular reduction with $a_{25} = -10$. The  multiplication-by-$5$
power series of the associated formal group satisfies 
\begin{equation*}
  [5](T) \equiv -T^{25}  \mod 5 \IZ_{25}\llbracket T\rrbracket.
\end{equation*}
 It is not a Lubin-Tate series because of the wrong sign.
However,  twisting by $\sqrt{2} \in \IZ_{25}$
gives the Weierstrass equation $y^2=x^3 + 10x + 2\sqrt 2$ which 
leads to
\begin{equation*}
  [5](T) \equiv T^{25} \mod 5 \IZ_{25}\llbracket T\rrbracket.
\end{equation*}

Recall that 
 $M\in\IN $ is  coprime to $p$ and suppose $n$ is a non-negative
integer. We set $N=p^nM$. 

Now we collect useful Galois theoretic statements 
involving the extension $\IQ_q(N)/\IQ_q$.

\begin{lemma}
  \label{lem:galoisprops}
The following statements hold.
\begin{enumerate}
\item[(i)]  The composition $\IQ_q(p^n) \IQ_q(M)$ is
  $\IQ_q(N)$. 
\item [(ii)]
The extension $\IQ_q(N)/\IQ_q(p^n)$ is unramified
and the extension $\IQ_q(N)/\IQ_q(M)$ is totally ramified.
\item[(iii)] Restricting to $\IQ_q(p^n)$ induces an isomorphism of
  groups
  \begin{equation*}
    \gal{\IQ_q(N)/\IQ_q(M)}\rightarrow \gal{\IQ_q(p^n)/\IQ_q}.
  \end{equation*}
In particular, $\IQ_q(N)/\IQ_q(M)$ is abelian. 
\item[(iv)]
If $n\ge 1$, then
\begin{equation}
\label{eq:Galois3}
  \gal{ \IQ_q(N)/ \IQ_q(N/p)}\cong \left\{
\begin{array}{ll}
  (\IZ/p\IZ)^2 &: \text{if }n\ge 2,\\
  \IZ/(q-1)\IZ &: \text{if }n= 1.\\
\end{array}
\right.
\end{equation}
\end{enumerate}
\end{lemma}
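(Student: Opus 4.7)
The plan is to deduce everything from the two preparatory Lemmas (the one on compositums of totally ramified and unramified local extensions, and the Lubin-Tate analysis of $\IQ_q(p^n)/\IQ_q$).

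For (i), I would use the fact that since $\gcd(p^n,M)=1$, every element of $E[N]$ is uniquely a sum of an element of $E[p^n]$ and an element of $E[M]$ (standard CRT for the group scheme), so the $x$- and $y$-coordinates generating $\IQ_q(N)$ are expressible by the field operations from the coordinates of torsion of order $p^n$ and of order $M$. Thus $\IQ_q(N)=\IQ_q(p^n)\IQ_q(M)$.

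For (ii) and (iii), I would apply Lemma \ref{lem:lift} with $F=\IQ_q$, $K=\IQ_q(p^n)$, $L=\IQ_q(M)$. The hypotheses hold: $K/F$ is totally ramified by Lemma \ref{lem:LT}(i), while $L/F$ is unramified by Lemma \ref{lem:unramified}. Part (ii) of the compositum lemma immediately gives the two ramification assertions for $KL=\IQ_q(N)$ by (i), which is (ii) of the present statement. Part (i) of the compositum lemma then gives the restriction isomorphism $\gal{\IQ_q(N)/L}\to \gal{K/F}$; this is the map in (iii). Abelianness of $\gal{\IQ_q(N)/\IQ_q(M)}$ follows because $\gal{\IQ_q(p^n)/\IQ_q}$ is abelian, again by Lemma \ref{lem:LT}(i).

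For (iv), I would argue via the isomorphism from (iii). Since $N/p=p^{n-1}M$, part (i) applied at level $N/p$ gives $\IQ_q(N/p)=\IQ_q(p^{n-1})\IQ_q(M)$, and in particular $\IQ_q(M)\subset\IQ_q(N/p)\subset\IQ_q(N)$. Under the restriction isomorphism $\gal{\IQ_q(N)/\IQ_q(M)}\to \gal{\IQ_q(p^n)/\IQ_q}$, a $\sigma$ in the left group fixes $\IQ_q(N/p)$ if and only if it fixes $\IQ_q(p^{n-1})$ (since it already fixes $\IQ_q(M)$), which happens if and only if $\sigma|_{\IQ_q(p^n)}$ lies in $\gal{\IQ_q(p^n)/\IQ_q(p^{n-1})}$. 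Hence
\begin{equation*}
\gal{\IQ_q(N)/\IQ_q(N/p)}\cong \gal{\IQ_q(p^n)/\IQ_q(p^{n-1})}.
\end{equation*}
For $n\ge 2$ the right side is $(\IZ/p\IZ)^2$ by (\ref{eq:Galois2}); for $n=1$ it is $\gal{\IQ_q(p)/\IQ_q}$, which from the proof of Lemma \ref{lem:LT}(i) (specifically $\IZ_q^\times/\IZ_q^{(1)}\cong \IZ/(q-1)\IZ$) is cyclic of order $q-1$.

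No step here seems to be a serious obstacle: this lemma is bookkeeping that organizes the preceding technical work. The only point requiring a little care is verifying in (iv) that the correspondence of fixing subfields is exactly $\IQ_q(N/p)\leftrightarrow \IQ_q(p^{n-1})$, which hinges on part (i) at level $N/p$ so that $\IQ_q(N/p)$ is literally generated over $\IQ_q(M)$ by $\IQ_q(p^{n-1})$.
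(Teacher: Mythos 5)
Your proposal is correct and follows essentially the same approach as the paper. The only mild variation is in (iv): the paper reapplies Lemma \ref{lem:lift}(i) afresh with $F=\IQ_q(p^{n-1})$, $K=\IQ_q(p^n)$, $L=\IQ_q(N/p)$ to get $\gal{\IQ_q(N)/\IQ_q(N/p)}\cong\gal{\IQ_q(p^n)/\IQ_q(p^{n-1})}$, whereas you deduce the same isomorphism by tracking fixing subgroups under the isomorphism already established in (iii); the two are equivalent.
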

\begin{proof}
Part (i) follows since any element of $E[N]$ is the sum of an
element in $E[p^n]$ and an element in $E[M]$.

By Lemma \ref{lem:unramified} the extension $\IQ_q(M)/\IQ_q$ is unramified
and Lemma \ref{lem:LT}(i) implies that
$\IQ_q(p^n)/\IQ_q$ is totally ramified. 
Part (ii) now follows from part (i) and Lemma \ref{lem:lift}(ii).

The first statement  in part (iii) follows from part (ii) and 
 Lemma \ref{lem:lift}(i).
The claim on commutativity is then a consequence of Lemma \ref{lem:LT}(i). 

To prove (iv) we first note $\IQ_q(N)  =
\IQ_q(p^n)\IQ_q(N/p)$ by part (i). 
We have a diagram of field extensions
\begin{equation*}
\if \usexypic 1
 \xymatrix  @C=0pt @R=20pt 
{
 & \ar@{-}[ld] 
\IQ_q(N) 
\ar@{-}[rd] &  \\
\ar@{-}[rd]_{\atopx{\text{totally}}{\text{ramified}}} \IQ_q(p^{n})  & & \IQ_q(N/p)
\ar@{-}[ld]^{\atopx{\text{unramified}}{}}  \\
& \IQ_q(p^{n-1}) & 
}
\else
\text{(Some diagram)}
\fi
\end{equation*}
the two claims on ramification  behavoir
 follow from part (ii).
By Lemma \ref{lem:lift}(i),  restricting field automorphisms induces
an isomorphism
\begin{equation*}
  \gal{ \IQ_q(N)/ \IQ_q(N/p)} \cong
  \gal{\IQ_q(p^n)/\IQ_q(p^{n-1})}.
\end{equation*}
With this  isomorphism (\ref{eq:Galois3}) follows
from (\ref{eq:Galois2}) and (\ref{eq:Galois1}), respectively.
\end{proof}

We state two auxiliary lemmas which are used in later sections. 
The first lemma describes the roots of unity 
in $\IQ_q(N)$ having order a
power of $p$. 

\begin{lemma}
\label{lem:rootofone}
We have $\IQ_q(N)\cap \mu_{p^\infty} = \mu_{p^n}$.
\end{lemma}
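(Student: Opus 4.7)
The plan is to show the two inclusions separately, with the nontrivial direction being an exponent-bound comparison in the Galois group.

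For the inclusion $\mu_{p^n}\subset \IQ_q(N)$, I would invoke the Weil pairing $E[p^n]\times E[p^n]\to \mu_{p^n}$. It is $\gal{\IQpbar/\IQ_q}$-equivariant and non-degenerate; in particular its image is a cyclic subgroup of $\mu_{p^n}$ of order $p^n$, hence equal to $\mu_{p^n}$. Since the pairing of two torsion points is evaluated inside $\IQ_q(E[p^n])\subset \IQ_q(N)$, we obtain $\mu_{p^n}\subset \IQ_q(N)$.

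For the reverse inclusion, suppose $\zeta\in \IQ_q(N)\cap \mu_{p^\infty}$ is of exact order $p^m$, and aim to prove $m\le n$. Since $p$ is odd and $\IQ_q/\IQ_p$ is unramified, the extension $\IQ_q(\zeta)/\IQ_q$ is totally ramified and cyclic of order $p^{m-1}(p-1)$, while by Lemma \ref{lem:unramified} the extension $\IQ_q(M)/\IQ_q$ is unramified. Therefore $\IQ_q(\zeta)\cap \IQ_q(M)=\IQ_q$, and restriction gives an isomorphism
\begin{equation*}
\gal{\IQ_q(\zeta)\IQ_q(M)/\IQ_q(M)} \xrightarrow{\cong} \gal{\IQ_q(\zeta)/\IQ_q},
\end{equation*}
so the group on the left is cyclic of order $p^{m-1}(p-1)$.

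The key step is then to place this cyclic group inside a group whose $p$-torsion structure we already understand. Because $\IQ_q(\zeta)\IQ_q(M)\subset \IQ_q(N)$, the displayed cyclic group embeds in $\gal{\IQ_q(N)/\IQ_q(M)}$. By Lemma \ref{lem:galoisprops}(iii) this latter group is isomorphic to $\gal{\IQ_q(p^n)/\IQ_q}$, which by Lemma \ref{lem:LT}(i) is
\begin{equation*}
\IZ/(q-1)\IZ \times (\IZ/p^{n-1}\IZ)^2.
\end{equation*}
Since $q-1=(p-1)(p+1)$ is coprime to $p$, the $p$-part of this group is $(\IZ/p^{n-1}\IZ)^2$, of exponent $p^{n-1}$. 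A cyclic subgroup therefore has $p$-part of order at most $p^{n-1}$. Comparing with our cyclic subgroup of $p$-part $p^{m-1}$, we conclude $p^{m-1}\le p^{n-1}$, i.e.\ $m\le n$, as required.

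The only subtlety I foresee is confirming that the Weil-pairing image is all of $\mu_{p^n}$ (standard, from non-degeneracy) and that $\gal{\IQ_q(\zeta)/\IQ_q}$ is genuinely cyclic; the latter uses $p$ odd and the fact that $\IQ_q/\IQ_p$ is unramified so $\gal{\IQ_q(\mu_{p^m})/\IQ_q}\cong \gal{\IQ_p(\mu_{p^m})/\IQ_p}\cong (\IZ/p^m\IZ)^\times$. Once these are in hand, the comparison of exponents closes the argument without further work.
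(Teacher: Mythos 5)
Your proof is correct, and it takes a route that differs in useful ways from the paper's. The one imprecision is the phrase ``embeds in'': since $\IQ_q(\zeta)\IQ_q(M)$ is an \emph{intermediate} field between $\IQ_q(M)$ and $\IQ_q(N)$, the group $\gal{\IQ_q(\zeta)\IQ_q(M)/\IQ_q(M)}$ is naturally a \emph{quotient} of $\gal{\IQ_q(N)/\IQ_q(M)}$, not a subgroup. This does not damage the argument: the exponent of a quotient of an abelian group is bounded by the exponent of the group, so the $p$-part of a cyclic quotient of $\IZ/(q-1)\IZ\times(\IZ/p^{n-1}\IZ)^2$ still has order at most $p^{n-1}$, giving $m\le n$. (For finite abelian groups every quotient is even isomorphic to a subgroup, so your wording could be justified, but ``quotient'' is what the Galois correspondence hands you directly.)

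Comparing with the paper: the paper first isolates the pure $p$-power case, proving $\IQ_q(p^n)\cap\mu_{p^\infty}\subset\mu_{p^n}$ by comparing the surjection $\gal{\IQ_q(p^n)/\IQ_q}\to\gal{\IQ_q(\zeta)/\IQ_q}$ with the explicit structures of both sides; it then reduces the general case $\zeta\in\IQ_q(N)$ to this one by a separate ramification argument (showing $\IQ_q(p^n)(\zeta)/\IQ_q(p^n)$ is simultaneously unramified and totally ramified, hence $\zeta\in\IQ_q(p^n)$), with a case split at $n=0$. You instead pass through $\IQ_q(M)$ and invoke Lemma \ref{lem:galoisprops}(iii) to identify $\gal{\IQ_q(N)/\IQ_q(M)}\cong\gal{\IQ_q(p^n)/\IQ_q}$ directly, then compare exponents of $p$-Sylow subgroups. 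Both proofs hinge on the same group-theoretic content from Lemma \ref{lem:LT}(i), namely that $\gal{\IQ_q(p^n)/\IQ_q}\cong\IZ/(q-1)\IZ\times(\IZ/p^{n-1}\IZ)^2$; your version avoids the $n=0$ case split and the auxiliary ramification lemma, at the cost of leaning on Lemma \ref{lem:galoisprops}(iii), which the paper's version does not need here. Overall your argument is a touch shorter and cleaner.
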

\begin{proof}
Properties of the Weil pairing imply
the inclusion ``$\supset$'' from the assertion.

To show the other inclusion we first verify
\begin{equation}
  \label{eq:claim1}
\IQ_q(p^n) \cap \mu_{p^\infty}\subset \mu_{p^n}.
\end{equation}
So let $\zeta$ lie $\IQ_q(p^n)$ and suppose it has order $p^{n'}$. We may assume
 $n'\ge n$. 

If $n=0$, then $\zeta\in\IQ_q$. But $\IQ_p(\zeta)/\IQ_p$ is totally
ramified by Proposition II.7.13 \cite{Neukirch} and is only
trivial if $n'=0$. 
Moreover,  this extension has degree
$[\IQ_q(\zeta):\IQ_q]$ by Lemma \ref{lem:lift}(ii).
So we must have $n'=0$. This proves (\ref{eq:claim1}) if $n=0$. 

We now suppose $n'\ge n\ge 1$.
Restriction induces a surjective homomorphism
$\gal{\IQ_q(p^n)/\IQ_q}\rightarrow \gal{\IQ_q(\zeta)/\IQ_q}$. 
The structure of both Galois groups is known. Indeed, by Lemma
\ref{lem:LT}(i) the group 
on the left
is isomorphic to 
$\IZ/(q-1)\IZ\times (\IZ/p^{n-1}\IZ)^2$.
On the other hand,
 $\gal{\IQ_q(\zeta)/\IQ_q}\cong \gal{\IQ_p(\zeta)/\IQ_p}$
as above by Proposition II.7.13.
The same result also implies
$\gal{\IQ_q(\zeta)/\IQ_q}\cong
(\IZ/p^{n'}\IZ)^\times\cong\IZ/(p-1)\IZ\times \IZ/p^{n'-1}\IZ$, the second
isomorphism holds
 since $p\not=2$. 
A group homomorphism
\begin{equation*}
  \IZ/(q-1)\IZ\times (\IZ/p^{n-1}\IZ)^2\rightarrow
\IZ/(p-1)\IZ\times\IZ/p^{n'-1}\IZ
\end{equation*}
 cannot be surjective if $n'>n$. So $n'=n$.
This shows that $\zeta$ has order $p^n$ and  claim
(\ref{eq:claim1}) holds. 

Now suppose that $\zeta\in \IQ_q(N)$ has order
$p^{n'}$. Again, we may assume $n' \ge n$.
The extension $\IQ_q(N)/\IQ_q(p^n)$ is unramified by Lemma
\ref{lem:galoisprops}(ii), so $\IQ_q(p^n)(\zeta)/\IQ_q(p^n)$ is unramified. 
Using the Weil pairing, we have already proved that 
 $\zeta$ is contained in $\IQ_q(p^{n'}) \supset \IQ_q(p^n)$.
 The extension $\IQ_q(p^{n'})/\IQ_q(p^n)$ is totally
ramified and therefore so is  $\IQ_q(p^n)(\zeta)/\IQ_q(p^n)$. We conclude $\zeta\in \IQ_q(p^n)$
and  thus the lemma follows from   (\ref{eq:claim1}).
\end{proof}

The second lemma will play a role in a descent argument used in a later
section.


\begin{lemma}
\label{lem:qpower}
Let us suppose $n\ge 1$. 
If  $\psi\in\gal{\IQ_q(N)/\IQ_q(N/p)}$
and $\alpha\in \IQ_q(N) \ssm\{0\}$
such that ${\psi(\alpha)}/{\alpha}\in \mu_{\infty}$, then
\begin{equation}
\label{eq:defineQn}
  \frac{\psi(\alpha)}{\alpha} \in \mu_{Q(n)}
\quad\text{where}\quad
  Q(n) =  \left\{
  \begin{array}{cl}
    q &: \text{if } n\ge 2, \\
    (q-1)q &: \text{if }n=1.
  \end{array}
\right.
\end{equation}
\end{lemma}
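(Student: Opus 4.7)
The strategy is to exploit that $\psi$ has small finite order. By Lemma \ref{lem:galoisprops}(iv) the order $m$ of $\psi$ divides $p$ when $n \ge 2$ and divides $q-1$ when $n=1$. Setting $\zeta := \psi(\alpha)/\alpha$, iterating $\psi(\alpha) = \zeta\alpha$ gives $\psi^j(\alpha) = \alpha\prod_{i=0}^{j-1}\psi^i(\zeta)$, so $\psi^m(\alpha)=\alpha$ forces the cocycle identity
\begin{equation*}
\prod_{i=0}^{m-1}\psi^i(\zeta) = 1.
\end{equation*}
The plan is to analyse this identity separately on the $p$-primary and prime-to-$p$ parts of $\zeta$.

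I would decompose $\zeta = \zeta_p\zeta'$ with $\zeta_p$ of $p$-power order and $\zeta'$ of order coprime to $p$. Lemma \ref{lem:rootofone} gives $\zeta_p \in \mu_{p^n}$. The factor $\zeta'$ generates an unramified extension of $\IQ_q$, so it lies in the maximal unramified subextension of $\IQ_q(N)/\IQ_q$; by Lemmas \ref{lem:unramified} and \ref{lem:galoisprops}(ii) this subextension is exactly $\IQ_q(M)$. Since $M\mid N/p$ we have $\IQ_q(M)\subset \IQ_q(N/p)$, hence $\psi$ fixes $\zeta'$. Splitting the cocycle identity into coprime orders then produces $(\zeta')^m = 1$ immediately.

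The heart of the argument is the $p$-primary identity $\prod_{i=0}^{m-1}\psi^i(\zeta_p)=1$, and the key structural input is that $\psi$ fixes $\mu_{p^{n-1}}\subset\IQ_q(p^{n-1})\subset\IQ_q(N/p)$. When $n\ge 2$, this forces $\psi$ to act on the cyclic group $\mu_{p^n}$ by $\zeta_p\mapsto\zeta_p^{1+p^{n-1}k}$ for some $k\in\IZ/p\IZ$; a short binomial expansion modulo $p^n$ (using $p$ odd) gives $\sum_{j=0}^{p-1}(1+p^{n-1}k)^j\equiv p \pmod{p^n}$, so the identity collapses to $\zeta_p^p = 1$. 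Combined with $(\zeta')^m = 1$ and the fact that $m\mid p$ is coprime to the order of $\zeta'$, this yields $\zeta' = 1$ and $\zeta\in\mu_p\subset\mu_q$. When $n=1$, $\psi$ acts on $\mu_p$ as $\zeta_p\mapsto\zeta_p^c$ for some $c\in(\IZ/p\IZ)^\times$ with $c^m = 1$; if $c=1$ then $\zeta_p^m = 1$ and $\gcd(m,p)=1$ (since $m\mid q-1$) force $\zeta_p = 1$, while if $c\ne 1$ the geometric sum $(c^m-1)/(c-1)$ vanishes in $\IF_p$ and imposes no constraint beyond $\zeta_p\in\mu_p$. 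Combined with $(\zeta')^m = 1$ giving $\zeta'\in\mu_{q-1}$, this yields $\zeta\in\mu_{p(q-1)}\subset\mu_{q(q-1)}$.

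The main obstacle is the arithmetic of $\psi$ acting on $\mu_{p^n}$; once one recognises that $\psi$ automatically fixes $\mu_{p^{n-1}}$ because this subgroup already lies inside the fixed field $\IQ_q(N/p)$, the necessary binomial and geometric series identities are exactly what cause the $p$-primary factor of the cocycle relation to collapse and produce the claimed bound on the order of $\zeta$.
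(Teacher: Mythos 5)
Your proof is correct and shares the key structural ideas with the paper's argument (the decomposition of $\psi(\alpha)/\alpha$ into a $p$-primary and prime-to-$p$ factor, ramification forcing $\psi$ to fix the prime-to-$p$ factor, Lemma~\ref{lem:rootofone} controlling the $p$-part, and Lemma~\ref{lem:galoisprops}(iv) controlling the order of $\psi$, all fed into the cocycle identity). Where you differ is in handling the $p$-primary factor: you explicitly pin down the action of $\psi$ on $\mu_{p^n}$ as $\zeta_p\mapsto\zeta_p^{1+p^{n-1}k}$ (via the observation that $\psi$ fixes $\mu_{p^{n-1}}\subset\IQ_q(N/p)$), and then evaluate the geometric/binomial sum $\sum_{j=0}^{p-1}(1+p^{n-1}k)^j\equiv p\pmod{p^n}$. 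The paper avoids this computation: it uses Lemma~\ref{lem:rootofone} to see $\beta^{pM'}\in\IQ_q(N/p)$, combines this with the fact that $\xi\in\IQ_q(M)$ is fixed via a B\'ezout identity $1=ap^{n'}+bM'$ to show $\psi$ fixes $\beta^p$ outright, and then gets $\beta^{pt}=1$ from a single telescoping step applied uniformly to both parts. Your version is a bit more computational but has the pedagogical advantage of making the $\IF_p$-linear structure of $\gal{\IQ_q(p^n)/\IQ_q(p^{n-1})}$ on $\mu_{p^n}$ visible; the paper's version is shorter and sidesteps the case split over whether $c=1$ or not. Both yield the same bound $pt\mid Q(n)$.
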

\begin{proof}
In the following it is useful to write $x^\psi$ for  $\psi(x)$
if $x\in \IQ_q(N)$. 

Let $N'$ denote the order of the root of unity
$\beta = \alpha^{\psi-1} = \psi(\alpha)/\alpha$. We decompose
$N'=p^{n'}M'$ with $n'\ge 0$ and $p\nmid M'$.
The root of unity
 $\xi=\beta^{p^{n'}}\in \IQ_q(N)$ has order $M'$.
The extension $\IQ_q(N)/\IQ_q(M)$ is totally ramified by Lemma
\ref{lem:galoisprops}(ii). This property is shared by the subextension
 $\IQ_q(M)(\xi) / \IQ_q(M)$.
The order of $\xi$ is prime to $p$, so
 $\IQ_p(\xi)/\IQ_p$ is unramified by Proposition II.7.12 \cite{Neukirch}.
Hence $\IQ_q(M)(\xi)/ \IQ_q(M)$ is
unramified.
We find $\xi \in \IQ_q(M)$.
In particular, $\xi$ is fixed by $\psi$. 

The order of $\beta^{M'}$ is $p^{n'}$. Hence 
$n'\le n$ by Lemma \ref{lem:rootofone} and because $\beta \in
\IQ_q(N)$. The same lemma also yields $\beta^{pM'} \in \IQ_q(N/p)$
and so $\psi$ fixes $\beta^{pM'}$.  

Let us fix integer $a$ and $b$ with $1=ap^{n'}+bM'$. Then
$\beta = \xi^a \beta^{bM'}$ and 
so $\psi$ fixes $\beta^p$ since it fixes $\xi^{ap}$ and $\beta^{bpM'}$.

Let $t$ denote the order of $\psi$ as an element of
$\gal{\IQ_q(N)/\IQ_q(N/p)}$. 
Then 
\begin{equation}
\label{eq:ttrick}
1 = \alpha^{p(\psi^t-1)} = \alpha^{p(\psi-1)(\psi^{t-1}+\cdots + \psi+1)}
 = \beta^{p(\psi^{t-1}+\cdots +\psi + 1)} = \beta^{pt}
\end{equation}
because $\beta^{p\psi}=\beta^p$. 
But by Lemma \ref{lem:galoisprops}(iv) the order $t$ is a
divisor of $p$
if $n\ge 2$ and a divisor of $q-1$ if $n=1$. 
The lemma follows from  $pt | Q(n)$.
\end{proof}

This  proof shows that $Q(1)=(q-1)q$ can be replaced by
 the smaller value $(q-1)p$. But it is convenient to have
 $q|Q(n)$ for all $n$ in the proof of Lemma \ref{lem:ptorsion} below.

\section{Local Metric Estimates}
\label{sec:localmetric}

In this section $E$ and $p$ are as in the previous one. Moreover,
$q=p^2$ and $N$ is a positive integer. 
The simplification in notation (\ref{eq:easenotation1}) is  used in this section too.
We recall that $\varphi_q \in \gal{\IQpunr/\IQ_q}$ is the lift of 
Frobenius squared. 

We come to a first metric estimate which is used in the unramified case $p\nmid N$.

\begin{lemma}
\label{lem:metric2}
Suppose $p\nmid N$ and  $\alpha\in \IQ_q(N)$.  Then
$\alpha\in \IQpunr$ and
 \begin{equation}
\label{eq:metric2}
 |\varphi_q(\alpha)-\alpha^q|_p  \le p^{-1}
\max\{1,|\varphi_q(\alpha)|_p\}\max\{1,|\alpha|_p\}^q.
 \end{equation}
\end{lemma}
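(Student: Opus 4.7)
The plan is to split on whether $\alpha$ has absolute value at most one or bigger than one, and in both cases reduce to the single key congruence that the lift of Frobenius reduces to the $q$-power map on the residue field.

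First I would observe that since $p\nmid N$, Lemma \ref{lem:unramified} gives $\IQ_q(N)\subset \IQpunr$, so $\alpha \in \IQpunr$ and everything takes place in the unramified closure. The central ingredient is then the congruence
\begin{equation*}
  \varphi_q(\beta) \equiv \beta^q \pmod{p\,\O_{\IQpunr}}
  \quad\text{for every } \beta \in \O_{\IQpunr},
\end{equation*}
which follows because $\varphi_q=\varphi_p^2$ lifts the $q$-power Frobenius of $\overline{\IF_p}$ on the residue field. Equivalently, $|\varphi_q(\beta)-\beta^q|_p \le p^{-1}$ whenever $|\beta|_p\le 1$.

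Next I would do the two cases. If $|\alpha|_p\le 1$ then both $\max\{1,|\alpha|_p\}$ and $\max\{1,|\varphi_q(\alpha)|_p\}$ equal $1$ (using Galois invariance of $|\cdot|_p$ for the second one), so the claimed bound reduces exactly to the key congruence applied to $\beta=\alpha$. If $|\alpha|_p>1$, I would set $\beta=\alpha^{-1}\in\O_{\IQpunr}$ and apply the key congruence to $\beta$, obtaining
\begin{equation*}
  |\varphi_q(\alpha)^{-1} - \alpha^{-q}|_p \le p^{-1}.
\end{equation*}
Multiplying through by $|\varphi_q(\alpha)\,\alpha^q|_p$ and using Galois invariance $|\varphi_q(\alpha)|_p=|\alpha|_p$ then yields
\begin{equation*}
  |\varphi_q(\alpha)-\alpha^q|_p \le p^{-1}|\alpha|_p^{q+1}
  = p^{-1}\max\{1,|\varphi_q(\alpha)|_p\}\max\{1,|\alpha|_p\}^q,
\end{equation*}
which is the desired estimate in this case.

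There is no real obstacle: the proof is a case distinction around the unit ball, and the only substantive input is that $\varphi_q$ lifts the $q$-Frobenius modulo $p$, which is standard for unramified extensions. The mild subtlety worth checking is that the $p$-adic absolute value on $\IQpbar$ is Galois invariant, so $|\varphi_q(\alpha)|_p=|\alpha|_p$, ensuring the two cases really are exhaustive and the second case produces the predicted factor $\max\{1,|\varphi_q(\alpha)|_p\}$ on the right-hand side.
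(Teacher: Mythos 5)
Your proof is correct and follows essentially the same route as the paper's: reduce to the integral case, invoke the congruence $\varphi_q(\beta)\equiv\beta^q \pmod{p}$ coming from $\varphi_q$ lifting the $q$-power Frobenius (which gives the bound $p^{-1}$ with trivial max-factors), and handle $|\alpha|_p>1$ by inverting $\alpha$ and clearing denominators. The only cosmetic difference is that you appeal to Galois invariance of $|\cdot|_p$ to simplify the second case to $p^{-1}|\alpha|_p^{q+1}$, while the paper keeps $|\varphi_q(\alpha)|_p$ and $|\alpha|_p^q$ as separate factors throughout; both give exactly the stated bound.
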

\begin{proof}
The field $L=\IQ_q(N)$ is an unramified 
 extension of $\IQ_q$
by Lemma \ref{lem:unramified}. This is the first claim. 
To prove the second claim we first assume that
 $\alpha$ is an integer in $L$,
i.e. $|\alpha|_p\le 1$.
Then
 $\varphi_q(\alpha)-\alpha^q$ is in the maximal ideal of
$\O_{L}$. This
 maximal ideal is $p\O_{L}$ since $L/\IQ_q$ is unramified. 
Therefore, $|\varphi_q(\alpha)-\alpha^q|_p \le |p|_p = p^{-1}$
and thus (\ref{eq:metric2}) holds true.

If $\alpha$ is not an integer in $L$, then $\alpha^{-1}$ is and
we have
 $|\varphi_q(\alpha^{-1})-\alpha^{-q}|_p\le p^{-1}$ by what has already
been proved.
The ultrametric triangle inequality  yields
\begin{alignat*}1
|\alpha^{-q}(\varphi_q(\alpha)-\alpha^q)|_p = 
|(\alpha^{-q}-\varphi_q(\alpha^{-1}))\varphi_q(\alpha)|_p
\le p^{-1} |\varphi_q(\alpha)|_p
\end{alignat*}
and our lemma now follows quickly.
\end{proof}

The second metric estimate 
finds application in the ramified case $p|N$.

\begin{lemma}
\label{lem:metric1}
Suppose $p| N$ and  $\alpha\in \IQ_q(N)$. Then
 \begin{equation}
\label{eq:metric1}
 |\psi(\alpha)^{q}-\alpha^{q}|_p 
\le p^{-1} \max\{1,|\psi(\alpha)|_p\}^{q}\max\{1,|\alpha|_p\}^{q}
 \end{equation}
for all $\psi \in \gal{ \IQ_q(N)/ \IQ_q(N/p)}$.
\end{lemma}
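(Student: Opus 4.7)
The plan is to reduce to the case $\alpha \in \O_{\IQ_q(N)}$ and then combine a higher-ramification estimate on $\delta = \psi(\alpha)-\alpha$ with a binomial expansion. Since $\psi$ fixes $\IQ_q$, we have $|\psi(\alpha)|_p = |\alpha|_p$. The case $\alpha = 0$ is trivial. When $|\alpha|_p \le 1$, the desired inequality reduces to $|\psi(\alpha)^q - \alpha^q|_p \le p^{-1}$. The case $|\alpha|_p > 1$ follows by applying the integer case to $\alpha^{-1}$ and using the identity
\begin{equation*}
\psi(\alpha)^q - \alpha^q = -\alpha^q\psi(\alpha)^q\bigl(\psi(\alpha^{-1})^q - \alpha^{-q}\bigr),
\end{equation*}
which multiplies the bound $p^{-1}$ by $|\alpha|_p^q|\psi(\alpha)|_p^q = \max\{1,|\alpha|_p\}^q\max\{1,|\psi(\alpha)|_p\}^q$.

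The key step is the ramification estimate
\begin{equation*}
|\psi(a) - a|_p \le p^{-1/(q-1)}\quad\text{for all}\quad a \in \O_{\IQ_q(N)}.
\end{equation*}
Write $N = p^n M$ with $p \nmid M$ and $n \ge 1$. By Lemma \ref{lem:galoisprops}(iii), restriction identifies $\gal{\IQ_q(N)/\IQ_q(N/p)}$ with $\gal{\IQ_q(p^n)/\IQ_q(p^{n-1})}$. By Lemma \ref{lem:LT}(ii) the latter equals $\ram{\IQ_q(p^n)/\IQ_q}{q^{n-1}-1}$ when $n \ge 2$ and is the full inertia group $\ram{\IQ_q(p)/\IQ_q}{0}$ when $n = 1$. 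Applying Lemma \ref{lem:lift}(iii) to the totally ramified $\IQ_q(p^n)/\IQ_q$ and the unramified $\IQ_q(M)/\IQ_q$ with compositum $\IQ_q(N)$ then yields $\psi \in \ram{\IQ_q(N)/\IQ_q}{q^{n-1}-1}$ in the first case and $\psi \in \ram{\IQ_q(N)/\IQ_q}{0}$ in the second. Combined with the ramification index $e = (q-1)q^{n-1}$ from Lemma \ref{lem:LT}(i), both cases produce the bound $|\psi(a)-a|_p \le p^{-1/(q-1)}$.

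Finally I settle the integer case by expanding
\begin{equation*}
\psi(\alpha)^q - \alpha^q = (\alpha + \delta)^q - \alpha^q = \sum_{k=1}^{q}\binom{q}{k}\alpha^{q-k}\delta^k.
\end{equation*}
For $1 \le k \le q-1$, Kummer's theorem gives $|\binom{q}{k}|_p \le p^{-1}$. Combined with $|\alpha|_p \le 1$ and $|\delta|_p \le p^{-1/(q-1)}$, each such term has absolute value at most $p^{-1-k/(q-1)} < p^{-1}$, while $|\delta^q|_p \le p^{-q/(q-1)} < p^{-1}$. The ultrametric inequality then delivers $|\psi(\alpha)^q - \alpha^q|_p \le p^{-1}$, completing the argument.

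The main obstacle is the ramification estimate: one must marry the Lubin--Tate description of higher ramification in $\IQ_q(p^n)/\IQ_q$ from Lemma \ref{lem:LT}(ii) with the compositum bookkeeping in Lemma \ref{lem:lift}(iii) to transport ramification groups from $\IQ_q(p^n)/\IQ_q$ to $\IQ_q(N)/\IQ_q$. Once $|\delta|_p \le p^{-1/(q-1)}$ is in hand, the binomial trick works because $p \mid \binom{q}{k}$ for $1 \le k \le q-1$, while the orphan term $\delta^q$ inherits just enough ramification to keep its valuation strictly above $1$.
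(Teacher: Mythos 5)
Your proof is correct and takes essentially the same route as the paper: both reduce to the integer case, use Lemma \ref{lem:LT}(ii) and Lemma \ref{lem:lift}(iii) to place $\psi$ in the higher ramification group $\ram{\IQ_q(N)/\IQ_q}{q^{n-1}-1}$ and so obtain $|\psi(\alpha)-\alpha|_p\le p^{-1/(q-1)}$, and then conclude using the divisibility $p\mid\binom{q}{k}$ for $1\le k\le q-1$. The only cosmetic difference is that you write out the binomial expansion term by term, whereas the paper packages the same observation as the congruence $(\psi(\alpha)-\alpha)^q\equiv\psi(\alpha)^q-\alpha^q\pmod{p\O}$ together with $(\psi(\alpha)-\alpha)^q\in\mathfrak{P}^{q^n}\subset\mathfrak{P}^e$.
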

\begin{proof}
For brevity we write $K = \IQ_q(p^n)$
and $L=\IQ_q(N/p^n)$ where $n\ge 1$ is the greatest integer with
$p^n|N$. Then $KL = \IQ_q(N)$ by Lemma \ref{lem:galoisprops}(i). 

As in the proof of Lemma \ref{lem:metric2}, we
 first suppose that $\alpha$ is an integer in $\IQ_q(N)$.
We have $\psi|_{K} \in
\gal{K/\IQ_q(p^{n-1})}$.
By Lemma \ref{lem:LT}(ii)  this restriction is in
 $\ram{K/\IQ_{q}}{i}$ with $i = q^{n-1}-1$.
Lemma \ref{lem:galoisprops}(ii) implies that
$\IQ_q(N)/K$ is unramified. 
Now $\psi$ is the unique lift of $\psi|_K$ to
$\IQ_q(N)$ that restricts to the identity on $L$.
By Lemma \ref{lem:lift}(iii) $\psi$ must lie in 
$\ram{\IQ_q(N)/\IQ_q}{i}$. In other words
\begin{equation*}
 \psi(\alpha)-\alpha \in \mathfrak{P}^{q^{n-1}}
\end{equation*}
where $\mathfrak{P}$
 is the maximal ideal of the ring of integers of $\IQ_q(N)$.
The ramification index of $\IQ_q(N)/\IQ_q$ is
 $e = (q-1)q^{n-1}$ by Lemmas \ref{lem:LT}(i) and \ref{lem:galoisprops}(ii).
Therefore, $(\psi(\alpha)-\alpha)^{q} \in \mathfrak{P}^{q^n}\subset
 \mathfrak{P}^e$. Since $p\in \mathfrak{P}^e$  we conclude
 \begin{equation*}
   0 \equiv (\psi(\alpha)-\alpha)^{q} \equiv \psi(\alpha^{q})-
\alpha^{q} \mod \mathfrak{P}^e.
 \end{equation*}
This leads to 
  $|\psi(\alpha)^{q}-\alpha^{q}|_p \le |p|_p = p^{-1}$.
Hence (\ref{eq:metric1}) holds true if $\alpha$ is an integer in $\IQ_q(N)$.

Deducing this lemma for non-integral $\alpha$ is done as in the
previous lemma. If
 $\alpha^{-1}$ is an integer in $KL$ then
 $|\psi(\alpha^{-1})^q - \alpha^{-q}|_p \le p^{-1}$.
The ultrametric triangle inequality implies
\begin{equation*}
  |\alpha^{-q}(\psi(\alpha)^{q} - \alpha^{q})|_p =
 |(\alpha^{-q} - \psi(\alpha^{-1})^{q}) \psi(\alpha)^q|_p
\le p^{-1} |\psi(\alpha)|_p^q 
\end{equation*}
and we immediately obtain (\ref{eq:metric1}).
\end{proof}

\section{Globalization and a First Lower Bound}
\label{sec:global}

We cease working over a local field and now suppose 
that $E$ is an elliptic curve defined over $\IQ$. Furthermore, 
 $p \ge 5$ is a fixed prime and $q=p^2$

We introduce two properties associated to   $E$
and $p$.

\begin{enumerate}
\item [(P1)] The elliptic curve
$E$ has good
supersingular reduction at $p$
and the $j$-invariant of this reduction is not among $\{0,1728\}$.
\item[(P2)] The natural Galois representation
  \begin{equation*}
    \gal{\IQbar/\IQ}\rightarrow \aut{E[p]}
  \end{equation*}
is surjective.
\end{enumerate}

Only the first property will play a role in the current section.
If it is satisfied, then  the results stated in Sections
\ref{sec:supersingular} and \ref{sec:localmetric} apply to $E$  
considered as an
elliptic curve   over the field $\IQ_q$.

Say $K$ is a Galois extension of $\IQ$ and let $v$ be a place of $K$. 
An automorphism $\sigma\in \gal{K/\IQ}$ determines an new place
$\sigma v$ of $K$ through
\begin{equation*}
  |\alpha|_{\sigma v} = |\sigma^{-1}(\alpha)|_v
\end{equation*}
for all $\alpha\in K$.

Let $N$ be a positive integer.
 In
addition to the convention 
(\ref{eq:easenotation1}) we also 
\begin{equation}
\label{eq:easenotation2}
\text{use the symbol $\IQ(N)$ to denote 
the field $\IQ(E[N])$}.
\end{equation}
The number field $\IQ(N)$ is a Galois extension of $\IQ$.

We now  get a height lower bound in the unmramified case $p\nmid N$.

\begin{lemma}
\label{lem:primetoptorsion}
Suppose $E$ and $p$ satisfy (P1).
We assume $p\nmid N$.
 If $\alpha\in\IQ(N)\ssm\mu_\infty$ is non-zero,
then
\begin{equation*}
  \height{\alpha}\ge \frac{\log(p/2)}{p^2+1}. 
\end{equation*}
\end{lemma}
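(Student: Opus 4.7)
The plan is to combine the local inequality of Lemma \ref{lem:metric2} with the product formula, applied to the auxiliary element $\beta = \alpha^q - \Phi(\alpha) \in K$, where $K = \IQ(N)$ and $\Phi \in \gal{K/\IQ}$ is a suitable global incarnation of the local Frobenius $\varphi_q$. To construct $\Phi$, fix an embedding $\iota : K \hookrightarrow \IQpbar$ inducing a place $v_0$ of $K$ above $p$; the completion $K_{v_0}$ is unramified over $\IQ_p$ (for instance because $K_{v_0} \subset \IQ_q(N)$, which is unramified over $\IQ_q$ by Lemma \ref{lem:unramified}), and since $K/\IQ$ is Galois the automorphism $\varphi_q = \varphi_p^2$ stabilizes $K_{v_0}$. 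Define $\Phi \in \gal{K/\IQ}$ as the pullback of $\varphi_q|_{K_{v_0}}$ along $\iota$.

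The crucial observation is that $\Phi$ is central in $\gal{K/\IQ}$. By Lemma \ref{lem:frobscalar} the Frobenius $\varphi_q$ acts on $E[N]$ as multiplication by $\pm p$, so the same is true for $\Phi$. Since $\gal{K/\IQ}$ embeds faithfully into $\aut{E[N]}$ and multiplication by a rational integer is central in $\aut{E[N]}$, the element $\Phi$ commutes with every $\sigma \in \gal{K/\IQ}$. Because $\Phi$ lies in the decomposition group at $v_0$ by construction, centrality then forces $\Phi$ to lie in the decomposition group of every place $v \mid p$; in particular $|\Phi(\alpha)|_v = |\alpha|_v$ at every such $v$.

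I next claim $\beta \neq 0$. Indeed, if $\Phi(\alpha) = \alpha^q$, iterating yields $\Phi^m(\alpha) = \alpha^{q^m}$ for all $m \ge 1$; taking $m$ to be the order of $\Phi$ forces $\alpha^{q^m - 1} = 1$, contradicting $\alpha \notin \mu_\infty$. Granted this, I would estimate $|\beta|_v$ place-by-place. At $v \mid p$, Lemma \ref{lem:metric2} applied through the embedding determined by $v$, combined with centrality of $\Phi$ to rewrite the Galois translate on the right-hand side in terms of $\alpha$ alone, gives
\begin{equation*}
|\beta|_v \le p^{-1}\max\{1,|\alpha|_v\}^{q+1}.
\end{equation*}
At $v \nmid p$ the triangle inequality yields $|\beta|_v \le c_v\max\{1,|\alpha|_v\}^{q}\max\{1,|\Phi(\alpha)|_v\}$ with $c_v = 2$ at archimedean places and $c_v = 1$ otherwise. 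Feeding these into the product formula $\sum_v d_v\log|\beta|_v = 0$, and noting that for every rational place $w$ the automorphism $\Phi$ permutes the places of $K$ above $w$ (so that $\sum_{v \mid w}d_v\log\max\{1,|\Phi(\alpha)|_v\} = \sum_{v \mid w}d_v\log\max\{1,|\alpha|_v\}$), the contributions of $\alpha$ and $\Phi(\alpha)$ combine to give
\begin{equation*}
0 \le -[K:\IQ]\log(p/2) + (p^2+1)\,[K:\IQ]\,\height{\alpha},
\end{equation*}
which rearranges to the claimed lower bound.

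The main obstacle is establishing centrality of $\Phi$ and the resulting fact that $\Phi$ fixes every place of $K$ above $p$; this is what allows the factor $\max\{1,|\Phi(\alpha)|_v\}$ on the right-hand side of Lemma \ref{lem:metric2} to be identified with $\max\{1,|\alpha|_v\}$, producing the sharp exponent $q+1 = p^2+1$ rather than something weaker. This is precisely where supersingularity enters through Lemma \ref{lem:frobscalar}, compensating for the fact that $\gal{K/\IQ}$ is typically far from abelian.
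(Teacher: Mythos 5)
Your argument is correct and follows essentially the same route as the paper: observe via Lemma \ref{lem:frobscalar} (i.e.\ supersingularity) that the lift of Frobenius squared acts as a rational scalar on $E[N]$ and is therefore central, apply Lemma \ref{lem:metric2} at the places above $p$ using centrality to move the Galois twist past the Frobenius, and finish with the product formula. The only cosmetic difference is that you absorb the factor $\max\{1,|\Phi(\alpha)|_v\}$ place-by-place using the permutation action of $\Phi$, whereas the paper keeps it intact and uses $\height{\varphi_q(\alpha)}=\height{\alpha}$ only at the very end; these are the same computation packaged differently.
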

\begin{proof}
We recall that all  number fields are taken to be  subfields of
$\IQpbar$. 

Suppose
  $\ell$ is a prime divisor of $N$ and $\ell^m|N$ with $m\in\IN$
but $\ell^{m+1}\nmid N$.  
Then $\ell\not=p$ by hypothesis. Lemma \ref{lem:frobscalar} implies that
 $\varphi_q$ acts on $E[\ell^m]$ as multiplication
by $a_q/2\in\IZ$.

Taking the sum of points leads 
to a isomorphism of a direct sum over all $E[\ell^m]$
with $\ell^m$ as above and $E[N]$.
This isomorphism is compatible with the action of the Galois group.
We deduce that $\varphi_q$ acts on  $E[N]$ as
multiplication by $a_q/2$.
So the restriction $\varphi_q|_{\IQ(N)}$, which we identify with
$\varphi_q$, 
lies in the center of $\gal{\IQ(N)/\IQ}$.


The restriction of $|\cdot|_p$ to $\IQ(N)$ is  a place 
 $v$.

We define $x  = \varphi_q(\alpha)-\alpha^q\in \IQ(N)$ and claim that
$x\not=0$. Indeed, otherwise we would have
$\height{\varphi_q(\alpha)} = \height{\alpha^q}$. 
Conjugating does not affect the height, so
$\height{\alpha}=\height{\alpha^q}=q\height{\alpha}$
and hence
 $\height{\alpha}=0$. Therefore $\alpha=0$ or
$\alpha\in\mu_\infty$ by Kronecker's Theorem. This contradicts our
assumption on $\alpha$.

Since $x\not=0$, the product formula implies
\begin{equation}
\label{eq:PF1}
   \sum_{w} d_w \log |x|_w=0
\end{equation}
where the sum is over all places of $\IQ(N)$.

Say $w$ is a finite place of $\IQ(N)$ above $p$. Then 
$w=\sigma^{-1}v$ for some $\sigma\in \gal{\IQ(N)/\IQ}$ because
the Galois group acts transitively on the places of $\IQ(N)$ above $p$. 
The fact that
$\varphi_q$ and $\sigma$ commute
gives the second equality in 
\begin{equation*}
|x|_w = 
  |\sigma(\varphi_q(\alpha))-\sigma(\alpha)^q|_{v} = 
|\varphi_q(\sigma(\alpha))-\sigma(\alpha)^q|_v.
\end{equation*}
Now we estimate the right-hand side from above using 
Lemma \ref{lem:metric2} applied to
$\sigma(\alpha)$
\begin{alignat}1
\label{eq:unrambound1}
  |x|_w &\le
  p^{-1}\max\{1,|\varphi_q(\sigma(\alpha))|_v\}\max\{1,|\sigma(\alpha)|_v\}^q
  \\
\nonumber
  &=
  p^{-1}\max\{1,|\sigma(\varphi_q(\alpha))|_v\}\max\{1,|\sigma(\alpha)|_v\}^q
  \\
\nonumber
&=  p^{-1}\max\{1,|\varphi_q(\alpha)|_w\}
\max\{1,|\alpha|_w\}^q.
\end{alignat}

If $w$ is an arbitrary finite place of $\IQ(N)$, the
 ultrametric triangle
inequality gives
\begin{equation}
\label{eq:unrambound2}
  |x|_w \le \max\{|\varphi_q(\alpha)|_w,|\alpha^q|_w\}
\le \max\{1,|\varphi_q(\alpha)|_w\}\max\{1,|\alpha|_w\}^q.
\end{equation}

Finally, if $w$ is an infinite place of $\IQ(N)$, the triangle inequality
implies
\begin{equation}
\label{eq:unrambound3}
  |x|_w \le 2\max\{|\varphi_q(\alpha)|_w,|\alpha^q|_w\}
\le 2\max\{1,|\varphi_q(\alpha)|_w\}\max\{1,|\alpha|_w\}^q.
\end{equation}

We apply the logarithm to the bounds
(\ref{eq:unrambound1}), (\ref{eq:unrambound2}), and
(\ref{eq:unrambound3}), take the sum over all places $w$ of $\IQ(N)$
 with 
multiplicities $d_w$, and use the product formula (\ref{eq:PF1}) to find
\begin{alignat*}1
0&=    \sum_{w|p}d_w\log |x|_w + 
\sum_{w\nmid \infty, w\nmid p} d_w
\log |x|_w +  \sum_{w|\infty}d_w\log |x|_w \\
&\le 
-\sum_{w|p}d_w\log p
+\sum_{w|\infty} d_w \log 2 
+\sum_{w} d_w\log(\max\{1,|\varphi_q(\alpha)|_w\}
\max\{1,|\alpha|_w\}^{q}).
\end{alignat*}
We divide this expression by $[\IQ(N):\IQ]$ and use
(\ref{eq:localdegrees})  together with the definition of the
height given in Section \ref{sec:height} to obtain
\begin{equation*}
0 \le 
   -\log p + \log 2+ \height{\varphi_q(\alpha)} + q\height{\alpha}.
\end{equation*}
Hence $\height{\varphi_q(\alpha)}+q\height{\alpha}\ge \log(p/2)$. 
The lemma  follows from $q=p^2$ and $\height{\varphi_q(\alpha)} =
\height{\alpha}$, one of our basic height properties.
\end{proof}

The remainder of the proof of Theorem \ref{thm:main} concerns the study of the more delicate
ramified case,  i.e. when  $p|N$. 
Instead of working with a lift of the Frobenius automorphism, we use
an element in a higher ramification group. 
The next lemma addresses the issue
that ramification groups  need not lie in the center of the global 
Galois group.

\begin{lemma}
\label{lem:largeorbit}
Suppose $E$ and $p$ satisfy (P1). We assume $p|N$. Suppose
$\psi \in \gal{\IQ_q(N)/\IQ_q(N/p)}$ 
which we identify 
with its restriction to $\IQ(N)$. If
\begin{equation*}
  G = \{\sigma\in \gal{\IQ(N)/\IQ};\,\,
  \sigma\psi\sigma^{-1}=\psi\}
\end{equation*}
is its centralizer, then
\begin{equation*}
  \# G v \ge \frac{1}{p^4}\frac{[\IQ(N):\IQ]}{d_v}
\end{equation*}
where $v$ is the place of $\IQ(N)$ induced by $|\cdot |_p$. 
\end{lemma}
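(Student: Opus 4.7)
The plan is to reformulate the desired orbit-size inequality, via orbit-stabilizer, as a bound on the size of the $\Gamma$-conjugacy class of $\psi$ (with $\Gamma = \gal{\IQ(N)/\IQ}$), and then to verify that bound by examining how $\psi$ and its $\Gamma$-conjugates act on the torsion subgroup $E[N]$.

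By orbit-stabilizer, $\#Gv = \#G/\#(G \cap D_v)$, where $D_v \subset \Gamma$ denotes the decomposition group at $v$. The total number of places of $\IQ(N)$ above $p$ equals $\#\Gamma/\#D_v = [\IQ(N):\IQ]/d_v$, and the trivial estimate $\#(G\cap D_v) \le \#D_v = d_v$ reduces the target inequality to the single assertion $[\Gamma:G] \le p^4$, that is, to an upper bound on the $\Gamma$-conjugacy class of $\psi$.

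Since $\psi$ fixes $\IQ_q(N/p)$ pointwise, it fixes $\IQ(N/p)$ and hence $E[N/p]$ pointwise. For any $\sigma\in\Gamma$ the automorphism $\sigma$ permutes $E[N/p]$, so $\sigma\psi\sigma^{-1}$ still fixes $E[N/p]$ pointwise. Therefore the entire $\Gamma$-conjugacy class of $\psi$ lies in the normal subgroup $\gal{\IQ(N)/\IQ(N/p)}$ of $\Gamma$, giving $[\Gamma:G] \le \#\gal{\IQ(N)/\IQ(N/p)}$.

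Via the faithful action of $\Gamma$ on $E[N]$, the subgroup $\gal{\IQ(N)/\IQ(N/p)}$ embeds into the kernel of the reduction map $\aut{E[N]} \to \aut{E[N/p]}$. Writing $N = p^n M$ with $p\nmid M$ and $n\ge 1$, and using the CRT decomposition $\aut{E[N]} \cong \gl{\IZ/p^n\IZ}{2}\times \gl{\IZ/M\IZ}{2}$, this kernel is identified with the matrices in $\gl{\IZ/p^n\IZ}{2}$ that reduce to the identity modulo $p^{n-1}$. For $n\ge 2$ such a matrix has the form $I + p^{n-1}B$ with $B\in\mat{\IF_p}{2}$, which gives exactly $p^4$ elements; for $n=1$ the kernel is all of $\gl{\IF_p}{2}$, of order $(p^2-1)(p^2-p) < p^4$. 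In either case $\#\gal{\IQ(N)/\IQ(N/p)} \le p^4$, so $[\Gamma:G] \le p^4$ and the lemma follows. The only real step is the translation into a conjugacy-class bound; the remainder is elementary group theory combined with the routine matrix computation just outlined.
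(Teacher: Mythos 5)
Your proof is correct and follows essentially the same strategy as the paper's: both reduce the orbit bound to showing that the $\Gamma$-conjugacy class of $\psi$ has at most $p^4$ elements, observe that this conjugacy class lies in the normal subgroup $\gal{\IQ(N)/\IQ(N/p)}$, and then bound that subgroup by $p^4$ via the faithful representation into $\gl{\IZ/N\IZ}{2}$. The only cosmetic differences are that you route through the decomposition group $D_v$ explicitly (where the paper simply writes $\#Gv \ge \#G/d_v$ after bounding $\#G$ from below) and that you split off the $n=1$ and $n\ge 2$ cases in the matrix count, whereas the paper covers both uniformly by noting that the image lands in $1 + (N/p)\mat{\IZ/N\IZ}{2}$, a set of size $p^4$.
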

\begin{proof}
We define the normal subgroup
\begin{equation*}
 H = 
\gal{\IQ(N)/\IQ(N/p)}.
\end{equation*}
 of $\gal{\IQ(N)/\IQ}$; it contains
 $\psi$. 

We fix an isomorphism between $E[N]\cong
 (\IZ/N\IZ)^2$ allowing us to represent an
 automorphism of $E[N]$  
 by an element
of $\gl{\IZ/N\IZ}{2}$. 
An automorphism of $E[N]$ acting trivially on $E[N/p]$ 
is represented by an element of 
$1+N/p \mat{\IZ/N\IZ}{2}$. Since the representation
$\gal{\IQ(N)/\IQ}\rightarrow
\gl{\IZ/N\IZ}{2}$ is injective, we have
\begin{equation}
\label{eq:Hbound}
  \# H \le p^4.
\end{equation}

The orbit of $\psi$ under the action of conjugation 
by $\gal{\IQ(N)/\IQ}$ is contained in the normal
subgroup $H$.
The stabilizer of $\psi$ under this action is 
the centralizer $G$
 from the assertion. 
So we may bound
\begin{equation}
\label{eq:Gbound}
  \#G \ge \frac{[\IQ(N):\IQ]}{\#H} \ge \frac{[\IQ(N):\IQ]}{p^4}
\end{equation}
using (\ref{eq:Hbound}).

Restricting  $|\cdot|_p$ determines a
place $v$ of $\IQ(N)$ lying 
 above $p$.
The Galois group $\gal{\IQ(N)/\IQ}$ acts transitively on all places of
$\IQ(N)$ lying above $p$ and the
 total number of such places
is
\begin{equation*}
  \frac{[\IQ(N):\IQ]}{d_v} = \frac{[\IQ(N):\IQ]}{[\IQ_p(N):\IQ_p]}.
\end{equation*}
So the orbit $Gv$ of $v$ under the action of the group $G$
has cardinality
\begin{alignat*}1
  \# G v
&\ge \frac{1}{[\gal{\IQ(N)/\IQ}:G]}
\frac{[\IQ(N):\IQ]}{d_v}
\ge
\frac{1}{p^4}
\frac{[\IQ(N):\IQ]}{ d_v}.
\end{alignat*}
by (\ref{eq:Gbound}).
\end{proof}


At first we will only get a weak height inequality which holds for
algebraic numbers satisfying a different condition than in Theorem
\ref{thm:main}. We recall that the expression $Q(n)$ was
defined in (\ref{eq:defineQn}).

\begin{lemma}
\label{lem:ptorsion}
Suppose $E$ and $p$ satisfy (P1). We assume $p|N$ and
let $n\ge 1$ be the greatest
integer with $p^n|N$.
If  $\alpha\in\IQ(N)$
 satisfies 
 $\alpha^{Q(n)} \not\in \IQ_q(N/p)$, there
 exists a non-zero
$\beta\in \IQbar \ssm\mu_\infty$ with 
$\height{\beta}\le 2p^4\height{\alpha}$ and
  \begin{equation}
\label{eq:ptorsionineq}
\height{\alpha} + 
\max\left\{0,\frac{1}{[\IQ(\beta):\IQ]}
\sum_{\tau}
\log |\tau(\beta)-1|\right\}
\ge \frac{\log p}{2p^8}
  \end{equation}
where the sum runs over all field embeddings $\tau : \IQ(\beta)\rightarrow \IC$.
\end{lemma}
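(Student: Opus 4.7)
The plan is to choose a Galois automorphism
$\psi\in\gal{\IQ_q(N)/\IQ_q(N/p)}$ that non-trivially moves
$\alpha^{Q(n)}$, set $\beta=(\psi(\alpha)/\alpha)^{Q(n)}\in\IQ(N)$,
and apply the product formula to $\beta-1$. At the places of
$\IQ(N)$ in the orbit of $v=|\cdot|_{p}$ under the centralizer
of~$\psi$ the local estimate of Lemma~\ref{lem:metric1} will produce
the crucial factor $p^{-1}$; at the remaining finite places the
ultrametric inequality suffices; and the archimedean places will
contribute precisely the sum $\sum_{\tau}\log|\tau(\beta)-1|$
appearing in the conclusion.

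First I verify that $\beta$ has the required properties. The
hypothesis $\alpha^{Q(n)}\notin\IQ_q(N/p)$ forces $\alpha\ne 0$ and
produces some $\psi$ with $\psi(\alpha^{Q(n)})\ne\alpha^{Q(n)}$, so
$\beta\ne 0$ and $\beta\ne 1$. If $\beta$ were a root of unity then
so would be $\beta_{0}:=\psi(\alpha)/\alpha$, and
Lemma~\ref{lem:qpower} would place $\beta_{0}$ in $\mu_{Q(n)}$,
forcing $\beta=\beta_{0}^{Q(n)}=1$ and contradicting $\beta\ne 1$;
hence $\beta\in\IQbar\ssm\mu_{\infty}$. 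The basic height properties
give $\height{\beta}=Q(n)\height{\beta_{0}}\le
2Q(n)\height{\alpha}\le 2p^{4}\height{\alpha}$ since $Q(n)\le p^{4}$.

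Next I would prove the local bound
$|\beta-1|_{w}\le p^{-1}\max\{|\alpha|_{w},|\alpha|_{w}^{-1}\}^{q}$ at
every place $w\in Gv$, where $G$ is the centralizer from
Lemma~\ref{lem:largeorbit}. Writing $w=\sigma^{-1}v$ with
$\sigma\in G$, the relation $\sigma\psi=\psi\sigma$ forces $\psi w=w$
and $|\psi(\alpha)|_{w}=|\alpha|_{w}$, and reduces the estimate on
$|\psi(\alpha)^{q}-\alpha^{q}|_{w}$ to Lemma~\ref{lem:metric1} applied
to $\sigma(\alpha)$. For $n\ge 2$, where $Q(n)=q$, the claim follows
directly from $\beta-1=(\psi(\alpha)^{q}-\alpha^{q})/\alpha^{q}$ after
distinguishing $|\alpha|_{w}\ge 1$ from $|\alpha|_{w}<1$. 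For $n=1$, I
would factor
$\psi(\alpha)^{(q-1)q}-\alpha^{(q-1)q}=A^{q-1}-B^{q-1}$ with
$A=\psi(\alpha)^{q}$ and $B=\alpha^{q}$, and estimate the companion
factor $\sum_{i=0}^{q-2}A^{i}B^{q-2-i}$ ultrametrically using
$|A|_{w}=|B|_{w}=|\alpha|_{w}^{q}$; the exponents combine to give the
same bound.

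Finally I apply the product formula
$\sum_{w}d_{w}\log|\beta-1|_{w}=0$ over the places of $\IQ(N)$, use
the above bound at the places of $Gv$ (whose weighted total is at
least $[\IQ(N):\IQ]/p^{4}$ by Lemma~\ref{lem:largeorbit}), bound
$|\beta-1|_{w}\le\max\{|\beta|_{w},1\}$ at the remaining finite
places, and keep the archimedean contribution. Using
$\sum_{w}d_{w}|\log|\alpha|_{w}|=2[\IQ(N):\IQ]\height{\alpha}$,
$\sum_{w}d_{w}\log\max\{|\beta|_{w},1\}=[\IQ(N):\IQ]\height{\beta}$,
and the compatibility
$\frac{1}{[\IQ(N):\IQ]}\sum_{w\mid\infty}d_{w}\log|\beta-1|_{w}=
\frac{1}{[\IQ(\beta):\IQ]}\sum_{\tau}\log|\tau(\beta)-1|=:A_{\beta}$,
this yields
\begin{equation*}
\frac{\log p}{p^{4}}\le 2q\,\height{\alpha}+\height{\beta}+A_{\beta}.
\end{equation*}
Substituting $\height{\beta}\le 2Q(n)\height{\alpha}$ and noting that
$2(q+Q(n))\le 2p^{4}$ (with equality at $n=1$, since
$q+(q-1)q=q^{2}=p^{4}$) gives
$\log p/(2p^{8})\le\height{\alpha}+A_{\beta}/(2p^{4})$, and a case
split on the sign of $A_{\beta}$ (using $A_{\beta}/(2p^{4})\le
A_{\beta}$ when $A_{\beta}\ge 0$, and discarding $A_{\beta}$
otherwise) yields~(\ref{eq:ptorsionineq}). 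The main obstacle is the
$n=1$ case of the local bound, where Lemma~\ref{lem:metric1} only
controls $q$-th powers and must be transferred to $Q(n)$-th powers
through the factorisation above.
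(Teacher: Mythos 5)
Your proof is correct and follows the same overall architecture as the paper: choose $\psi$ testifying the hypothesis, form $\beta=\psi(\alpha^{Q(n)})/\alpha^{Q(n)}$, use Lemma~\ref{lem:qpower} to rule out $\beta$ being a root of unity, invoke Lemma~\ref{lem:metric1} at places in the orbit $Gv$ of the centralizer (Lemma~\ref{lem:largeorbit}), and close with the product formula. The one genuine difference is how you handle the mismatch between the exponent $q$ in Lemma~\ref{lem:metric1} and the exponent $Q(n)$ in the hypothesis when $n=1$ and $Q(1)=(q-1)q\neq q$. The paper's proof exploits the divisibility $q\mid Q(n)$ by applying Lemma~\ref{lem:metric1} not to $\sigma^{-1}(\alpha)$ but to $\sigma^{-1}(\alpha)^{Q(n)/q}$, which immediately produces the estimate in the exponent $Q(n)$ with no case split; this is in fact the reason the paper defines $Q(1)=(q-1)q$ rather than the sharper $(q-1)p$ (see the remark following Lemma~\ref{lem:qpower}). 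You instead factor $A^{q-1}-B^{q-1}=(A-B)\sum_{i}A^iB^{q-2-i}$ with $A=\psi(\alpha)^q$, $B=\alpha^q$, and estimate the companion factor ultrametrically. Both routes are valid and give the same final constant, since both ultimately bound the relevant exponent by $p^4$. Your factoring argument is slightly more hands-on and requires the $|\alpha|_w\gtrless 1$ case distinction, whereas the paper's substitution $\alpha\mapsto\alpha^{Q/q}$ handles $n=1$ and $n\ge 2$ uniformly. The remaining differences (applying the product formula to $\beta-1$ rather than to $x=\psi(\alpha^{Q})-\alpha^{Q}$, and the resulting constant $2(q+Q)$ rather than $2Q$ before the common bound by $2p^4$) are purely cosmetic, since $x$ and $\beta-1$ differ by the factor $\alpha^{Q}$ whose total logarithmic contribution vanishes by the product formula.
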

\begin{proof}
For brevity, we set $Q=Q(n)$.
By hypothesis we may choose $\psi\in \gal{\IQ_q(N)/ \IQ_q(N/p)}$
with $\psi(\alpha^{Q})\not=\alpha^{Q}$. We note that
$\alpha\not=0$.

We define 
\begin{equation*}
  x = \psi(\alpha^{Q})-\alpha^{Q} \in  \IQ(N)
\end{equation*}
and observe $x\not=0$ by our choice of $\psi$.
So
\begin{equation}
\label{eq:PF2}
    \sum_{w} d_w \log |x|_w =0
\end{equation}
by the product formula.

Say $G$ and $v$ are as in Lemma \ref{lem:largeorbit}. 
Let $\sigma\in G$. The place $\sigma v$  of $\IQ(N)$
satisfies
 $|\sigma(y)|_{\sigma v} = |y|_v$
for all $y\in \IQ(N)$. 
So 
$|(\sigma\psi\sigma^{-1})(\alpha^{Q}) - \alpha^{Q}|_{\sigma v}=
 |\psi(\sigma^{-1}(\alpha)^{Q})-\sigma^{-1}(\alpha)^{Q}|_v$.
By definition we have $q|Q$, so we may apply
  Lemma \ref{lem:metric1} to $\sigma^{-1}(\alpha)^{Q/q}$.
This yields
\begin{alignat*}1
|(\sigma\psi\sigma^{-1})(\alpha^{Q}) - \alpha^{Q}|_{\sigma v}
&\le p^{-1}
\max\{1,|\psi(\sigma^{-1}(\alpha))|_v\}^{Q}\max\{1,|\sigma^{-1}(\alpha)|_v\}^{Q}\\
&\le p^{-1}
\max\{1,|(\sigma\psi\sigma^{-1})(\alpha))|_{\sigma v}\}^{Q}
\max\{1,|\alpha|_{\sigma v}\}^{Q}
\end{alignat*}
Now $\sigma\psi\sigma^{-1}=\psi$ since $\sigma\in G$. Therefore, 
\begin{equation}
\label{eq:sigmainG}
  |x|_{w} \le p^{-1} \max\{1,|\psi(\alpha)|_{w}\}^{Q}
\max\{1,|\alpha|_{w}\}^{Q}
\quad\text{for all}\quad w\in Gv.
\end{equation}

If $w$ is an arbitrary finite place of $\IQ(N)$,  the
ultrametric triangle inequality implies
\begin{equation}
\label{eq:finiteplace}
  |x|_w \le \max\{1,|\psi(\alpha)|_w\}^Q
\max\{1,|\alpha|_w\}^Q.
\end{equation}

Say $w$ is an infinite place. Applying the triangle inequality as
 for example in (\ref{eq:unrambound3}) to bound $|x|_w$ would lead
to a ruinous factor $2$. 
Instead we define
\begin{equation*}
  \beta = \frac{\psi(\alpha^Q)}{\alpha^Q} \in \IQbar\ssm\{1\}
\end{equation*}
and content ourselves by bounding
\begin{equation}
\label{eq:infiniteplace}
  |x|_w = \left| \beta - 1\right|_w
  |\alpha|_w^Q \le 
\left| \beta - 1\right|_w
\max\{1,|\alpha|_w\}^Q.
\end{equation}

We split the sum (\ref{eq:PF2}) up into the  finite
places in $G v$, the remaining finite places,  and the infinite places. 
The estimates (\ref{eq:sigmainG}), (\ref{eq:finiteplace}), and
(\ref{eq:infiniteplace})  together with the product formula
(\ref{eq:PF2}) yield
\begin{alignat}1
\label{eq:prodformula}
  0 \le 
&\sum_{w \in G v}
d_w \log(p^{-1}) \\
\nonumber
&+
\sum_{w\nmid \infty}
d_w Q \log(\max\{1,|\psi(\alpha)|_w\}\max\{1,|\alpha|_w\}) \\
\nonumber
&+ \sum_{w|\infty}
d_w \left(\log \left|\beta-1\right|_{w}
+Q\log\max\{1,|\alpha|_w\}\right).
\end{alignat}

Moreover,  all local degrees $d_w$ with $w \in Gv$ equal $d_v$. 
So the sum $\sum_{w\in Gv} d_w \log(p^{-1}) =d_v \log(p^{-1})  \#Gv $ is at most  $-[\IQ(N):\IQ](\log
p)/p^4$ by Lemma \ref{lem:largeorbit}.
We use this estimate together with  (\ref{eq:localdegrees}) and (\ref{eq:prodformula}) 
to obtain
\begin{equation*}
  0 \le
-\frac{\log p}{p^4}
+ \frac{1}{[\IQ(N):\IQ]}\left(\sum_{w|\infty}
d_w
\log\left|\beta-1\right|_w\right)
 + Q\height{\psi(\alpha)} + Q\height{\alpha}
\end{equation*}
after dividing by $[\IQ(N):\IQ]$.
The normalized sum over the infinite places 
is the normalized sum over the field embeddings found in
(\ref{eq:ptorsionineq}).

Inequality (\ref{eq:ptorsionineq}) follows from
$\height{\psi(\alpha)}=\height{\alpha}$ and $Q\le p^4$. 
Basic height properties
yield $\height{\beta}\le \height{\psi(\alpha^Q)}
+\height{\alpha^Q}\le 2Q \height{\alpha} \le 2p^4 \height{\alpha}$.

By construction we certainly have $\beta\not=0,1$ and it remains to show
that $\beta$ is not a root of unity. 
If we assume the contrary, then
$\psi(\alpha)/\alpha$ is a root of unity too. 
 Lemma \ref{lem:qpower}  implies
$(\psi(\alpha)/\alpha)^Q=1$, but this contradicts the choice of $\psi$.
\end{proof}

\section{Descending Along $p^n$-Torsion}
\label{sec:descending}
Let $E$ be an elliptic curve defined over $\IQ$
and let $p\ge 5$ be a prime 
with $q=p^2$. We recall the conventions
(\ref{eq:easenotation1}) and (\ref{eq:easenotation2}).

Lemma \ref{lem:localdescent} below is our main tool in the descent argument. 
Given an element in $\IQ(p^nM)$ it allows us to decrease $n$ 
 under certain
circumstances and work in the smaller field $\IQ(p^{n-1}M)$.
The proof involves the  group theory of $\gl{\IF_p}{2}$.
 We thus begin by  recalling some  
facts and  by proving a technical lemma. 

We identify  $\IF_p$ with the scalar matrices in $\mat{\IF_p}{2}$
and consider the latter as an $\IF_p$-algebra.
A subgroup of $\gl{\IF_p}{2}$ is called a non-split Cartan subgroup if it
is the multiplicative group of an $\IF_p$-subalgebra of
$\mat{\IF_p}{2}$
that is a field with $q$ elements.
A non-split Cartan subgroup is cyclic of  order $q-1$.

Conversely, if $G\subset\gl{\IF_p}{2}$ is a cyclic subgroup of order
$q-1$, then it is a non-split Cartan subgroup. Indeed, if $\theta$ is a
generator,  then the Theorem of Cayley-Hamilton implies
that $G$ is contained in the commutative
 $\IF_p$-subalgebra $\IF_p + \IF_p \theta \subset \mat{\IF_p}{2}$.
Now $\theta\not\in\IF_p$, so
counting elements yields
$G = (\IF_p+\IF_p\theta)\ssm\{0\}$ and hence $G$ is a 
non-split Cartan subgroup.


\begin{lemma}
  \label{lem:grouplemma}
Let $G$ be a non-split Cartan subgroup
of $ \gl{\IF_p}{2}$. The set 
\begin{equation}
\label{eq:generators}
  \{ hgh^{-1};\,\, g\in G\text{ and }h\in \gl{\IF_p}{2}\}. 
\end{equation}
has cardinality strictly greater than $p^3$ and  generates $\gl{\IF_p}{2}$
as a group.
\end{lemma}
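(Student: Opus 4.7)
The plan is to treat the cardinality claim by a straightforward counting argument after fixing the model for a non-split Cartan subgroup, and to treat the generation claim via the near-simplicity of $\gl{\IF_p}{2}$ when $p \geq 5$ (which is the standing hypothesis in Section \ref{sec:supersingular}).

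For the cardinality, I would first compute the normalizer $N = N_{\gl{\IF_p}{2}}(G)$. Write $G = A^\times$ where $A \subset \mat{\IF_p}{2}$ is the $\IF_p$-subalgebra isomorphic to $\IF_q$. Conjugation by $N$ preserves $A$ and acts on it by $\IF_p$-algebra automorphisms, yielding an exact sequence
\begin{equation*}
  1 \to G \to N \to \gal{\IF_q/\IF_p} \to 1,
\end{equation*}
whose right map is surjective (exhibit an explicit matrix realizing Frobenius on $A$). Thus $|N| = 2(q-1)$ and the number of distinct $\gl{\IF_p}{2}$-conjugates of $G$ is $|\gl{\IF_p}{2}|/|N| = p(p-1)/2$. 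Next I would show that any two distinct conjugates of $G$ meet exactly in the scalar subgroup $\IF_p^\times$: if $x$ lies in both and is non-scalar, then the commutative $\IF_p$-algebra $\IF_p[x]$ has dimension $2$ and is contained in both Cartan algebras, forcing them to coincide. Inclusion-exclusion then gives the cardinality of the set (\ref{eq:generators}) as
\begin{equation*}
  (p-1) + \frac{p(p-1)}{2}\bigl((q-1)-(p-1)\bigr) = (p-1) + \frac{p^2(p-1)^2}{2},
\end{equation*}
and a direct estimate shows this exceeds $p^3$ for $p \geq 5$.

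For the generation claim, let $H \le \gl{\IF_p}{2}$ be the subgroup generated by (\ref{eq:generators}). By construction $H$ is stable under $\gl{\IF_p}{2}$-conjugation, hence normal in $\gl{\IF_p}{2}$. The elements of $G$ of determinant $1$ form a cyclic subgroup of order $p+1$ (the kernel of the norm $\IF_q^\times \to \IF_p^\times$), so $H \cap \Sl{\IF_p}{2}$ is a normal subgroup of $\Sl{\IF_p}{2}$ whose image in $\mathrm{PSL}_2(\IF_p)$ is non-trivial since $p + 1 > 2$. The simplicity of $\mathrm{PSL}_2(\IF_p)$ for $p \geq 5$ forces this image to be all of $\mathrm{PSL}_2(\IF_p)$; together with $-I \in G \subseteq H$ this yields $H \supseteq \Sl{\IF_p}{2}$. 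Finally, $\det$ restricted to $G \cong \IF_q^\times$ is the norm $\IF_q^\times \to \IF_p^\times$, which is surjective, so $\det(H) = \IF_p^\times$ and therefore $H = \gl{\IF_p}{2}$.

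The main bookkeeping step is the normalizer computation together with the pairwise intersection analysis of non-split Cartan subgroups; once those are in place, both claims follow quickly, the generation claim from the standard short list of normal subgroups of $\gl{\IF_p}{2}$ for $p \geq 5$.
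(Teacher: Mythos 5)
Your proof is correct. The cardinality argument is essentially the same as the paper's: you compute the normalizer order $2(q-1)$, count the $p(p-1)/2$ conjugates, and observe that two distinct non-split Cartan subgroups intersect only in the scalars. Your inclusion--exclusion count $(p-1) + \tfrac{p^2(p-1)^2}{2}$ is actually slightly sharper than the paper's bound $\tfrac{p^2(p-1)^2}{2}$ (the paper simply discards the $p-1$ scalars), but both exceed $p^3$ for $p\geq 5$.

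For the generation claim you take a genuinely different route. The paper notes that the generated subgroup $H$ contains the non-split Cartan $G$ and then invokes Serre's Proposition 17 from \cite{Serre:Galois}: any subgroup of $\gl{\IF_p}{2}$ containing a non-split Cartan either is all of $\gl{\IF_p}{2}$ or has order at most $p(p-1)^2$, and the cardinality estimate rules out the second alternative. Your approach instead observes that $H$ is normal (being generated by a union of conjugacy classes), intersects with $\Sl{\IF_p}{2}$ to get a normal subgroup with non-trivial image in $\mathrm{PSL}_2(\IF_p)$, and appeals to simplicity of $\mathrm{PSL}_2(\IF_p)$ for $p\geq 5$, finishing with $-I\in G$ and surjectivity of the norm. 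Your argument is more self-contained (no need for the classification of subgroups containing a non-split Cartan) and it decouples the two claims of the lemma: in the paper the generation claim depends on the cardinality estimate, while in your proof the generation claim stands entirely on its own. The trade-off is that you invoke simplicity of $\mathrm{PSL}_2(\IF_p)$, which is also a nontrivial input, though a more standard one than Serre's Proposition 17.
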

\begin{proof}
The normalizer of $G$ has cardinality
$2(q-1)$ by Section 2.2 \cite{Serre:Galois}. 
Therefore, the orbit of $G$ under the action of $\gl{\IF_p}{2}$
 by conjugation has
cardinality
$\# \gl{\IF_p}{2}/(2(q-1))$.
Conjugating  $G$ by an element of 
$\gl{\IF_p}{2}$ gives again a non-split Cartan subgroup. 
If $G'$ is a conjugate distinct from $G$, then
$0\cup (G\cap G')$ is an $\IF_p$-subalgebra of $\{0\}\cup G$
and $\{0\}\cup G'$ that has cardinality strictly less than $q$. 
So  $\{0\}\cup (G\cap G')$ has cardinality $p$ since it 
contains the scalar matrices.

The set (\ref{eq:generators}) equals the union of all elements in the
orbit of $G$.
Each orbit element contributes at least $q-p$ elements. So the 
 cardinality of (\ref{eq:generators}) is at least
\begin{equation}
\label{eq:cardestimate}
   \frac{q-p}{2(q-1)} \# \gl{\IF_p}{2}= 
\frac{(p-1)^2 p^2}{2} > p^3
\end{equation}
since $p\ge 5$.
 
 The subgroup of $\gl{\IF_p}{2}$
 generated by (\ref{eq:generators}) contains the non-split Cartan
 subgroup $G$. By Serre's Proposition 17 \cite{Serre:Galois}
it is either $\gl{\IF_p}{2}$ or has cardinality at most $p(p-1)^2$.  But the second alternative is impossible because of
(\ref{eq:cardestimate}). 
\end{proof}

Let $N\in\IN$ with $N=p^nM$ where $n\ge 0$ and $M\ge 1$ are integers
and $p\nmid M$. 
We recall the convention made in Section \ref{ssec:local} and
  identify  $\gal{\IQ_q(N)/\IQ_q}$ with a subgroup 
of $\gal{\IQ(N)/\IQ}$.

\begin{lemma}
\label{lem:localdescent}
Suppose $E$ and  $p$ satisfy (P1) and (P2).
We assume  $p|N$.
\begin{enumerate}
\item [(i)]
The subgroup of $\gal{\IQ(N)/\IQ}$ generated by the conjugates
of $\gal{\IQ_q(N)/\IQ_q(N/p)}$ equals
$\gal{\IQ(N)/\IQ(N/p)}$.
\item [(ii)]
If $\alpha\in\IQ(N)$
with  $\sigma(\alpha) \in \IQ_q(N/p)$ for all 
$\sigma\in\gal{\IQ(N)/\IQ}$, then 
$\alpha\in \IQ(N/p)$. 
\end{enumerate}
\end{lemma}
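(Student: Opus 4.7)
The plan is to prove (i) by analyzing the faithful Galois representation $\rho\colon \gal{\IQ(N)/\IQ}\hookrightarrow \aut{E[N]} = \gl{\IZ/N\IZ}{2}$, and then to deduce (ii) in a few lines. Set $J = \gal{\IQ_q(N)/\IQ_q(N/p)}$ and $R = \gal{\IQ(N)/\IQ(N/p)}$. The inclusion $H\subset R$ is immediate: $J$ fixes $\IQ_q(N/p)$, hence also fixes $\IQ(N/p)\subset \IQ_q(N/p)$, so $J\subset R$; since $\IQ(N/p)/\IQ$ is Galois, $R$ is normal in $\gal{\IQ(N)/\IQ}$ and thus contains the subgroup generated by all conjugates of $J$.

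For the reverse inclusion $R\subset H$ I work inside $\rho$. The image $\rho(R)$ lies in $K := \ker(\gl{\IZ/N\IZ}{2}\to\gl{\IZ/(N/p)\IZ}{2})$, and because $\rho$ is injective and $H\subset R$ it suffices to show $\rho(H)\supset K$. Let $n\ge 1$ be the largest integer with $p^n\mid N$ and split according to Lemma~\ref{lem:galoisprops}(iv). When $n=1$, $K$ is identified with $\gl{\IF_p}{2}$ acting on $E[p]$ (and trivially on $E[M]$); the Lubin--Tate computation in the proof of Lemma~\ref{lem:LT}, combined with the restriction isomorphism $J\cong \gal{\IQ_q(p)/\IQ_q}$ of Lemma~\ref{lem:galoisprops}(iii), shows that $\rho(J)$ is the non-split Cartan subgroup $\IF_q^\times\subset\gl{\IF_p}{2}$. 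Property (P2) makes the representation $\gal{\IQ(N)/\IQ}\to\gl{\IF_p}{2}$ on $E[p]$ surjective, so every $\gl{\IF_p}{2}$-conjugate of $\rho(J)$ appears among the $\gal{\IQ(N)/\IQ}$-conjugates, and Lemma~\ref{lem:grouplemma} then gives $\rho(H)=\gl{\IF_p}{2}=K$. When $n\ge 2$, $K$ is abelian and the map $I+p^{n-1}A\mapsto A$ identifies it with the additive group $\mat{\IF_p}{2}$; the same Lubin--Tate description, applied to $\gal{\IQ_q(p^n)/\IQ_q(p^{n-1})}\cong \IZ_q^{(n-1)}/\IZ_q^{(n)}$, identifies $\rho(J)$ with the image of the regular representation $\IF_q\hookrightarrow \mat{\IF_p}{2}$, a two-dimensional $\IF_p$-subspace containing both $I$ and a non-scalar element. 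Conjugation by $\gal{\IQ(N)/\IQ}$ on $K$ corresponds under this identification to the adjoint action of $\gl{\IF_p}{2}$ on $\mat{\IF_p}{2}$ (again using (P2)). Since for $p\ge 5$ the only $\gl{\IF_p}{2}$-invariant subspaces of $\mat{\IF_p}{2}$ are $\{0\}$, $\IF_p\cdot I$, $\mathfrak{sl}_2$, and $\mat{\IF_p}{2}$, and $\rho(J)$ lies in none of the proper ones, its $\gl{\IF_p}{2}$-translates $\IF_p$-span all of $\mat{\IF_p}{2}$; as $K$ is abelian, this additive span agrees with the subgroup generated, so again $\rho(H)=K$.

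Given (i), part (ii) is a short conjugation argument: if $\sigma(\alpha)\in \IQ_q(N/p)$ for every $\sigma\in\gal{\IQ(N)/\IQ}$, then each $\tau\in J$ fixes $\sigma(\alpha)$, so $\sigma^{-1}\tau\sigma(\alpha)=\alpha$; as $(\sigma,\tau)$ varies these elements exhaust all conjugates of elements of $J$, which by (i) generate $R$, so $\alpha\in \IQ(N)^R=\IQ(N/p)$. The main obstacle is inside (i), in the precise identification of $\rho(J)$ through Lubin--Tate theory: for $n=1$ one needs $\rho(J)$ to be a \emph{full} non-split Cartan in order to invoke Lemma~\ref{lem:grouplemma}, and for $n\ge 2$ one needs that $\rho(J)$ contains a non-scalar element (not merely scalar matrices) so that the adjoint-representation argument forces the $\gl{\IF_p}{2}$-span to be all of $\mat{\IF_p}{2}$ rather than stopping at $\IF_p\cdot I$ or $\mathfrak{sl}_2$.
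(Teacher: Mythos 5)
Your proof is correct, and part~(ii) together with the $n=1$ branch of part~(i) follow the paper's own reasoning: $G=\gal{\IQ_q(N)/\IQ_q(N/p)}$ is cyclic of order $q-1$, hence its image under $\rho$ is a non-split Cartan, and Lemma~\ref{lem:grouplemma} with property (P2) finishes. Where you genuinely diverge is the case $n\ge 2$. The paper works with the logarithm $\mathcal L\colon \gal{\IQ(N)/\IQ(N/p)}\to\mat{\IF_p}{2}$, proves that $\mathcal L(G)=\IF_p+\IF_p\theta$ is a commutative $\IF_p$-algebra, and then invests a separate argument (using the commutativity of $\gal{\IQ_q(N)/\IQ_q(M)}$) to show that $\theta$ has no $\IF_p$-eigenvalue, so that $\mathcal L(G)^\times$ is again a non-split Cartan and Lemma~\ref{lem:grouplemma} can be invoked a second time; a cardinality count then forces equality. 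You instead observe that, under the adjoint action of $\gl{\IF_p}{2}$ (realized through the global representation via (P2), exactly as in the paper's identity~(\ref{eq:ABaction})), the only invariant $\IF_p$-subspaces of $\mat{\IF_p}{2}$ are $\{0\}$, $\IF_p\cdot I$, $\mathfrak{sl}_2$, and the whole space; since $\mathcal L(G)$ is two-dimensional, contains $I$ (by Lemma~\ref{lem:LT}(iii)) and a non-scalar element, it lies in none of the proper invariant subspaces, so the $\IF_p$-span of its $\gl{\IF_p}{2}$-orbit must be all of $\mat{\IF_p}{2}$; and because $\gal{\IQ(N)/\IQ(N/p)}$ is an elementary abelian $p$-group, that span coincides with the subgroup generated. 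This bypasses the ``no eigenvalue in $\IF_p$'' lemma entirely --- you never need to know that $\mathcal L(G)$ is a field, only that it is not contained in $\IF_p\cdot I$ or $\mathfrak{sl}_2$, which is weaker than the non-split Cartan conclusion the paper has to establish. The paper's route has the aesthetic merit of unifying the $n=1$ and $n\ge 2$ cases by appealing to the same group-theoretic Lemma~\ref{lem:grouplemma} twice, whereas yours trades the extra Lubin--Tate bookkeeping for the standard fact that $\mathfrak{sl}_2(\IF_p)$ is irreducible under the adjoint action for $p\ge 5$. One small remark: you assert slightly more than you use when you claim $\rho(J)$ is literally the image of the regular representation $\IF_q\hookrightarrow\mat{\IF_p}{2}$; the argument only requires that $\mathcal L(G)$ contain $I$ and one non-scalar element, and stating only that would keep the proof independent of the precise shape of the Lubin--Tate action after the possible twist.
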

\begin{proof}
For the proof we  abbreviate
\begin{equation*}
G =  \gal{\IQ_q(N)/\IQ_q(N/p)}.  
\end{equation*}

The hypothesis on $\alpha$ in (ii)  implies
\begin{equation*}
   \sigma\psi\sigma^{-1} \in \gal{\IQ(N)/\IQ(N/p)(\alpha)}
\quad\text{for all}\quad
\sigma \in \gal{\IQ(N)/\IQ}\quad\text{and all}\quad
\psi \in G.
\end{equation*}
Therefore, part (ii) follows immediately from part (i) which we
proceed to prove.

Let $H$ be the subgroup of $\gal{\IQ(N)/\IQ}$ generated
by $\sigma\psi\sigma^{-1}$ where $\sigma$ varies over
$\gal{\IQ(N)/\IQ}$ and $\psi$ varies over $G$. Then
\begin{equation}
\label{eq:Hincluded}
H\subset\gal{\IQ(N)/\IQ(N/p)}  
\end{equation}
 and our task is to show equality. 

It is convenient to fix  isomorphisms
$E[p]\cong (\IZ/p\IZ)^2$ and $E[p^n]\cong (\IZ/p^n\IZ)^2$
that are compatible with the natural inclusion
$E[p]\subset E[p^n]$. We will identify
$\aut{E[p]}$ and $\aut{E[p^n]}$ with
$\gl{\IF_p}{2}$ and $\gl{\IZ/p^n\IZ}{2}$, respectively.
There are two natural Galois representations
\begin{equation*}
  \widetilde \rho : \gal{\IQ(N)/\IQ}\rightarrow \gl{\IF_p}{2}
\quad\text{and}\quad
   \rho : \gal{\IQ(N)/\IQ}\rightarrow \gl{\IZ/p^n\IZ}{2}.
\end{equation*}
They fit into the commutative diagram
\begin{equation}
\label{cd:restriction}
\if \usexypic 1
    \xymatrix{
& \gal{\IQ(N)/\IQ}\ar[d] \ar[r]^{\rho} \ar[dr]_{\widetilde \rho}&  \gl{\IZ/p^n\IZ}{2}\ar[d] \\
 & \gal{\IQ(p)/\IQ} \ar[r]  &  \gl{\IF_p}{2}
}  
\else
\text{(Some diagram)}
\fi
\end{equation}
 where the right vertical arrow is
 the natural surjection and the left 
vertical arrow is induced
by the restriction map. 



Let us continue the proof by splitting up into two cases.

First say $n=1$. Then
 $G$ is cyclic of order $q-1$ by Lemma \ref{lem:galoisprops}(iv).
The same holds for $\rho(G)$ because $\rho|_G$ is injective.
Therefore, $\rho(G)$ is a non-split Cartan subgroup
 of $\gl{\IF_p}{2}$.


By property (P1) the image of
 $\rho=\widetilde{\rho}$ is $\gl{\IF_p}{2}$.
We apply  Lemma \ref{lem:grouplemma} to $\rho(G)\subset\gl{\IF_p}{2}$
and use the fact that $H$ is generated by conjugating
$G$ 
 to obtain
\begin{equation}
\label{eq:rhoH}
  \rho(H) = \gl{\IF_p}{2}.
\end{equation}

The restriction map induces an injective homomorphism
 \begin{equation*}
\gal{\IQ(N)/\IQ(N/p)} \hookrightarrow
\gal{\IQ(p)/\IQ}.
 \end{equation*}
In particular, we get the second inequality in
 \begin{alignat}1
\label{eq:ineqchain}   
\# H 
\le \#\gal{\IQ(N)/\IQ(N/p)} 
\le \# \gal{\IQ(p)/\IQ}. 
 \end{alignat}
the first one follows from (\ref{eq:Hincluded}).
But $\# H \ge \#\gl{\IF_p}{2}$ by (\ref{eq:rhoH})
and thus $\# H \ge \#\gal{\IQ(p)/\IQ}$. 
The chain of inequalities (\ref{eq:ineqchain}) is actually a chain of
equalities. So part (i) of the lemma holds for $n=1$. 



Now we turn to $n\ge 2$.
If $\sigma\in \gal{\IQ(N)/\IQ(N/p)}$ then $\rho(\sigma)$ is  represented by
 $1+p^{n-1}\mathcal{L}'(\sigma)$ 
with $\mathcal{L}'(\sigma)\in \mat{\IZ}{2}$. 
Moreover, $\mathcal{L}'(\sigma)$ is well-defined modulo 
$p\mat{\IZ}{2}$. We obtain a  ``logarithm''
 $\mathcal L: \gal{\IQ(N)/\IQ(N/p)} \rightarrow \mat{\IF_p}{2}$.
The name is justified since if
 $\sigma_1,\sigma_2\in \gal{\IQ(N)/\IQ(N/p)}$, then
\begin{equation*}
 \rho(\sigma_1\sigma_2) \equiv (1+p^{n-1}\mathcal{L}(\sigma_1))(1+p^{n-1}\mathcal{L}(\sigma_2))
\equiv 1 + p^{n-1}(\mathcal{L}(\sigma_1)+\mathcal{L}(\sigma_2)) 
\mod p^n \mat{\IZ}{2}
\end{equation*}
because $n\ge 2$.
So $\mathcal{L}(\sigma_1\sigma_2) = \mathcal L
(\sigma_1)+\mathcal  L(\sigma_2)$ and $\mathcal L$ is thus a
 group homomorphism. It is easily seen to be injective and so we find
\begin{equation}
\label{eq:degreebound}
  [\IQ(N):\IQ(N/p)] \le \#\mat{\IF_p}{2} = p^4.
\end{equation}


 If $\sigma\in \gal{\IQ(N)/\IQ}$ and 
  $\psi \in G$ then   $\sigma\psi\sigma^{-1} \in 
 \gal{\IQ(N)/\IQ(N/p)}$ and
 a short calculation gives
\begin{equation*}
 \rho(\sigma\psi\sigma^{-1}) \equiv 1+ p^{n-1} 
\widetilde{\rho}(\sigma)\mathcal L'(\psi) \widetilde{\rho}(\sigma)^{-1}
\mod p^n\mat{\IZ}{2}. 
\end{equation*}
So
\begin{equation}
\label{eq:ABaction}
\mathcal L(\sigma\psi\sigma^{-1}) = \widetilde\rho(\sigma) \mathcal L(\psi) \widetilde\rho(\sigma)^{-1}.  
\end{equation}

By Lemma \ref{lem:galoisprops}(iv) $G$ has order $p^2$, so
 $\# \mathcal L(G) = p^2$. In particular, $\mathcal L(G)$ contains a non-scalar matrix
$\theta$. 
One consequence of  Lubin-Tate theory, cf. Lemma \ref{lem:LT}(iii),
is that $\rho(G)$ contains 
all scalar matrices in $ \gl{\IZ/p^n\IZ}{2}$. 
Tracing through the definition of $\mathcal L$ this means that
$\mathcal L(G)$
contains the scalar matrices  $\IF_p\subset \mat{\IF_p}{2}$. 
Since $\mathcal L(G)$ is a subgroup of $\mat{\IF_p}{2}$ we find
$\mathcal L(G) = \IF_p + \IF_p \theta$.
By the Theorem of Cayley-Hamilton 
$\theta^2 \in \mathcal L(G)$, so $\mathcal L(G)$
is a commutative $\IF_p$-algebra.

Next we claim that $\theta$ has no eigenvalues in $\IF_p$. 
We recall that $\IQ_q(N)/\IQ_q(M)$ is abelian, cf. Lemma 
\ref{lem:galoisprops}(iii).
So all matrices in $\widetilde\rho(\gal{\IQ_q(N)/\IQ_q(M)})$
commute with $\theta$ by  (\ref{eq:ABaction}).
Therefore,
matrices in 
 the image of $\gal{\IQ_q(p)/\IQ_q}$ under the bottom arrow of
(\ref{cd:restriction}) commute with $\theta$ too.
The said arrow is injective and
we know from Lemma \ref{lem:LT}(i)
that $\gal{\IQ_q(p)/\IQ_q}$ has order $q-1$. 
So $q-1$ divides the order of the
 centralizer of $\theta$ in $\gl{\IF_p}{2}$. If $\theta$ were to have an eigenvalue
 in $\IF_p$, then it would be conjugate, over $\IF_p$, to either
 \begin{equation*}
   \left(
   \begin{array}{cc}
     \phi & 0 \\ 0& \mu
   \end{array}
\right) \quad\text{or}\quad
   \left(
   \begin{array}{cc}
     \phi & 1 \\ 0& \phi
   \end{array}
\right)
 \end{equation*}
for some $\phi,\mu\in \IF_p$. 
The only matrices listed above having centralizer of order divisible
by $q-1$ are the scalar matrices. 
This
contradicts our choice of $\theta$.

Since $\theta$ has no eigenvalues in $\IF_p$ we deduce
$\mathcal L(G)^\times = \mathcal L(G)\ssm\{0\}$. Hence $\mathcal L(G)$ is a field with $q$
elements and 
 $\mathcal L(G)^\times$ is a non-split Cartan
subgroup  of $\gl{\IF_p}{2}$.  

We recall 
that by (P2)
the image of $\widetilde\rho$ is $\gl{\IF_p}{2}$.
So the definition of $H$ 
 and (\ref{eq:ABaction}) imply that  conjugating
a matrix in $\mathcal L(G)$ by any element of $\gl{\IF_p}{2}$  stays
within $\mathcal L(H)$. 
We apply Lemma \ref{lem:grouplemma} to the subgroup $\mathcal L(G)^\times$
and deduce
$\#\mathcal L(H) > p^3$.
But $\mathcal L(H)$  is a subgroup of
$\mat{\IF_p}{2}$. So its cardinality must be $p^4$.


The conclusion of the case $n\ge 2$ is  similar to the case $n=1$:
we have
\begin{alignat}1
\label{eq:ineqchain2}
\# H
&\le  \#\gal{\IQ(N)/\IQ(N/p)} \le p^4
\end{alignat}
where we  used (\ref{eq:Hincluded}) and (\ref{eq:degreebound}).
 But $\# H \ge \#\mathcal L(H) =  p^4$ by the previous paragraph.
As above this  implies  equality throughout
(\ref{eq:ineqchain2}) and the proof of part (i) is complete. 
\end{proof} 

Using this last lemma we can strengthen Lemma \ref{lem:ptorsion}
to cover  the tamely ramified case, i.e. for  algebraic
numbers  in $\IQ(N)$ when $p^2\nmid N$.

\begin{lemma}
  \label{lem:pM}
Suppose $E$ and $p$ satisfy (P1) and (P2).
We assume   $p^2 \nmid N$.
If $\alpha\in\IQ(N) \ssm \mu_\infty$
is non-zero, there exists a non-zero
$\beta\in \IQbar \ssm\mu_\infty$ with 
$\height{\beta}\le 2p^4\height{\alpha}$ and
  \begin{equation}
\label{eq:ptorsionineq2}
\height{\alpha} + 
\max\left\{0,\frac{1}{[\IQ(\beta):\IQ]}
\sum_{\tau}
\log |\tau(\beta)-1|\right\}
\ge \frac{\log p}{2p^8}
  \end{equation}
where the sum runs over all field embeddings $\tau : \IQ(\beta)\rightarrow \IC$.
\end{lemma}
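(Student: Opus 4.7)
The plan is to remove the technical hypothesis $\alpha^{Q(n)}\not\in\IQ_q(N/p)$ from Lemma \ref{lem:ptorsion} through a case analysis and, when needed, a descent to the smaller field $\IQ(M)$ via Lemma \ref{lem:localdescent}. Since $p^2\nmid N$, either $p\nmid N$, or $p\mid N$ with $p^2\nmid N$; in the latter case $n=1$ and $Q(1)=(q-1)q$ in the notation of Section \ref{sec:supersingular}.

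If $p\nmid N$, then Lemma \ref{lem:primetoptorsion} applies directly and yields $\height{\alpha}\ge \log(p/2)/(p^2+1)$. For $p\ge 5$ this comfortably exceeds $\log p/(2p^8)$, and the choice $\beta=\alpha$ trivially satisfies $\height{\beta}\le 2p^4\height{\alpha}$. The maximum term in (\ref{eq:ptorsionineq2}) is non-negative by construction, so the inequality holds.

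The substantive case is $p\mid N$, where I would split further by the status of $\alpha^{Q(1)}$. Suppose first that some Galois conjugate $\sigma(\alpha)$, for $\sigma\in\gal{\IQ(N)/\IQ}$, satisfies $\sigma(\alpha)^{Q(1)}\not\in\IQ_q(N/p)$. Since the height is invariant under conjugation, replacing $\alpha$ by $\sigma(\alpha)$ preserves every hypothesis of the statement, and Lemma \ref{lem:ptorsion} directly supplies the required $\beta$. Otherwise $\sigma(\alpha^{Q(1)})\in\IQ_q(N/p)$ for every $\sigma\in\gal{\IQ(N)/\IQ}$, and Lemma \ref{lem:localdescent}(ii) applied to $\alpha^{Q(1)}\in\IQ(N)$ forces $\alpha^{Q(1)}\in\IQ(N/p)=\IQ(M)$. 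Since $\alpha\ne 0$ and $\alpha\not\in\mu_\infty$, the same holds for $\alpha^{Q(1)}$, and Lemma \ref{lem:primetoptorsion} applied to $\alpha^{Q(1)}\in\IQ(M)\setminus\mu_\infty$ gives $Q(1)\height{\alpha}=\height{\alpha^{Q(1)}}\ge \log(p/2)/(p^2+1)$, whence $\height{\alpha}\ge \log(p/2)/\bigl(p^2(p^4-1)\bigr)$. A short elementary estimate confirms that this exceeds $\log p/(2p^8)$ for $p\ge 5$, and once again $\beta=\alpha$ works.

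The only real obstacle is the possibility that the hypothesis of Lemma \ref{lem:ptorsion} fails for every Galois twist of $\alpha$, i.e.\ that $\alpha^{Q(1)}$ genuinely "drops" to the smaller field $\IQ(M)$. It is precisely Lemma \ref{lem:localdescent}(ii), whose global nature was established by exploiting the non-split Cartan subgroup structure of $\gl{\IF_p}{2}$, that allows this scenario to be recognised cleanly and reduced to the unramified estimate of Lemma \ref{lem:primetoptorsion}; property (P2) enters the current lemma only through this invocation.
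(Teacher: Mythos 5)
Your proof is correct and follows essentially the same route as the paper's: split into the cases whether some Galois conjugate of $\alpha^{Q(1)}$ escapes $\IQ_q(N/p)$ (apply Lemma~\ref{lem:ptorsion}) or not (descend via Lemma~\ref{lem:localdescent}(ii) to $\IQ(N/p)$ and invoke the unramified bound of Lemma~\ref{lem:primetoptorsion}), with the same arithmetic at the end. The only cosmetic difference is that you handle $p\nmid N$ as a separate initial case, whereas the paper absorbs it by noting $p\mid N$ may be assumed without loss of generality; both dispositions are fine.
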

\begin{proof}
For brevity we write $Q = Q(1) = (q-1)q$.
It is no restriction to assume $p|N$. 
If there is  $\sigma\in\gal{\IQ(N)/\IQ}$
with $\sigma(\alpha)^Q \not\in \IQ_q(N/p)$ then we may apply
 Lemma \ref{lem:ptorsion} to $\sigma(\alpha)$. The current
lemma follows because
 $\height{\sigma(\alpha)}=\height{\alpha}$. 

Conversely, if $\sigma(\alpha^Q) \in \IQ_q(N/p)$ 
for all $\sigma$, then
Lemma \ref{lem:localdescent}(ii) implies $\alpha^Q \in \IQ(N/p)$.

But $N/p$ and $p$ are coprime by hypothesis. 
We can refer to the unramified case treated in Lemma
 \ref{lem:primetoptorsion} to deal with $\alpha^Q$. 
Clearly $\alpha^Q$ is non-zero and not a root of unity. So 
\begin{equation*}
  \height{\alpha^Q} \ge \frac{\log(p/2)}{p^2+1}.
\end{equation*}
Basic height properties imply $\height{\alpha^Q} = Q\height{\alpha}
=(p^2-1)p^2\height{\alpha}$, hence
\begin{equation*}
  \height{\alpha}\ge \frac{\log(p/2)}{p^2(p^4-1)}
\ge \frac{\log(p/2)}{p^6}.
\end{equation*}
This lower bound is  better than (\ref{eq:ptorsionineq2}) since $p\ge 5$.
The current lemma follows with $\beta=\alpha$.
\end{proof}

Now we will construct 
 a useful automorphism of $\IQ(N)/\IQ$.

\begin{lemma}
\label{lem:center}
Suppose $E$ and $p$ satisfy (P1). 
Let $n\ge 0$ be the greatest
integer with $p^n|N$.
 There exists $\sigma\in \gal{\IQ_q(N)/\IQ_q}$, lying 
 in the center of
    $\gal{\IQ(N)/\IQ}$,
such that $\sigma(\zeta)=\zeta^4$
for all $\zeta\in \mu_{p^n}$. Moreover, $\sigma$ acts on $E[p^n]$ as
multiplication by $2$. 
\end{lemma}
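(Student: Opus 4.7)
The plan is to build $\sigma$ whose image under the faithful representation $\gal{\IQ(N)/\IQ}\hookrightarrow\aut{E[N]}$ is a scalar matrix; since scalars lie in the center of $\gl{\IZ/N\IZ}{2}$, this immediately yields the required centrality in $\gal{\IQ(N)/\IQ}$. The condition on $\mu_{p^n}$ will then follow automatically from the Galois equivariance of the Weil pairing, via $\sigma(\zeta)=\zeta^{d^2}$ whenever $\sigma$ acts on $E[p^n]$ as $[d]$.

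Write $N=p^nM$ with $p\nmid M$; the case $n=0$ is trivial ($\sigma=\mathrm{id}$ and both conditions are vacuous), so assume $n\ge 1$. The extension $\IQ_q(p^n)/\IQ_q$ is totally ramified by Lemma \ref{lem:LT}(i), $\IQ_q(M)/\IQ_q$ is unramified by Lemma \ref{lem:unramified}, and their compositum equals $\IQ_q(N)$ by Lemma \ref{lem:galoisprops}(i). So Lemma \ref{lem:lift}(i) supplies the restriction isomorphism
\begin{equation*}
 \gal{\IQ_q(N)/\IQ_q}\ \cong\ \gal{\IQ_q(p^n)/\IQ_q}\times\gal{\IQ_q(M)/\IQ_q}.
\end{equation*}
Apply Lemma \ref{lem:LT}(iii), with the integer $2$ in place of $M$ (legal since $p\ge 5$), to produce $\sigma_p\in\gal{\IQ_q(p^n)/\IQ_q}$ acting on $E[p^n]$ as $[2]$, and take $\sigma$ to be the preimage of $(\sigma_p,\mathrm{id}_{\IQ_q(M)})$ under the above isomorphism.

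By construction $\sigma$ acts on $E[p^n]$ as $[2]$ and trivially on $E[M]$. Via the natural identification $\aut{E[N]}=\gl{\IZ/N\IZ}{2}$ together with the CRT decomposition $E[N]=E[p^n]\oplus E[M]$, this means $\sigma$ acts on $E[N]$ as the scalar $[c]$ with $c\equiv 2\pmod{p^n}$ and $c\equiv 1\pmod M$. Viewing $\sigma$ in $\gal{\IQ(N)/\IQ}$ via the decomposition-group identification recalled in Section \ref{ssec:local}, its image is the scalar $[c]$, which commutes with every matrix in $\gl{\IZ/N\IZ}{2}$; by injectivity of the Galois representation, $\sigma$ lies in the center of $\gal{\IQ(N)/\IQ}$.

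For the action on $\mu_{p^n}$, use surjectivity and Galois equivariance of the Weil pairing $e_{p^n}\colon E[p^n]\times E[p^n]\to\mu_{p^n}$: given $\zeta=e_{p^n}(S,T)$,
\begin{equation*}
 \sigma(\zeta)=e_{p^n}(\sigma S,\sigma T)=e_{p^n}([2]S,[2]T)=e_{p^n}(S,T)^{4}=\zeta^4.
\end{equation*}
There is no genuine obstacle in this argument; the product decomposition of the decomposition group at $p$ reduces the task to specifying the $p$-part and prime-to-$p$ part of the scalar independently, which Lemma \ref{lem:LT}(iii) and the trivial choice on $\IQ_q(M)$ handle directly.
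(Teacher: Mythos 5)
Your proof is correct and follows essentially the same route as the paper's: both use Lemma \ref{lem:LT}(iii) to produce an automorphism of $\IQ_q(p^n)/\IQ_q$ acting as $[2]$ on $E[p^n]$, lift it to $\gal{\IQ_q(N)/\IQ_q(M)}$ (you via Lemma \ref{lem:lift}(i), the paper via Lemma \ref{lem:galoisprops}(iii), which is itself proved from \ref{lem:lift}(i)), observe that the lift acts as an integer scalar on $E[N]$ and hence lies in the center by injectivity of the global representation, and finally invoke Galois-equivariance and bilinearity of the Weil pairing for the $\zeta\mapsto\zeta^4$ claim. The only cosmetic difference is that you make the scalar-matrix/CRT reasoning explicit where the paper states it more tersely.
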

Before we prove this lemma, let us recall that 
$\IQ_q(N)$ contains $\mu_{p^n}$ by Lemma \ref{lem:rootofone}.
\begin{proof}
Since $p$ is odd,  Lemma \ref{lem:LT}(iii) implies that
there is $\sigma' \in \gal{\IQ_q(p^n)/\IQ_q}$ which acts
on $E[p^n]$ as multiplication by $2$.

By properties of the Weil pairing we see that
 $\sigma'$ acts as $\zeta\mapsto
\zeta^4$ on the roots of unity of order dividing $p^n$;
bilinearity of the Weil pairing is responsible for 
  $4=2^2$ in the exponent.

By Lemma \ref{lem:galoisprops}(iii) the automorphism
$\sigma'$ lifts uniquely to
$\sigma \in \gal{\IQ_q(N)/\IQ_q(M)}$.

Taking the sum of points  gives an isomorphism between
 $E[p^n]\times E[M]$ and $E[N]$ which is compatible with the
 action of $\gal{\IQ(N)/\IQ}$. Since $\sigma$  
 acts    as multiplication by $2$ on $E[p^n]$ and
trivially on $E[M]$, it must lie in the center of
$\gal{\IQ(N)/\IQ}$. 
\end{proof}

In the next proposition we fix the auxiliary prime
$p$ which has accompanied us until now. Its proof contains a Kummerian descent  reminiscent to  one
 used by Amoroso and Zannier \cite{AZ:unifrel}.

\begin{proposition}
\label{prop:prebogo}
Suppose $E$ does not have complex multiplication.
  There exists a constant $c>0$ depending only on $E$ with the
  following property. If $\alpha\in \IQ(\tors{E})\ssm\mu_\infty$
 is non-zero, there is a non-zero $\beta\in\IQbar\ssm\mu_\infty$
with $\height{\beta}\le c^{-1} \height{\alpha}$ and 
\begin{equation}
\label{eq:prebogo}
   \height{\alpha} + \max\left\{0,\frac{1}{[\IQ(\beta):\IQ]}
\sum_{\tau:\IQ(\beta)\rightarrow \IC}
\log |\tau(\beta)-1|\right\} \ge c.
\end{equation}
\end{proposition}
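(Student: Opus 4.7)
I would begin by fixing an auxiliary prime $p$ once and for all. Because $E/\IQ$ has no complex multiplication, Elkies' theorem provides infinitely many primes of good supersingular reduction, while Serre's open-image theorem rules out all but finitely many primes for which the mod-$p$ Galois representation fails to be surjective. Discarding in addition the few primes at which the reduced $j$-invariant equals $0$ or $1728$, I fix $p \ge 5$ so that $E$ and $p$ satisfy (P1) and (P2). Set $q = p^2$. All constants produced below will depend only on $p$, and hence only on $E$.

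Any $\alpha \in \IQ(\tors{E}) \setminus \mu_\infty$ lies in some $\IQ(N)$ with $N = p^n M$ and $p \nmid M$. I argue by induction on $n$. The case $n = 0$ is handled by Lemma \ref{lem:primetoptorsion} (take $\beta = \alpha$), and the case $n = 1$ by Lemma \ref{lem:pM}, each producing $\beta$ of bounded height and a uniform constant depending only on $p$. For the inductive step $n \ge 2$, I invoke the dichotomy underlying Lemma \ref{lem:pM}: either some Galois conjugate $\sigma(\alpha)$ satisfies $\sigma(\alpha)^{Q(n)} \not\in \IQ_q(N/p)$---in which case Lemma \ref{lem:ptorsion} applied to $\sigma(\alpha)$, together with the Galois-invariance of the height, yields $\beta$ with $\height{\beta} \le 2p^4 \height{\alpha}$ and the required inequality with constant $\log p/(2p^8)$---or else $\sigma(\alpha^{Q(n)}) \in \IQ_q(N/p)$ for every Galois $\sigma$, whence Lemma \ref{lem:localdescent}(ii) forces $\alpha^q \in \IQ(N/p)$ (since $Q(n) = q$ for $n \ge 2$).

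In the remaining (descent) case I would perform a Kummer-type reduction using the central automorphism $\sigma \in \gal{\IQ(N)/\IQ}$ supplied by Lemma \ref{lem:center}, which acts as $\zeta \mapsto \zeta^4$ on $\mu_{p^n}$. The point is that $\alpha^q \in \IQ(N/p)$ forces every ratio $\psi(\alpha)/\alpha$ (for $\psi \in \gal{\IQ(N)/\IQ(N/p)}$) to lie in $\mu_q \subseteq \mu_{p^n}$, and a short direct calculation---using centrality of $\sigma$ and the formula $\sigma(\zeta) = \zeta^4$ for $\zeta \in \mu_{p^n}$---then shows $\gamma := \sigma(\alpha)/\alpha^4$ is $\psi$-invariant for every such $\psi$, hence lies in $\IQ(N/p)$. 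When $\gamma \not\in \mu_\infty$ I apply the inductive hypothesis to $\gamma$ and translate the resulting inequality back to $\alpha$; when $\gamma \in \mu_\infty$ the relation $\sigma(\alpha) = \gamma \alpha^4$ severely restricts $\alpha$, and I would dispose of this special case by normalizing $\alpha$ by a root of unity $\eta$ satisfying $\sigma(\eta) = \gamma\eta^4$ and returning to the first (non-descent) case of the dichotomy.

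The main obstacle is height control under iterated descent: the naive estimate $\height{\gamma} \le 5\height{\alpha}$ would, if used $n-1$ times, produce a constant decaying like $5^{-n}$ and destroy the required uniformity in $n$. Achieving a constant depending only on $E$ therefore demands a more delicate Kummerian descent that is truly height-preserving---morally, that $\alpha$ differs from an element of $\IQ(N/p)$ only by a root-of-unity factor. This is precisely where Lemma \ref{lem:qpower}, which bounds the orders of the Galois ratios $\psi(\alpha)/\alpha$, and the fine structure of higher ramification at $p$ captured by Lemma \ref{lem:LT}, become indispensable.
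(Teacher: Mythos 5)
Your proposal has the right ingredients (the central automorphism from Lemma~\ref{lem:center}, the Kummerian ratio $\gamma=\sigma(\alpha)/\alpha^4$, the dichotomy via Lemma~\ref{lem:localdescent}), but the overall architecture is an induction on $n$, and you correctly observe that this cannot close: iterating the estimate $\height{\gamma}\le 5\height{\alpha}$ up to $n-1$ times degrades the constant geometrically. At that point you hand over to an unspecified ``more delicate, height-preserving descent,'' but no such iterated descent is actually constructed, nor is it clear one exists. This is a genuine gap: identifying the obstacle is not the same as overcoming it.

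The paper's proof avoids the problem entirely by performing the descent \emph{once}. Starting from $\alpha\in\IQ(p^nM)$, it forms $\gamma=\sigma_4(\alpha)/\alpha^4$ and, via Lemma~\ref{lem:localdescent}(ii), lands $\gamma$ directly in $\IQ(p^{n'}M)$ for the \emph{minimal} $n'$, which may be far below $n-1$. If $n'\le 1$, Lemma~\ref{lem:pM} applies to $\gamma$ and one is done; if $n'\ge 2$, the paper picks a conjugate $\gamma'\notin\IQ_q(p^{n'-1}M)$ and proves the crucial extra fact that $\gamma'^q\notin\IQ_q(p^{n'-1}M)$, so Lemma~\ref{lem:ptorsion} applies to $\gamma'$ with $Q(n')=q$. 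Establishing this last nontrivial claim is exactly the point where centrality of $\sigma_4$, the identity $\sigma_4(\zeta)=\zeta^4$ on $\mu_{p^n}$, and Lemma~\ref{lem:rootofone} enter: one assumes the contrary, produces a nontrivial $\xi\in\mu_q$ with $\xi=\sigma_4(\eta)/\eta^4$ for a root of unity $\eta=\psi(\alpha')/\alpha'$, and then forces $\xi^{\widetilde M}=\xi^q=1$ for coprime $\widetilde M,q$, hence $\xi=1$, a contradiction. Since the $\alpha\mapsto\gamma$ step is taken only once, the height only inflates by a single bounded factor, and the constant is uniform in $n$. Your outline neither performs this contradiction argument nor replaces it with anything, which is why the height control you flag remains unresolved.
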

\begin{proof}
Since $E$ does not have complex multiplication, its $j$-invariant is
not $0$ or $1728$. 
So the reduction of $E$ at $p$ is an elliptic curve with $j$-invariant
not among $\{0,1728\}$ for all but finitely many primes $p$.
By a theorem of Serre \cite{Serre:Galois} 
all but finitely many of these $p$ satisfy (P2), that is,  
 the representation
$\gal{\IQbar/\IQ}\rightarrow \aut{E[p]}$ is surjective.
Elkies \cite{Elkies} showed  that $E$ has good supersingular reduction at infinitely
many primes. 
We may thus fix a prime $p\ge 5$ satisfying both (P1) and (P2) and
 set $q=p^2$. 

Let $\alpha$ be as in the hypothesis.
Then $\alpha\in\IQ(N)$ for some 
$N=p^n M $ with $M\in\IN$ coprime to $p$ and $n$ a non-negative integer.

We take $\sigma_4\in \gal{\IQ_q(N)/\IQ_q}$ 
as in Lemma \ref{lem:center}. 
We define
\begin{equation}
\label{eq:lambda}
\gamma =   \frac{\sigma_4(\alpha)}{\alpha^4}
\in \IQ(N).
\end{equation}
Basic height inequalities imply
\begin{equation}
\label{eq:hlambdabound}
  \height{\gamma}\le \height{\sigma_4(\alpha)}+\height{\alpha^4}
= 5\height{\alpha}.
\end{equation}

There is a least integer
$n' \ge 0$ such that  $\sigma(\gamma)\in \IQ_q(p^{n'}M)$
for all $\sigma\in \gal{\IQ(N)/\IQ}$.
It satisfies $n'\le n$ and
 Lemma \ref{lem:localdescent} implies $\gamma\in \IQ(p^{n'}M)$.

Let us first suppose $n'\le 1$, so 
$\gamma\in \IQ(pM)$. 
We want to apply Lemma \ref{lem:pM}, so let us confirm that
 $\gamma\not=0$ is not a root of unity.
Otherwise we would have
$4\height{\alpha}=\height{\alpha^4}=\height{\gamma\alpha^4}=\height{\sigma_4(\alpha)}=
\height{\alpha}$ by the basic height properties.
So $\height{\alpha}=0$. Kronecker's Theorem implies $\alpha=0$
or $\alpha\in\mu_\infty$. This 
 contradicts our assumption on $\alpha$.
Hence Lemma \ref{lem:pM} provides a non-zero
$\beta\in\IQbar\ssm\mu_\infty$ with
$\height{\beta} \le 2p^4 \height{\gamma}$
and
\begin{equation*}
  \height{\alpha} + 
\max\left\{0,\frac{1}{[\IQ(\beta):\IQ]}
\sum_{\tau}
\log |\tau(\beta)-1|\right\}
\ge \frac{\log p}{2p^8}
\end{equation*}
The bound (\ref{eq:hlambdabound}) gives
$\height{\beta} \le 10p^4 \height{\alpha}$.
Moreover, we can use (\ref{eq:hlambdabound})
  to deduce (\ref{eq:prebogo})
 with a constant
 $c$ depending only on $p$.

Hence Proposition \ref{prop:prebogo} follows if $n'\le 1$ and we will now assume
$n'\ge 2$.

By minimality of $n'$ there is $\sigma\in \gal{\IQ(N)/\IQ}$
with $\sigma(\gamma)\not\in \IQ_q(p^{n'-1}M)$. 
 We abbreviate  $\alpha'=\sigma(\alpha)$ and $\gamma'=\sigma(\gamma)$.
We 
apply $\sigma$ to (\ref{eq:lambda}) and obtain
\begin{equation}
\label{eq:lambda2}
\gamma' =   \frac{\sigma(\sigma_4(\alpha))}{\sigma(\alpha)^4}=
\frac{\sigma_4(\sigma(\alpha))}{\sigma(\alpha)^4}
=\frac{\sigma_4(\alpha')}{\alpha'^4}
\end{equation}
since $\sigma_{4}$ lies in the center of $\gal{\IQ(N)/\IQ}$.

Next we would like to apply Lemma \ref{lem:ptorsion} to $\gamma'$.
In order to do this we need to verify the hypotheses.
Note that we have $Q(n')=q$ since $n'\ge 2$,
so we must prove
  $\gamma'^q \not\in
\IQ_q(p^{n'-1}M)$.
We now assume the contrary and will soon arrive at a
 contradiction.


Since $\gamma'\not\in \IQ_q(p^{n'-1}M)$ there is
$\psi \in \gal{\IQ_q(N)/\IQ_q(p^{n'-1}M)}$ 
with $\psi(\gamma')\not=\gamma'$.
However, $\psi(\gamma'^q) = \gamma'^q$ and so
\begin{equation}
\label{eq:lambdanoteq}
 \psi(\gamma') = \xi\gamma'\quad\text{with}\quad \xi^q=1
\quad\text{while}\quad \xi\not=1.
\end{equation}
We identify $\psi$ with its restriction to $\IQ(N)$, apply
it to (\ref{eq:lambda2}), and obtain
\begin{equation*}
\xi \gamma' =
\frac{\sigma_4(\psi(\alpha'))}{\psi(\alpha')^4}
\end{equation*}
 having used  that $\psi$ commutes with $\sigma_{4}$. 
We define  $\eta =
\psi(\alpha')/\alpha'\not=0$ and get
\begin{equation*}
  \xi = \frac{\sigma_4(\eta)}{\eta^{4}}.
\end{equation*}
Basic height properties and the fact that $\xi$ is a root of unity
give
$4\height{\eta}=\height{\eta^4} = \height{\xi\eta^4} =
 \height{\sigma_4(\eta)}
 = \height{\eta}$
and as usual,  $\height{\eta}=0$. So $\eta$ is a
root of unity by Kronecker's Theorem.

We have just shown  $\psi(\alpha')/\alpha'\in\mu_\infty$.  
We  fix $\widetilde M\in\IN$ coprime to $p$ such that
$(\psi(\alpha')/\alpha')^{\widetilde M} \in \mu_{p^\infty}$.
Lemma \ref{lem:rootofone} 
implies $(\psi(\alpha')/\alpha')^{\widetilde M} \in \mu_{p^n}$.
So $\sigma_4$ raises this element to the fourth power, hence
\begin{equation*}
  \sigma_4\left(\frac{\psi(\alpha')}{\alpha'}\right)=
 \xi' \left(\frac{\psi(\alpha')}{\alpha'}\right)^{4}
\end{equation*}
with ${\xi'}^{\widetilde M}=1$. We rearrange 
this expression to obtain
\begin{equation*}
  \frac{\sigma_4(\psi(\alpha'))}{\psi(\alpha')^{4}}
= \xi' \frac{\sigma_4(\alpha')}{\alpha'^{4}} = \xi'\gamma'
\end{equation*}
using (\ref{eq:lambda2}). 
Applying again  the fact that
 $\psi$ and $\sigma_4$ commute gives 
\begin{equation*}
\psi(\gamma')=  \frac{\psi(\sigma_4(\alpha'))}{\psi(\alpha')^4}
=  \frac{\sigma_4(\psi(\alpha'))}{\psi(\alpha')^4}
= \xi' \gamma'.
\end{equation*}
We recall (\ref{eq:lambdanoteq}) and find $\xi'=\xi$,
so $\xi^{\widetilde M} = \xi^{q}=1$.
But $\widetilde M$ and $q=p^2$ are coprime, hence $\xi=1$.
This contradicts (\ref{eq:lambdanoteq}).

So we must have
$\gamma'^q\not\in\IQ_q(p^{n'-1}M)$ and
 Lemma
\ref{lem:ptorsion} yields
a lower bound for $\height{\gamma'}$ involving 
a non-zero $\beta\in\IQbar\ssm\mu_\infty$ with $\height{\beta}\le
2p^4\height{\gamma'}$. 
We have $\height{\gamma'}=\height{\gamma}\le 5\height{\alpha}$
by (\ref{eq:hlambdabound})
so $\height{\beta}\le  10p^4\height{\alpha}$.
Comparing the same upper bound  for $\height{\gamma'}$  with the lower bound
provided from Lemma \ref{lem:ptorsion} completes the proof. 
\end{proof}

\section{Equidistribution}
\label{sec:equi}

After an extensive analysis of the places above a fixed prime $p$, we
 turn our
attention to the infinite places.

Let us suppose for the moment that we are in the situation of the
Proposition \ref{prop:prebogo}.
The normalized sum over $\tau$ is by definition of the height
at most $\height{\beta-1}$. So the bound (\ref{eq:prebogo}) entails
\begin{equation*}
  \height{\alpha}+\height{\beta-1}\ge c.
\end{equation*}
Basic height inequalities show 
\begin{equation*}
\height{\beta-1}\le \height{\beta} + \log 2.
\end{equation*}
Indeed, $\log 2$ originates from the  triangle inequality
\begin{equation*}
  \log |\beta-1|_v \le \log(|\beta|_v + 1) \le \log\max\{1,|\beta|_v\} + \log 2
\end{equation*}
which holds for any infinite place $v$ of the number field $\IQ(\beta)$.
Our proposition also implies $\height{\beta}\le c^{-1}\height{\alpha}$, so $\beta$
has small height if $\alpha$ does. We find
\begin{equation*}
  (1+c^{-1})\height{\alpha}  + \log 2 \ge c.
\end{equation*}
Unfortunately,  $\log 2$ 
 spoils  the inequality completely; we
 obtain no information on
$\height{\alpha}$. 

What we need is a more refined estimate involving the infinite
places. This is provided by Bilu's Equidistribution Theorem
\cite{Bilu} which takes into account that $\beta$ has small height.
 We state it in a form streamlined  for
our application. 

\begin{theorem}[Bilu]
  Let $\beta_1,\beta_2,\ldots$ be a sequence of non-zero elements of 
$\IQbar\ssm \mu_\infty$ with $\lim_{k\rightarrow \infty}
  \height{\beta_k}=0$.  
If $f:\IC\ssm\{0\}\rightarrow \IR$ is a continuous and bounded
function, then
\begin{equation*}
  \lim_{k\rightarrow\infty}
\frac{1}{[\IQ(\beta_k):\IQ]}\sum_{\tau}
f(\tau(\beta_k)) = \int_0^1 f(e^{2\pi i t})dt
\end{equation*}
where  $\tau$ runs over all field embeddings $\IQ(\beta_k)\rightarrow \IC$.
\end{theorem}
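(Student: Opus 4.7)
The plan is to reformulate the conclusion as weak convergence, on $\IC \ssm \{0\}$, of the empirical probability measures
\[ \mu_k = \frac{1}{d_k}\sum_{\tau}\delta_{\tau(\beta_k)}, \qquad d_k = [\IQ(\beta_k):\IQ], \]
to the normalized Haar (uniform) measure $\lambda$ on the unit circle. First I would prove tightness on $\IC\ssm\{0\}$: from the definition of the height, $\int \log^+|z|\,d\mu_k(z) \le \height{\beta_k}$, and applying this bound to $\beta_k^{-1}$ together with the basic identity $\height{\beta_k^{-1}}=\height{\beta_k}$ yields $\int \log^+|z|^{-1}\,d\mu_k(z)\le \height{\beta_k}$. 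Since $\height{\beta_k}\to 0$, Chebyshev's inequality forbids any mass of $\mu_k$ from escaping to $0$ or to $\infty$, so Prokhorov's theorem produces weak subsequential limits $\mu$ on $\IC\ssm\{0\}$. The same two integral bounds, applied to truncations of $|\log|z||$, force $\int|\log|z||\,d\mu=0$, hence $\mu$ is supported on $\{|z|=1\}$.

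Next I would identify every such $\mu$ with $\lambda$ via a logarithmic energy comparison. Let
\[ I(\nu) = \iint \log|z-w|\,d\nu(z)\,d\nu(w); \]
classical potential theory asserts that on probability measures $\nu$ supported on the unit circle one has $I(\nu)\le 0$, with equality only for $\nu=\lambda$. To obtain a matching lower bound for $I(\mu)$, I would invoke the discriminant inequality $|\mathrm{disc}(P_k)|\ge 1$, valid because the minimal polynomial $P_k\in\IZ[z]$ of $\beta_k$ is separable and primitive. Writing this out as
\[ \frac{1}{d_k^{\,2}}\sum_{i\ne j}\log|\beta_k^{(i)}-\beta_k^{(j)}| \ge -\frac{2d_k-2}{d_k^{\,2}}\log|a_k|, \]
with $a_k$ the leading coefficient of $P_k$, and using $d_k^{-1}\log|a_k|\le \height{\beta_k}\to 0$, one obtains $\liminf_k I(\mu_k)\ge 0$, and hence $I(\mu)\ge 0$ in the limit. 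Combined with the upper bound $I(\mu)\le 0$ this forces $\mu=\lambda$.

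Since every subsequence of $(\mu_k)$ admits a further subsequence converging weakly to the same limit $\lambda$, the full sequence converges weakly on $\IC\ssm\{0\}$, and the statement follows upon testing against the bounded continuous function $f$. The main technical obstacle is the passage from the discrete discriminant inequality to the continuous inequality $I(\mu)\ge 0$: the kernel $\log|z-w|$ is unbounded below, so one must truncate it to $\max\{-N,\log|z-w|\}$, use weak convergence and the tightness from the first step to pass to the limit in $k$, and only then let $N\to\infty$ by monotone convergence. The diagonal $i=j$ under $\mu_k\otimes\mu_k$ has mass $1/d_k\to 0$, so it is absorbed in this same truncation argument. This truncation-and-monotone-convergence device is the heart of Bilu's original proof.
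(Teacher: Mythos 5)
The paper does not contain a proof of this statement; it attributes the theorem directly to Bilu's original paper \cite{Bilu} and uses it as a black box. There is therefore no internal proof to compare against, and your proposal must be judged on its own merits.

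Your outline is essentially Bilu's own argument, and it is correct. The decomposition into tightness on $\IC\ssm\{0\}$ (via $\int\log^{+}|z|\,d\mu_k\le\height{\beta_k}$ and, using $\height{\beta_k^{-1}}=\height{\beta_k}$, the matching bound $\int\log^{+}|z|^{-1}\,d\mu_k\le\height{\beta_k}$), concentration of any weak limit on the unit circle, the potential-theoretic energy inequality $I(\nu)\le 0$ for probability measures on the circle with equality only for normalized arc length, and the discriminant lower bound $|\mathrm{disc}(P_k)|\ge 1$ feeding into $\liminf_k I(\mu_k)\ge 0$ is exactly the standard route. You have also correctly identified the truncation-and-monotone-convergence step as the technical crux, since the kernel $\log|z-w|$ is neither bounded below nor finite on the diagonal.

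The one place you pass over something real is the assertion that the diagonal mass $1/d_k$ tends to $0$, where $d_k=[\IQ(\beta_k):\IQ]$. This is true but is not among the hypotheses, and it must be justified before the truncated empirical energy can be compared to the off-diagonal sum controlled by the discriminant. It follows from Northcott's theorem together with Kronecker's theorem: if infinitely many $\beta_k$ had bounded degree, then, because they also have bounded (indeed shrinking) height, they would lie in a finite set, so some fixed $\beta\notin\{0\}\cup\mu_\infty$ would repeat infinitely often; but then $\height{\beta_k}$ equals the fixed positive number $\height{\beta}$ along that subsequence, contradicting $\height{\beta_k}\to 0$. As written this step is a gap, but an easily repairable one, and once supplied the argument is complete.
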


We now prove Theorem \ref{thm:main}.


We suppose first that $E$ has complex multiplication. As
 we have seen in the
introduction, Amoroso and Zannier's result \cite{AZ:relDobrowolski} implies that
$\IQ(\tors{E})$ satisfies the Bogomolov property. 
So let us assume that $E$ does not have complex multiplication.

Our argument is by contradiction. We suppose that
 $\alpha_1,\alpha_2,\dots$ is a sequence of non-zero elements
of $\IQ(\tors{E})\ssm\mu_\infty$ with $\lim_{k\rightarrow\infty} \height{\alpha_k}=0$. 



Let $m\in \IN$ we define a 
 continuous and bounded function
$f_m:\IC\ssm\{0\}\rightarrow \IR$
 by setting
\begin{equation*}
  f_m(z) = \min\{m,\max\{-m,\log|z-1|\}\}
\end{equation*}
for $z\not=1$ and $f_m(1) =-m$.

The sequence of functions $s\mapsto f_m(e^{2\pi i s})$ converges pointwise
to $s\mapsto \log |e^{2\pi i s}-1|$ on $(0,1)$ as $m\rightarrow \infty$.
Clearly, $|f_m(e^{2\pi i s})|\le | \log |e^{2\pi i s}-1||$
and $\int_0^1 |\log|e^{2\pi i s}-1|| ds <\infty$. So
the Dominant Convergence Theorem from analysis implies
\begin{equation*}
  \lim_{m\rightarrow\infty} \int_0^1 f_m(e^{2\pi i s}) ds = 
\int_0^1 \log|e^{2\pi i s}-1|ds. 
\end{equation*}

The latter integral is the logarithmic Mahler measure of the 
polynomial $X-1$. As such, it vanishes by Jensen's Formula. 
So we may fix once and for all  an  $m$ with
\begin{equation}
\label{eq:choosem}
  \int_0^1 f(e^{2\pi i s})ds < \frac c2
\quad\text{and}\quad
\log(1+2e^{-m}) \le  \frac c2
\end{equation}
where $c$ is the positive constant from Proposition 
\ref{prop:prebogo}
and $f=f_m$.

The proposition also gives us a
non-zero $\beta_k\in \IQbar\ssm\mu_\infty$
for each $\alpha_k$
which satisfies
\begin{equation}
\label{eq:weakbound}
  \height{\alpha_k} + \max\left\{0,\frac{1}{[\IQ(\beta_k):\IQ]}
\sum_{\tau:\IQ(\beta_k)\rightarrow\IC}
\log |\tau(\beta_k)-1| \right\} \ge c.
\end{equation}
and 
\begin{equation}
\label{eq:heighttkbound}
 \height{\beta_k} \le \frac{\height{\alpha_k}}{c}.
\end{equation}

We proceed by bounding the sum in (\ref{eq:weakbound}) from above. 
Let $\tau:\IQ(\beta_k)\rightarrow \IC$ be an embedding. 
We write $z=\tau(\beta_k) \in \IC\ssm\{0,1\}$ and split up into cases depending on the 
size of $|z-1|$.

Suppose for the moment that $|z-1| \ge e^m$. Then
$|z|\ge e^m -1 \ge e^m/2$ since $m\ge 1$.
So $|z-1|/|z| \le 1+1/|z| \le 1+2e^{-m}$. Applying the logarithm 
and using (\ref{eq:choosem}) gives
\begin{equation*}
  \log|z-1| \le \log(1+2e^{-m}) + \log |z| \le \frac c2 + \log|z|
\le \frac c2 + \log \max\{1,|z|\}.
\end{equation*}
Because $f(z) = m \ge 0$ we conclude
\begin{equation}
\label{eq:logtkbound}
  \log|\tau(\beta_k)-1| \le \frac c2 + \log\max\{1,|\tau(\beta_k)|\} + f(\tau(\beta_k)).
\end{equation}

The second case is  $|z-1|< e^m$. Then $\log|z-1|\le  \max\{-m,\log|z-1|\}
=f(z)$. 
So   (\ref{eq:logtkbound}) holds as well.

Taking the sum over all field embeddings $\tau:\IQ(\beta_k)\rightarrow\IC$,
applying  (\ref{eq:logtkbound}),
  and dividing by the
degree yields
\begin{equation*}
  \frac{1}{[\IQ(\tau_k):\IQ]}\sum_{\tau}
\log|\tau(\beta_k)-1| 
\le \frac c2 +\height{\beta_k} + 
  \frac{1}{[\IQ(\tau_k):\IQ]} \sum_{\tau}
f(\tau(\beta_k)).
\end{equation*}
Hence (\ref{eq:weakbound}) implies
\begin{equation}
\label{eq:weakbound2}
 \height{\alpha_k} +    
  \max\left\{0,\frac c2 + \height{\beta_k} +\frac{1}{[\IQ(\tau_k):\IQ]} \sum_{\tau}
f(\tau(\beta_k))\right\} \ge c.
\end{equation}

The sequence  $\height{\alpha_1},\height{\alpha_2},\dots$ tends to
zero, hence so does $\height{\beta_1},\height{\beta_2},\ldots$ by
(\ref{eq:heighttkbound}).
 We will apply Bilu's Theorem to $\beta_1,\beta_2,\dots$ and
 the function $f$. 
On letting $k\rightarrow \infty$  the sum
 (\ref{eq:weakbound2}) over the $\tau$ converges to the integral 
$\int_0^1 f(e^{2\pi i s})ds < c/2$ and both terms involving the height
vanish. This is a contradiction.
\qed

\section{Height Lower Bounds on Elliptic Curves}
\label{sec:thm2}

\subsection{The N\'eron-Tate Height}
\label{sec:ntheight}
Let $E$ be an elliptic curve defined over a number field $F$. 
We suppose that $E$ is presented by a short Weierstrass equation.

The N\'eron-Tate height takes a point $A\in E(F)$
to a  real number $\hat h(A)\ge 0$. 
It can be defined either as 
 a sum of local heights or a limit process involving the Weil
 height. We begin with a brief review of the first definition. Say $v$ is a place of $F$
and let $E_v$ be $E$ taken as an elliptic curve defined over $F_v$.  
When working at a fixed place we will assume $F\subset F_v$. 
There is a
local height function $\lambda_v:E(F_v)\ssm\{0\}\rightarrow \IR$, some
of whose properties are discussed below.
These local height functions are defined in Chapter VI
\cite{Silverman:Adv} and they are independent of the chosen Weierstrass equation.
They sum up to give the N\'eron-Tate height
\begin{equation*}
    \hat h(A) = \frac{1}{[F:\IQ]}\sum_{v\text{ place of }F} d_v \lambda_v(A)
  \end{equation*}
for $A\not=0$. We remark that only finitely many terms $\lambda_v(A)$
are non-zero and set $\hat h(0)=0$. 
Let $K$ be a number field containing $F$ and $w$
a place of $K$ extending $v$. Then we may take $F_v\subset K_w$
and we have $\lambda_v = \lambda_w$ on $E(F_v)$. So we obtain a local
height function $\lambda_v:E(\overline{F_v})\ssm\{0\}\rightarrow\IR$
where  $\overline{F_v}$
is an algebraic closure of $F_v$.

Because we are working with a Weierstrass equation any
 $A\in E(F)\ssm\{0\}$ can be expressed as $A=(x,y)$. We set
$\height{A} = \height{x}/2$ and $\height{0} = 0$. 
The definition of the N\'eron-Tate height in terms of local heights is
equivalent to  
\begin{equation*}
  \hat h(A) = \lim_{k\rightarrow\infty}\frac{\height{[2^k](A)}}{4^k}.
\end{equation*}
 We  refer to Chapter VIII, \S 9 \cite{Silverman:AEC} 
for the
  basic properties of the N\'eron-Tate height which follow. 

The N\'eron-Tate height
 does not depend on the number field $F$ over which the
point $A$ is defined. 
We thus obtain a well-defined function $\hat h:E(\overline
F)\rightarrow [0,\infty)$ on any algebraic closure $\overline F$ of
  $F$. 
The elliptic version of Kronecker's Theorem also holds: the
N\'eron-Tate height vanishes precisely on
$\tors{E}$. Moreover, it satisfies the parallelogram equality
\begin{equation*}
  \hat h(A+B) + \hat h(A-B) = 2\hat h(A) + 2\hat h(B)
\end{equation*}
for all $A,B\in E(\overline F)$ as well as
\begin{equation*}
  \hat h(nA) = n^2 \hat h(A)
\end{equation*}
for all  $n\in \IZ$. 
A direct consequence is 
\begin{equation}
\label{eq:torsioninvariance}
 \hat h(A+B) = \hat h (A) 
\quad\text{if $B$ happens to be a torsion point.}
\end{equation}

If $\ell\ge 2$ is a prime number or if $\ell=\infty$ 
it will be convenient to define the partial height function
\begin{equation*}
  \hat h_\ell(A)  = \frac{1}{[F:\IQ]}
\sum_{v|\ell} d_v \lambda_v(A)
\end{equation*}
for $A\in E(F)\ssm\{0\}$.
Then $\hat h_\ell$ extends to a well-defined function $E(\overline
F)\ssm\{0\}\rightarrow \IR$. 
In this notation
\begin{equation*}
  \hat h = \hat h_{\infty} + \hat h_2+ \hat h_3+\cdots.
\end{equation*}

We briefly discuss some relevant equidistribution 
properties of local height functions.
To do this let $v$ be a place of $F$.

Suppose first that $v$ is an infinite place of $F$. 
Up to complex conjugation, $v$ determines a
 field embedding $\sigma_0:F\rightarrow\IC$.
We thus obtain  an elliptic curve  $E_v$ defined
over $\IC$. The local height function $\lambda_v :
E_v(\IC)\ssm\{0\}\rightarrow \IR$ is given explicitly in Theorem VI.3.2
 \cite{Silverman:Adv}. 
There is $\tau\in \IC$ with positive imaginary part $\imag{\tau}$ and
 a complex analytic isomorphism
$\IC/(\IZ+\tau\IZ)\rightarrow E_v(\IC)$ of groups involving the Weierstrass elliptic
function. 
We abbreviate $q = e^{2\pi i \tau}$ and remark $|q|<1$. 
If $A\in E_v(\IC)\ssm\{0\}$ is the image of $z\in \IC$
and $u = e^{2\pi i z}$, then
\begin{equation}
\label{eq:localheightinf}
  \lambda_v(A) = - \frac 12 b_2\left(\frac{\imag{z}}{\imag{\tau}}\right)\log|q|
- \log |1-u| - \sum_{n\ge 1} \log|(1-q^n u)(1-q^n u^{-1})|
\end{equation}
where $b_2 = X^2-X+1/6$ is the second Bernoulli polynomial. 

The group $E_v(\IC)$ endowed with the complex topology is compact. 
 Hence it comes with a unique Haar measure
$\mu_{E,v}$ of total measure $1$.


A sufficiently strong
 analog to Bilu's Equidistribution Theorem is given by 
 Szpiro, Ullmo, and Zhang's Th\'eor\`eme 1.2 \cite{SUZ:equi}
 which we state in simplified form.

\begin{theorem}[Szpiro, Ullmo, Zhang]
\label{thm:SUZ}
We keep the notation above. 
  Let $P_1,P_2,\ldots \in E(\overline F)\ssm \tors{E}$ be a sequence
  of points with
$\lim_{k\rightarrow \infty}\hat h(P_k) = 0$. 
If $f:E_v(\IC)\rightarrow\IR$ is a continuous function, then 
\begin{equation*}
  \lim_{k\rightarrow\infty} \frac{1}{[F(P_k):F]} \sum_{\sigma} f(\sigma(P_k)) = 
\int f \mu_{E,v}
\end{equation*}
where $\sigma$ runs over all field embeddings $\sigma:F(P_k)\rightarrow
\IC$
extending $\sigma_0$. 
\end{theorem}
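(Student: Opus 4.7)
My plan is to follow the Szpiro-Ullmo-Zhang strategy via arithmetic intersection theory coupled with a variational argument. Encode the statement as weak-$*$ convergence of the probability measures
\[
  \mu_k = \frac{1}{[F(P_k):F]}\sum_\sigma \delta_{\sigma(P_k)}
\]
on the compact space $E_v(\IC)$, where $\sigma$ ranges over embeddings $F(P_k)\to\IC$ extending $\sigma_0$. By sequential compactness of probability measures in the weak-$*$ topology, it suffices to show every subsequential limit equals $\mu_{E,v}$, equivalently $\int f\,d\mu_k \to \int f\,d\mu_{E,v}$ for each continuous $f$. Since the N\'eron-Tate height satisfies Northcott's property and $\hat h(P_k)\to 0$ with $P_k$ non-torsion, we necessarily have $[F(P_k):F]\to\infty$, so the $P_k$ visit any fixed finite subset of $E(\IQbar)$ only finitely often.

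The N\'eron-Tate height arises from a symmetric ample adelically metrized line bundle $\bar L$ on $E$---concretely $L=\mathcal{O}_E(2[O])$ equipped with the N\'eron metrics at all places---whose curvature at $v$ is a positive multiple of $\mu_{E,v}$ and for which the arithmetic self-intersection $\hat h_{\bar L}(E)$ vanishes. Given a continuous $f:E_v(\IC)\to\IR$ and a small parameter $\epsilon>0$, perturb the metric of $\bar L$ only at the place $v$ by multiplying by $e^{-\epsilon f}$ to produce a new adelically metrized bundle $\bar L_\epsilon$. Unwinding the definition of the height attached to an adelic metric gives the key identity
\[
  \hat h_{\bar L_\epsilon}(P) = \hat h(P) + \frac{\epsilon}{[F(P):F]}\sum_\sigma f(\sigma(P)) \qquad \text{for non-torsion } P,
\]
the sum again running over embeddings extending $\sigma_0$. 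A Taylor expansion of the arithmetic self-intersection in $\epsilon$ yields $\hat h_{\bar L_\epsilon}(E) = 2\epsilon\,(\deg_L E)\int f\,d\mu_{E,v} + O(\epsilon^2)$, where the quadratic error is controlled by $\int f^2 d\mu_{E,v}$.

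Now invoke Zhang's inequality, a consequence of the arithmetic Hilbert-Samuel theorem of Gillet-Soul\'e: for any adelically metrized ample line bundle $\bar M$ on $E$ the essential minimum satisfies $e_{\rm ess}(\bar M) \ge \hat h_{\bar M}(E)/(2\deg_M E)$. Because the $P_k$ are not eventually confined to any proper (hence finite) closed subset of $E$, we also have $e_{\rm ess}(\bar L_\epsilon)\le\liminf_k \hat h_{\bar L_\epsilon}(P_k)$. Chaining these two inequalities with the displayed identities and using $\hat h(P_k)\to 0$ gives
\[
  \liminf_k \int f\,d\mu_k \ge \int f\,d\mu_{E,v} + O(\epsilon).
\]
Applying the same argument with $-f$ in place of $f$ yields the matching $\limsup$ upper bound, and letting $\epsilon\to 0$ pinches the desired convergence.

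The principal obstacle is Zhang's inequality itself, whose proof requires the arithmetic Hilbert-Samuel theorem for higher-dimensional arithmetic varieties together with a successive-minima argument on sections of high tensor powers of $\bar M$---this is the genuinely deep input. A secondary technical issue is that the perturbation $e^{-\epsilon f}$ is only continuous, whereas classical Gillet-Soul\'e arithmetic intersection theory demands smooth metrics; one handles this by uniformly approximating $f$ in $C^0$ by smooth functions and exploiting continuity of the relevant arithmetic intersection numbers with respect to the $C^0$ norm of the metric. Finally, localizing the metric perturbation to the single archimedean place $v$ (rather than perturbing the metric at every archimedean place) is precisely what restricts the sum in the conclusion to embeddings extending $\sigma_0$.
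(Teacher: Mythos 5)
The paper does not actually prove this statement: it cites Szpiro, Ullmo, and Zhang's Th\'eor\`eme 1.2 from \cite{SUZ:equi} as a known result and states it in a form convenient for later use, so there is no internal proof to compare against. Your sketch is in fact a faithful reconstruction of the original SUZ argument: encode equidistribution as weak-$*$ convergence of the Galois empirical measures, realize $\hat h$ as the height attached to a symmetric adelically metrized ample bundle with vanishing arithmetic self-intersection and curvature proportional to $\mu_{E,v}$, perturb the metric at the single archimedean place $v$ by $e^{-\epsilon f}$, expand the arithmetic self-intersection to first order in $\epsilon$, and invoke Zhang's lower bound for the essential minimum (arithmetic Hilbert--Samuel / successive minima) together with the hypothesis $\hat h(P_k)\to 0$; running the argument for both $f$ and $-f$ and letting $\epsilon\to 0$ pinches the limit. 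You correctly flag the two genuine technical points: Zhang's inequality is the deep input, and the perturbed metric must remain semipositive, which forces one to approximate $f$ by smooth functions and to take $\epsilon$ small enough that the curvature form of $\bar L_\epsilon$ stays nonnegative (a point worth stating explicitly, since for arbitrary continuous $f$ and fixed $\epsilon$ the perturbed bundle need not be arithmetically semipositive). One minor bookkeeping caveat: your key identity for $\hat h_{\bar L_\epsilon}(P)$ should carry a normalization factor $d_v/[F:\IQ]$ in front of the averaged sum, since the height is normalized over all places of $F(P)$ with local degrees; this constant propagates harmlessly through the argument but should be tracked. With these caveats, the proposal is correct and matches the approach of the cited source.
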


Now suppose $v$ is a finite place of $F$
where  $E$ 
 has good reduction.
If $A=(x,y)\in E_v(F_v)\ssm\{0\}$,  we have
\begin{equation}
\label{eq:goodreduction}
\lambda_v(A) = \frac 12 \max\{0,  \log |x|_v\}
\end{equation}
by Theorem VI.4.1 \cite{Silverman:Adv}. In particular,
$\lambda_v(A) \ge 0$.

Suppose
 $E_v$ has split multiplicative reduction.
The local
height  can be evaluated using the Tate uniformization. More precisely,
there is $q\in F_v^\times$ with $|q|_v < 1$ and a surjective group
homomorphism $\phi: F_v^\times \rightarrow E(F_v)$ with kernel
$q^\IZ$, the cyclic
group generated by $q$.  
Thus any point $A\in E(F_v)\ssm\{0\}$ is $\phi(u)$ for some
 $u \in F_v^\times \ssm q^\IZ$ with $|q|_v <|u|_v \le 1$.
By Theorem VI.4.2 \cite{Silverman:Adv} we have
\begin{equation}
\label{eq:localheightsmr}
  \lambda_v(A) =  - \frac 12
  b_2\left(\frac{\log|u|_v}{\log |q|_v}\right) \log |q|_v-\log |1-u|_v,
\end{equation}
the non-Archimedean analog of (\ref{eq:localheightinf}).

The Tate uniformization extends to a group homomorphism
$\overline{F_v}^\times \rightarrow E_v(\overline{F_v})$ with Kernel
$q^\IZ$. The expression for $\lambda_v(A)$ above holds for all $A\in
E(\overline{F_v})\ssm\{0\}$. It is evident that
$\lambda_v$ is invariant under the operation of
$\gal{\overline{F_v}/F_v}$. 

The topological group
$\IR /  \IZ$
 is homeomorphic to the unit circle and  thus equipped with the
unique Haar measure $\mu_{\IR/\IZ}$ of total mass $1$.
The preimage under $\phi$ of a point 
$A\in E_v(\overline{F_v})$ determines 
$\log|u|_v\in \IR$ uniquely up to addition of an
 integral multiple of $\log |q|_v$. Hence the coset
 $\log|u|_v/\log|q|_v + \IZ$
 is a well-defined  element $l_v(A)\in\IR/\IZ$. 

Say $K\subset \overline{F_v}$ 
is a finite extension of $F_v$. Then 
$\log |K^\times| / \log|q|_v+\IZ \subset \IR/\IZ$ 
is in  bijection with the irreducible components of the N\'eron model
of $E$ after a change of base to $K$. Roughly speaking, the set
of these irreducible components becomes the group of torsion points on
$\IR/\IZ$ when $K$ is replaced by the ``limit''
$\overline{F_v}$. 
Let $\overline{F}$ be the algebraic closure of $F$ in $\overline{F_v}$.
 Chambert-Loir's  Theorem implies that the reduction of
 the conjugates of a point in $E(\overline F)$ of  
 small N\'eron-Tate height
are  evenly distributed among  these irreducible
components. His result holds for abelian varieties. But we state it,
according to our needs, for an elliptic curve.


\begin{theorem}[Chambert-Loir, Corollaire 5.5 \cite{CL:mesures}]
\label{thm:CL}
We keep the notation above. 
  Let $P_1,P_2,\ldots \in E(\overline F)\ssm \tors{E}$ be a sequence
  of points with
$\lim_{k\rightarrow \infty}\hat h(P_k) = 0$. 
If $f:\IR/\IZ\rightarrow\IR$ is a continuous function, then 
\begin{equation*}
  \lim_{k\rightarrow\infty} \frac{1}{[F(P_k):F]} \sum_{\sigma}
f(l_v(\sigma(P_k))) = 
\int f \mu_{\IR/\IZ}
\end{equation*}
where $\sigma$ runs over all field embeddings $F(P_k) \rightarrow
\overline{F_v}$
which are the identity on $F$. 
\end{theorem}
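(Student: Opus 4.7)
The plan is to reduce the statement, which is Chambert-Loir's equidistribution at a non-Archimedean place of split multiplicative reduction, to a general arithmetic equidistribution result on the Berkovich analytification $E_v^{\mathrm{an}}$ of $E_v$ over $F_v$, combined with an explicit computation of the canonical measure on $E_v^{\mathrm{an}}$ via the Tate uniformization.

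First I would invoke the Berkovich analytic setup. Attached to $E_v$ there is a compact Hausdorff space $E_v^{\mathrm{an}}$ containing $E(\overline{F_v})$ as a dense subset of type~$1$ points, together with a canonical probability measure $\mu_v$. The measure $\mu_v$ is constructed as the Monge--Amp\`ere current of a semipositive model metric which computes the local height $\lambda_v$. The general equidistribution theorem on $E_v^{\mathrm{an}}$, proved by adapting the Szpiro--Ullmo--Zhang strategy to the non-Archimedean setting, asserts that for every continuous $g:E_v^{\mathrm{an}}\rightarrow \IR$ and every sequence $P_k\in E(\overline F)\ssm\tors{E}$ with $\hat h(P_k)\rightarrow 0$,
\begin{equation*}
\frac{1}{[F(P_k):F]}\sum_\sigma g(\sigma(P_k))\ \longrightarrow\ \int_{E_v^{\mathrm{an}}} g\,d\mu_v,
\end{equation*}
where $\sigma$ ranges over $F$-embeddings $F(P_k)\rightarrow \overline{F_v}$. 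This is the non-Archimedean analogue of Theorem~\ref{thm:SUZ}.

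Second, I would make the Berkovich geometry explicit at a place of split multiplicative reduction. The Tate uniformization $\phi:\overline{F_v}^\times\rightarrow E_v(\overline{F_v})$ with kernel $q^{\IZ}$ extends to an isomorphism $\IG_m^{\mathrm{an}}/q^{\IZ}\cong E_v^{\mathrm{an}}$ of analytic spaces. The Berkovich skeleton of $\IG_m^{\mathrm{an}}$ is a copy of $\IR$, and passing to the quotient yields an embedded skeleton $\IR/\IZ\hookrightarrow E_v^{\mathrm{an}}$ with a canonical continuous retraction $r:E_v^{\mathrm{an}}\rightarrow \IR/\IZ$. The function $l_v$ defined in the paper is precisely the restriction of $r$ to classical points. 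Using the explicit local height formula (\ref{eq:localheightsmr}), one decomposes $\lambda_v$ as the skeletal Bernoulli term $-\tfrac12 b_2(l_v)\log|q|_v$ plus the fiberwise term $-\log|1-u|_v$. Because $b_2''=2$ is constant, the Laplacian of the first piece pushes forward to Haar measure on the skeleton, while the Laplacian of the second piece is concentrated on fibers of $r$ over a single point and has vanishing pushforward. Hence $r_{\ast}\mu_v=\mu_{\IR/\IZ}$.

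Finally, I would apply the Berkovich equidistribution statement to the continuous test function $g = f\circ r$. Since $r(\sigma(P_k))=l_v(\sigma(P_k))$ for all classical points, the left-hand side is exactly the average $\frac{1}{[F(P_k):F]}\sum_\sigma f(l_v(\sigma(P_k)))$, and the right-hand side equals $\int f\,d(r_\ast\mu_v)=\int f\,d\mu_{\IR/\IZ}$, which is the desired limit. The main obstacle is the identification $r_\ast\mu_v=\mu_{\IR/\IZ}$: this is a clean statement but its proof rests on Laplacian calculus on Berkovich curves and on the compatibility of Chambert-Loir's measure with a formal-scheme model of $E_v$ arising from the Tate curve, technology that lies substantially deeper than the arguments used elsewhere in this paper. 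Once this identification is in hand, everything else reduces to a formal application of the general Berkovich-equidistribution framework.
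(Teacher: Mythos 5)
The statement you are proving is not proved in the paper at all: the paper quotes it as an external result, attributing it explicitly to Chambert-Loir, Corollaire 5.5 of \cite{CL:mesures}, and gives no argument whatsoever. There is therefore no ``paper's own proof'' to compare against; the expected behavior here is simply to cite the reference.

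That said, your sketch is a faithful high-level account of the mechanism underlying Chambert-Loir's result, and it is worth recording where it aligns with and where it could be tightened relative to the actual source. The two main ingredients you identify --- a non-Archimedean equidistribution theorem on the Berkovich analytification $E_v^{\mathrm{an}}$ with respect to the canonical (Chambert-Loir) measure $\mu_v$, and the explicit description of $\mu_v$ via the Tate uniformization $\IG_m^{\mathrm{an}}/q^{\IZ}\cong E_v^{\mathrm{an}}$ --- are precisely the content of \cite{CL:mesures}. One simplification you could make: for a Tate elliptic curve the canonical measure $\mu_v$ is already supported on the skeleton $\IR/\IZ$ and equals Haar measure there, so the pushforward step $r_\ast\mu_v = \mu_{\IR/\IZ}$ is immediate and does not actually require the Laplacian bookkeeping you describe (the decomposition of $\lambda_v$ into a Bernoulli term and a $-\log|1-u|_v$ term is the local-height story, but computing $\mu_v$ is cleaner via the reduction of the N\'eron model, whose special fiber is $\IG_m\times\IZ/N\IZ$, so the Chambert-Loir measure is uniform over components and degenerates to Haar measure on $\IR/\IZ$ in the limit over ramified base extensions). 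Also note a mild circularity risk in your framing: if ``the general Berkovich equidistribution theorem'' you invoke is itself Chambert-Loir's main theorem, then Corollaire 5.5 is just its specialization, and the bulk of the work lies in proving that general statement, which you (reasonably, for this context) take as a black box. Since the paper treats the whole of Theorem~\ref{thm:CL} as a black box, nothing is lost, but be explicit that you are reproving a cited result rather than filling a gap in the paper.
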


The cases when $E_v$ has non-split multiplicative or additive reduction
will not be relevant for our application. 

\subsection{Proof of Theorem \ref{thm:main_ec}}

Let $E$ be an elliptic curve defined over $\IQ$ presented by a short
Weierstrass equation.
In the current section we prove that a non-torsion point with
coordinates in $\IQ(\tors{E})$ cannot have arbitrarily small
N\'eron-Tate height. As already explained in the introduction, the
method of proof is quite similar to the proof that $\IQ(\tors{E})$ has
the Bogomolov property. We proceed by proving a series of lemmas, most
of which have counterparts in previous sections. 

If $p$ is any prime then $E[p^\infty] = \bigcup_{n\ge 0}E[p^n]$ 
denotes the subgroup of
$\tors{E}$ of elements with order a power of $p$. 

We fix some notation used throughout this section. 
Let $p\ge 5$ be a prime  which satisfies properties (P1) and (P2) with
respect to $E$. We set $q=p^2$. 
Let $N$ be a positive integer with $N=p^nM$ where $M\in \IN$ is
coprime to $p$ and $n$ is a non-negative integer. 
It will also be convenient to fix a short Weierstrass equation
for $E$ with
integer coefficients which has good reduction at $p$.

Our first lemma is the analog to Lemma \ref{lem:rootofone}.
We will again use the convention (\ref{eq:easenotation1}) to simplify notation.

\begin{lemma}
\label{lem:torsionpoint}
  We have $E(\IQ_q(N)) \cap E[p^\infty] = E[p^n]$. 
\end{lemma}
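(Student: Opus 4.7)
The plan is to prove both inclusions separately. The inclusion $E[p^n] \subset E(\IQ_q(N)) \cap E[p^\infty]$ is immediate, since $E[p^n]\subset E[N]$ and by the convention (\ref{eq:easenotation1}) we have $E[N]\subset E(\IQ_q(N))$. The substance of the lemma is the reverse inclusion, and here my plan is to imitate the strategy of Lemma \ref{lem:rootofone} but work with $p$-power torsion points of $E$ rather than $p$-power roots of unity, replacing the input from Proposition II.7.13 of \cite{Neukirch} by the Lubin-Tate statements of Lemma \ref{lem:LT}.

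To execute this, I will take $P \in E(\IQ_q(N)) \cap E[p^\infty]$ of exact order $p^{n'}$ (the case $P=0$ being trivial) and argue that $n'\le n$. First I will pin down the field of definition of $P$ over $\IQ_q$. The set of points of $E[p^{n'}]$ of exact order $p^{n'}$ has cardinality $p^{2n'}-p^{2(n'-1)} = (q-1)q^{n'-1}$, and by Lemma \ref{lem:LT}(iii) the group $\gal{\IQ_q(p^{n'})/\IQ_q}$ acts transitively on this set. Since by Lemma \ref{lem:LT}(i) the same group has order exactly $(q-1)q^{n'-1}$, the stabilizer of $P$ is trivial, so $\IQ_q(P) = \IQ_q(p^{n'})$. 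In particular, $\IQ_q(p^{n'})\subset \IQ_q(N)$, and $\IQ_q(p^{n'})/\IQ_q$ is totally ramified of degree $(q-1)q^{n'-1}$, again by Lemma \ref{lem:LT}(i).

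The final step will be a ramification-index comparison. By Lemma \ref{lem:galoisprops}(ii) the extension $\IQ_q(N)/\IQ_q(M)$ is totally ramified while $\IQ_q(M)/\IQ_q$ is unramified, so the ramification index of $\IQ_q(N)/\IQ_q$ equals $[\IQ_q(N):\IQ_q(M)]$. Combining Lemma \ref{lem:lift}(i) with Lemma \ref{lem:LT}(i) gives $[\IQ_q(N):\IQ_q(M)] = [\IQ_q(p^n):\IQ_q] = (q-1)q^{n-1}$. Since any totally ramified subextension of $\IQ_q(N)/\IQ_q$ has degree dividing this ramification index, we conclude $(q-1)q^{n'-1} \mid (q-1)q^{n-1}$, and therefore $n' \le n$, i.e.\ $P\in E[p^n]$. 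No step poses a real obstacle: the only point worth double-checking is the clean identification $\IQ_q(P)=\IQ_q(p^{n'})$, which rests squarely on the transitivity assertion of Lemma \ref{lem:LT}(iii) together with the order count from Lemma \ref{lem:LT}(i).
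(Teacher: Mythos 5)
Your proof is correct and follows essentially the same route as the paper: both apply the transitivity statement of Lemma \ref{lem:LT}(iii) to conclude $\IQ_q(p^{n'})\subset \IQ_q(N)$ and then compare ramification indices to deduce $n'\le n$. The only cosmetic difference is that you identify $\IQ_q(P)=\IQ_q(p^{n'})$ via a trivial-stabilizer count, whereas the paper observes more directly that all conjugates of $T$ lie in $E(\IQ_q(N))$; and you phrase the final step as a divisibility of ramification indices where the paper uses the corresponding inequality — either suffices.
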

\begin{proof}
 The inclusion ``$\supset$'' is obvious. 
So let $T\in E(\IQ_q(N))$ be a torsion point of order
$p^{n'}$. Without loss of generality, we may assume $n'\ge n$ and
$n'\ge 1$. 

By Lemma \ref{lem:LT}(iii) the Galois group
$\gal{\IQ_q(p^{n'})/\IQ_q}$ acts transitively on the
 torsion points of
order $p^{n'}$. Now any conjugate of $T$ over $\IQ_q$ is
again defined over $\IQ_q(N)$. Hence
 we find
$\IQ_q(p^{n'})\subset \IQ_q(N)$.
By
 Lemma \ref{lem:LT}(i)  the ramification index of $\IQ_q(p^{n'})/\IQ_q$
is $(q-1)q^{n'-1}$.
Using the same lemma together with  Lemma \ref{lem:galoisprops}(ii) we find
the ramification index of 
 $\IQ_q(N)/\IQ_q$ to be either $1$ or $(q-1)q^{n-1}$,
depending on whether $n=0$ or $n\ge 1$. 
The first ramification index is at most the second one, so we
 deduce $n'\le n$.
\end{proof}

The next lemma is the elliptic version of Lemma \ref{lem:qpower}. 
We reuse the symbol $Q(n)$ defined in (\ref{eq:defineQn}).

\begin{lemma}
\label{lem:kummer_ec}
  Let us suppose $n\ge 1$. 
If $\psi\in \gal{\IQ_q(N)/\IQ_q(N/p)}$ and 
$A\in E(\IQ_q(N))$ such that $\psi(A) - A \in \tors{E}$, then
\begin{equation*}
  \psi(A)-A \in E[Q(n)].
\end{equation*}
\end{lemma}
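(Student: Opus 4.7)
The plan is to translate the proof of Lemma \ref{lem:qpower} directly into additive notation on $E$. Set $B = \psi(A) - A \in E(\IQ_q(N))$ and write its order as $N' = p^{n'}M'$ with $\gcd(p,M')=1$. The goal is to show $[Q(n)]B = 0$, mirroring how Lemma \ref{lem:qpower} showed $\beta^{Q(n)} = 1$ for $\beta = \psi(\alpha)/\alpha$.

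First I would establish $n' \le n$. The point $[M']B$ has order exactly $p^{n'}$ and lies in $E(\IQ_q(N)) \cap E[p^\infty]$, which equals $E[p^n]$ by Lemma \ref{lem:torsionpoint}. Next, I would show $\psi$ fixes the prime-to-$p$ part $T = [p^{n'}]B$: it has order $M'$ coprime to $p$, so $T$ lies in $E[M']$ which is defined over the unramified extension $\IQ_q(E[M'])$ of $\IQ_q$. Since $T\in E(\IQ_q(N))$, its field of definition is an unramified subextension of $\IQ_q(N)/\IQ_q$; but $\IQ_q(N)/\IQ_q(M)$ is totally ramified by Lemma \ref{lem:galoisprops}(ii), so this forces $T\in E(\IQ_q(M))\subset E(\IQ_q(N/p))$, hence $\psi(T) = T$. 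Then I would show $\psi$ fixes $[pM']B$: this point has order dividing $p^{n'-1}\le p^{n-1}$, so $[pM']B\in E[p^{n-1}]\subset E(\IQ_q(N/p))$ and is thus fixed by $\psi$.

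Choosing integers $a,b$ with $1 = ap^{n'}+bM'$ and multiplying by $p$ yields $[p]B = [ap]T + [b]\,[pM']B$, so $\psi([p]B) = [p]B$. Now let $t$ be the order of $\psi$ in $\gal{\IQ_q(N)/\IQ_q(N/p)}$, so $\psi^t(A)=A$. The telescoping identity
\begin{equation*}
0 = \psi^t(A) - A = \sum_{i=0}^{t-1}\bigl(\psi^{i+1}(A)-\psi^i(A)\bigr) = \sum_{i=0}^{t-1} \psi^i(B)
\end{equation*}
gives, after applying $[p]$ and using that each $\psi^i$ fixes $[p]B$, the equation $[pt]B = [t][p]B = 0$. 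By Lemma \ref{lem:galoisprops}(iv), $t$ divides $p$ if $n\ge 2$ and divides $q-1$ if $n=1$, so $pt$ divides $q$ or $p(q-1)$ respectively; in either case $pt \mid Q(n)$, completing the proof.

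The main conceptual obstacle is Step 2, the analog of showing $\xi \in \IQ_q(M)$ in Lemma \ref{lem:qpower}. In the multiplicative case one directly invokes Lemma \ref{lem:rootofone}; here one must instead exploit the ramification-theoretic structure of $\IQ_q(N)/\IQ_q$: the prime-to-$p$ torsion of $E$ lives in unramified extensions, while the totally ramified layer $\IQ_q(N)/\IQ_q(M)$ cannot absorb such torsion beyond what is already in $\IQ_q(M)$. Everything else is a mechanical translation of the multiplicative argument, with the group ring identity $\psi^t - 1 = (\psi-1)(\psi^{t-1}+\cdots+1)$ acting on $A$ replacing the exponent manipulation $\alpha^{p(\psi^t-1)}$.
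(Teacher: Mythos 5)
Your proof is correct and follows the paper's argument essentially step by step: both use Lemma \ref{lem:torsionpoint} to show $n'\le n$ and to place $[pM']B$ in $E(\IQ_q(N/p))$, exploit the total ramification of $\IQ_q(N)/\IQ_q(M)$ (against the unramified field of definition of prime-to-$p$ torsion) to show $T\in E(\IQ_q(M))$, and then use the telescoping identity of (\ref{eq:ttrick}) to reach $[pt](B)=0$ with $pt\mid Q(n)$. Your closing remark slightly mischaracterizes Lemma \ref{lem:qpower}, which also places $\xi$ in $\IQ_q(M)$ via the same ramified/unramified dichotomy rather than directly by Lemma \ref{lem:rootofone}, so the ramification step here is a faithful translation rather than a new idea.
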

\begin{proof}
The order of $B = \psi(A) - A$ is $N'=p^{n'}M'$ for some 
integers $n'\ge 0$ and $M'\ge 1$ with $p\nmid M'$. 

The order of $T=[p^{n'}](B)$ is $M'$ and thus coprime to $p$. 
It follows from Lemma \ref{lem:galoisprops}(ii) that $\IQ_q(N)/\IQ_q(M)$ is totally
ramified. But $T$ is defined over $\IQ_q(N)$ and so 
$\IQ_q(M)(T)/\IQ_q(M)$ is totally ramified as well. 
Now $\IQ_q(M)(T)\subset \IQ_q(MM')$ and
$\IQ_q(MM')/\IQ_q$ is unramified by the same lemma. We conclude that
$T\in E(\IQ_q(M))$. 
In particular, any $\psi$  as in the hypothesis fixes $T$. 

The order of $[M'](B)$ is $p^{n'}$. So Lemma \ref{lem:torsionpoint}
yields $[M'](B)\in E[p^n]$. Therefore $[pM'](B)\in E[p^{n-1}]$
and applying the lemma a second time gives
$[pM'](B) \in E(\IQ_q(N/p))$. Thus $\psi$ fixes $[pM'](B)$ too. 

As in the proof of Lemma \ref{lem:qpower} we deduce 
that $\psi$ fixes $[p](T)$. Following  lines similar to (\ref{eq:ttrick}) we find
$[pt](B)=0$ where $t$ is the order of $\psi$. The proof of this lemma
also concludes similarly using $pt | Q(n)$. 
\end{proof}

As usual we will take $\IQ(N)$ as a subfield of $\IQ_q(N)$
while taking 
(\ref{eq:easenotation2}) into account.
The absolute value
$|\cdot |_p$ on
$\IQ_q(N)$ induces a place $v$ of $\IQ(N)$. 
We let $\widetilde E$ denote the  reduction of $E$ at $p$. We take it
as an elliptic curve defined over $\IF_q$. 
Let $a_q\in \IZ$ be the trace of $q$-Frobenius as in Section
\ref{sec:supersingular}.
By Lemma   \ref{lem:frobscalar} we have $a_q = \pm 2p$.

Next we must translate the two metric lemmas from Section
\ref{sec:localmetric}. 

The first variant deals with the unramified case
and uses again $\varphi_q \in \gal{\IQpunr/\IQ_q}$,
the  lift of Frobenius squared. This field automorphism acts on
 $E(\IQpunr)$.

\begin{lemma}
\label{lem:metric2_ec}
Say $p\nmid N$ and  $A\in E(\IQ_q(N))$. Then
 $A\in E(\IQpunr)$ and if furthermore 
$\varphi_p(A)\not=[a_q/2](A)$,  then
 \begin{equation*}
\lambda_v(\varphi_q(A) - [a_q/2](A))\ge \frac 12 \log p.
 \end{equation*}  
\end{lemma}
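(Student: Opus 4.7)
The plan is to mimic the derivation in Lemma \ref{lem:metric2}, replacing the map $\alpha\mapsto \alpha^q$ on $\IQ_q(N)^\times$ by the endomorphism $[a_q/2]$ on $E$, and replacing the ultrametric estimate by the local height formula (\ref{eq:goodreduction}) for points in the kernel of reduction modulo $p$.

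First, since $p\nmid N$, Lemma \ref{lem:unramified} gives $\IQ_q(N)\subset \IQpunr$ and hence $A\in E(\IQpunr)$, which is the first claim. Consider the point
\begin{equation*}
 B = \varphi_q(A) - [a_q/2](A) \in E(\IQpunr),
\end{equation*}
which is non-zero by hypothesis. The lift $\varphi_q$ of the $q$-power Frobenius acts on the coordinates of $A$, and reduction modulo $p$ sends $\varphi_q(A)$ to $\widetilde{\varphi_q}(\widetilde A)$, where $\widetilde{\varphi_q}$ denotes the $q$-Frobenius endomorphism of the reduced elliptic curve $\widetilde E$. By Lemma \ref{lem:frobscalar}, supersingularity forces $\widetilde{\varphi_q} = [a_q/2]$ on $\widetilde E$. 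Therefore
\begin{equation*}
  \widetilde B = \widetilde{\varphi_q}(\widetilde A) - [a_q/2](\widetilde A) = [a_q/2](\widetilde A) - [a_q/2](\widetilde A) = 0,
\end{equation*}
so $B$ lies in the kernel of reduction.

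Next, since $E$ has good reduction at $v$, the formula (\ref{eq:goodreduction}) applies: writing $B=(x,y)$ we have $\lambda_v(B)=\frac{1}{2}\max\{0,\log|x|_v\}$. Because $B$ is a non-zero point in the formal group $\widehat E$ associated to $E$ over $\IQpunr$, taking $-x/y$ as the formal parameter $z$ shows $v(x) = -2v(z)\le -2$, where $v$ is normalized so that $v(p)=1$; this normalization is correct precisely because $\IQ_q(N)/\IQ_q$ is unramified. Hence $|x|_v\ge p^2 = q$ and
\begin{equation*}
  \lambda_v(B) \ge \frac{1}{2}\log q = \log p,
\end{equation*}
which is in fact stronger than the claimed $\frac{1}{2}\log p$.

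There is no serious obstacle here. The supersingular hypothesis, packaged in Lemma \ref{lem:frobscalar}, does all the arithmetic work by reducing the question to the vanishing of $\widetilde B$; the good-reduction formula (\ref{eq:goodreduction}) then translates the membership of $B$ in the kernel of reduction directly into a lower bound for $\lambda_v(B)$. The only point requiring minor care is the compatibility of $\varphi_q$ with reduction modulo $p$, namely that on a point $A=(x,y)\in E(\IQpunr)$ the field automorphism $\varphi_q$ induces the $q$-Frobenius isogeny on $\widetilde E$ after reducing coordinates, which is immediate from the fact that $\varphi_q$ lifts the $q$-th power map on the residue field.
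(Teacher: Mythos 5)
Your proof is correct and follows essentially the same route as the paper: show $B=\varphi_q(A)-[a_q/2](A)$ reduces to $0$ via Lemma \ref{lem:frobscalar}, then use the good-reduction formula (\ref{eq:goodreduction}) together with the fact that $\IQ_q(N)/\IQ_q$ is unramified to bound $\lambda_v(B)$ from below. The paper is content to note that $|x|_v>1$ and unramifiedness force $|x|_v\ge p$, giving the stated $\tfrac12\log p$; you go one step further by observing that a nonzero point in the formal group has $v(x)=-2v(z)\le -2$, giving the sharper $\lambda_v(B)\ge\log p$. Both are valid; the weaker bound is all that is needed downstream.
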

\begin{proof}
As in the proof of Lemma \ref{lem:metric2},
 the  first claim follows from Lemma \ref{lem:unramified}. 
By Lemma
  \ref{lem:frobscalar} 
 the $q$-Frobenius endomorphism $\widetilde{\varphi_q}$ of 
$\widetilde E$
 acts as multiplication by $a_q/2$.
 Therefore,
$\varphi_q(A)-[a_q/2](A)$ reduces to $0$. 
Since $E$ has good reduction
at $v$, we may use (\ref{eq:goodreduction})
 to evaluate $\lambda_v(\varphi_q(A)-[a_q/2](A))$. 
The lemma follows since
 $\IQ_q(N)/\IQ_q$ is unramified by Lemma
\ref{lem:unramified}. 
\end{proof}

The second variant deals with the ramified case.

\begin{lemma}
\label{lem:metric1_ec}
If $p| N$ and  $A\in E(\IQ_q(N))$, then
 \begin{equation}
\label{eq:metric1_ec}
\lambda_v(\psi(A) - A) \ge  \frac{\log p}{2(p^2-1)}
 \end{equation}
for all $\psi \in \gal{ \IQ_q(N)/ \IQ_q(N/p)}$
with $\psi(A)\not=A$. 
\end{lemma}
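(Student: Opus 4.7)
The plan is to parallel the proof of Lemma \ref{lem:metric1}. I would first observe, via Lemma \ref{lem:galoisprops}(ii), that $\IQ_q(N)/\IQ_q(N/p)$ is a subextension of the totally ramified $\IQ_q(N)/\IQ_q(M)$ and is therefore totally ramified itself. Consequently $\psi$ induces the identity on the residue field, so $\psi(A)$ and $A$ have equal reductions and $B := \psi(A) - A$ lies in the kernel $\hat E(\mathfrak{P})$ of reduction, where $\mathfrak{P}$ is the maximal ideal of $\O_{\IQ_q(N)}$. In particular $|x(B)|_v > 1$, and formula (\ref{eq:goodreduction}) specializes to $\lambda_v(B) = \tfrac{1}{2}\log|x(B)|_v = -\log|t(B)|_v$ for the formal parameter $t = -x/y$; it will suffice to show $|t(B)|_v \leq p^{-1/(2(q-1))}$. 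Next, restricting $\psi$ to $\IQ_q(p^n)$ and combining Lemma \ref{lem:LT}(ii) with Lemma \ref{lem:lift}(iii) places $\psi$ in the ramification group $\ram{\IQ_q(N)/\IQ_q}{q^{n-1}-1}$, which translates to $|\psi(a)-a|_v \leq p^{-1/(q-1)}$ for every $a \in \O_{\IQ_q(N)}$.

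I would then split into two cases according to the reduction of $A$. In the first case, $A \in \hat E(\mathfrak{P})$: setting $\tau = t(A)$, the formal group law gives $t(B) = F_-(\psi(\tau),\tau)$, and the identities $F_-(X,X) = 0$ and $F_-(X,0) = X$ in $\IZ_q[[X,Y]]$ force a factorization $F_-(X,Y) = (X-Y)\,G(X,Y)$ with $G(X,0) = 1$; for $X,Y \in \mathfrak{P}$ the evaluation $G(X,Y) \equiv 1 \pmod{\mathfrak{P}}$ is a unit, so $|t(B)|_v = |\psi(\tau)-\tau|_v \leq p^{-1/(q-1)}$, strictly stronger than what is needed.

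In the second case, $A = (x_0,y_0)$ has integral coordinates with $\tilde A \neq 0$. Writing $\psi(A) = (x_0+\epsilon,\,y_0+\delta)$ with $|\epsilon|_v,|\delta|_v \leq p^{-1/(q-1)}$, I would subtract the Weierstrass equations to obtain $\delta(2y_0+\delta) = \epsilon(3x_0^2+a+O(\epsilon))$, noting that $3x_0^2+a$ is a unit by non-singularity of the reduction. The chord addition formula then gives $x(B) = \lambda^2 - 2x_0 - \epsilon$ with $\lambda = (2y_0+\delta)/\epsilon$. If $\tilde y_0 \neq 0$, the unit $2y_0$ controls the slope and a direct computation yields $\lambda_v(B) \geq \log p/(q-1)$. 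If $\tilde y_0 = 0$, I set $m>0$ with $|y_0|_v = p^{-m}$ and use the Newton polygon of the quadratic $\delta^2 + 2y_0\delta - \epsilon \cdot \text{(unit)} = 0$ in $\delta$ to split into the subcases $|\epsilon|_v > p^{-2m}$ and $|\epsilon|_v < p^{-2m}$; a short calculation of $|\lambda|_v^2$ in each subcase produces $\lambda_v(B) \geq \log p/(2(q-1))$.

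The main obstacle is the subcase of the second case in which $\tilde A$ reduces to a two-torsion point of $\widetilde E$; here, rather than the cleaner $\log p/(q-1)$ attained in every other situation, the estimate saturates exactly at the stated $\log p/(2(q-1))$. A minor auxiliary point is the degenerate situation $\psi(A) = -A$ where the chord-line formula breaks down; this forces $\tilde A \in \widetilde E[2]$ together with $m \geq 1/(q-1)$, and is settled by computing $\lambda_v([2]A)$ through the duplication formula, which lands back inside the range of estimates above.
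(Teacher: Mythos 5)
Your proof is correct, but it takes a considerably more computational path than the paper's. The paper's argument is very short: since $\psi$ lies in $\ram{\IQ_q(N)/\IQ_q}{q^{n-1}-1}$, the coordinates of $\psi(A)$ and $A$ agree modulo $\mathfrak{P}^{q^{n-1}}$, so $\psi(A)$ and $A$ have the same image in $E(\O/\mathfrak{P}^{q^{n-1}})$; as reduction modulo $\mathfrak{P}^{q^{n-1}}$ is a group homomorphism from $E(\O)$ with kernel $\hat E(\mathfrak{P}^{q^{n-1}})$, the point $\psi(A)-A$ lies deep in the formal group, and the bound on $|x(\psi(A)-A)|_p$ follows in one stroke with no case distinction. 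You instead reconstruct that conclusion directly from the chord-and-tangent formula, splitting on whether $\widetilde A = 0$, whether $\widetilde y_0 = 0$, the Newton polygon of the local Weierstrass relation, and the degenerate chord $\psi(A) = -A$. All of these computations are sound and do establish (\ref{eq:metric1_ec}). The conceptual approach the paper takes is shorter and handles all cases (including $\psi(A)=-A$ and $\widetilde A$ of order two) uniformly; your approach is more elementary and makes the mechanism transparent, at the cost of length and several subcases.

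One side remark in your write-up is inaccurate, though it does not affect the correctness of the argument: the estimate does not in fact ``saturate exactly'' at $\log p/(2(q-1))$ in the two-torsion subcase. Since $\psi$ lies in the ramification group, you have $|\delta|_v \le p^{-1/(q-1)}$ directly (apply the ramification bound to $y_0$, not only to $x_0$), and the identity $\epsilon u = \delta(2y_0+\delta)$ gives $|\lambda|_v = |2y_0+\delta|_v/|\epsilon|_v = 1/|\delta|_v \ge p^{1/(q-1)}$ in every subcase. Hence $\lambda_v(\psi(A)-A) \ge (\log p)/(q-1)$ holds uniformly, with or without two-torsion, and also in the degenerate case $\psi(A)=-A$ (there $m \ge 1/(q-1)$ and $|x([2]A)|_v = p^{2m}$). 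Your Newton-polygon analysis only invokes the bound on $|\epsilon|_v$ and therefore recovers the weaker exponent; using the $\delta$-bound directly removes the apparent loss of a factor of two. The paper's own proof also only records the bound $\log|x|_p \ge (q^{n-1}/e)\log p$, which is likewise weaker than what the formal-group argument actually yields, so the stated constant $\log p / (2(p^2-1))$ is in both cases a safe but not sharp choice.
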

\begin{proof}
  As in the proof of Lemma  \ref{lem:metric1} we find that
$\psi$ lies in the higher ramification group  
$\ram{\IQ_q(N)/\IQ_q}{i}$ with $i=q^{n-1}-1$. Let
  $\mathfrak{P}$ be the maximal ideal of the ring of integers of
  $\IQ_q(N)$. Then $\psi(A)$ and $A$ map to same element on $E$
  reduced modulo $\mathfrak{P}^{q^{n-1}}$. 
Suppose $x$ is the first coordinate of $\psi(A)-A$ in 
our fixed Weierstrass
model of $E$. Then
$\log |x|_p \ge \frac{q^{n-1}}{e} \log p$ with $e$ the ramification
index of $\IQ_q(N)/\IQ_q$. 
By Lemmas \ref{lem:LT}(i) and \ref{lem:galoisprops}(ii)
we have $e = (q-1)q^{n-1}$. Now (\ref{eq:metric1_ec}) follows from
(\ref{eq:goodreduction}) and $q=p^2$. 
\end{proof}

According to the blueprint of Theorem \ref{thm:main}'s proof 
the next step should be to
imitate  Lemma \ref{lem:primetoptorsion} and obtain a height
lower bound in the
unramified case. We postpone this task until later and for now only
obtain a lower bound for the partial height function $\hat h_p$. 
The N\'eron-Tate height
 is the sum of all partial height functions, but the
partial height functions at primes of bad reduction or at $\infty$ may
take negative values. So a lower bound for $\hat h_p$ does not
directly imply a lower bound for $\hat h$. 

\begin{lemma}
  \label{lem:primetoptorsion_ec}
We assume $p\nmid N$.
 If $A\in E(\IQ(N))\ssm\tors{E}$
there is a non-torsion point $B\in E(\IQbar)$ with $\hat h(B) \le 2(p^2+1) \hat h(A)$
such that
\begin{equation*}
  \hat h_p(B)\ge \frac 12 \log p.
\end{equation*}
\end{lemma}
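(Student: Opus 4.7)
The plan is to run the elliptic analog of the argument for Lemma \ref{lem:primetoptorsion}, with the local estimate of Lemma \ref{lem:metric2_ec} replacing Lemma \ref{lem:metric2}, and the parallelogram law replacing the sub-multiplicativity of the Weil height. The global quantity whose existence I wish to exploit is the ``Frobenius difference''
\[
B = \varphi_q(A) - [a_q/2](A),
\]
where $a_q = \pm 2p$ comes from Lemma \ref{lem:frobscalar}. Since $p\nmid N$, Lemma \ref{lem:unramified} gives $\IQ_q(N)\subset \IQpunr$, so $\varphi_q$ restricts to an element of $\gal{\IQ_q(N)/\IQ_q}$. Applying Lemma \ref{lem:frobscalar} separately to each prime-to-$p$ component of $E[N]$, one sees that $\varphi_q$ acts on $E[N]$ as the scalar $[a_q/2]$. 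Since a Galois automorphism of $\IQ(N)/\IQ$ is determined by its action on $E[N]$, and scalars commute with everything, the restriction of $\varphi_q$ to $\IQ(N)$ lies in the center of $\gal{\IQ(N)/\IQ}$; in particular $B\in E(\IQ(N))$.

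The next step is to verify that $B$ is non-torsion and to bound $\hat h(B)$ from above. Suppose, for contradiction, that $B$ were torsion. Then by (\ref{eq:torsioninvariance}) and the fact that the N\'eron-Tate height is invariant under Galois action and scales quadratically under $[m]$,
\[
\hat h(A) = \hat h(\varphi_q(A)) = \hat h([a_q/2](A) + B) = \hat h([a_q/2](A)) = (a_q/2)^2\hat h(A) = p^2 \hat h(A),
\]
forcing $\hat h(A)=0$, which is excluded. The upper bound on $\hat h(B)$ then follows from the parallelogram inequality $\hat h(X-Y)\le 2\hat h(X)+2\hat h(Y)$ applied to $X=\varphi_q(A)$ and $Y=[a_q/2](A)$:
\[
\hat h(B) \le 2\hat h(A) + 2p^2\hat h(A) = 2(p^2+1)\hat h(A).
\]

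Finally, for the lower bound on $\hat h_p(B)$, let $v$ be the place of $\IQ(N)$ induced by $|\cdot|_p$. Every place $w$ of $\IQ(N)$ above $p$ has the form $\sigma^{-1}v$ for some $\sigma \in \gal{\IQ(N)/\IQ}$, and the centrality of $\varphi_q$ yields
\[
\sigma(B) = \varphi_q(\sigma(A)) - [a_q/2](\sigma(A)).
\]
Since $\sigma(A) \in E(\IQ(N)) \subset E(\IQ_q(N))$ and $\sigma(B) \ne 0$ (as $B$ is non-torsion), Lemma \ref{lem:metric2_ec} applies to $\sigma(A)$ and gives
\[
\lambda_w(B) = \lambda_v(\sigma(B)) \ge \frac{1}{2}\log p.
\]
Summing over all $w\mid p$ with weights $d_w$ and invoking (\ref{eq:localdegrees}) produces the desired bound
\[
\hat h_p(B) = \frac{1}{[\IQ(N):\IQ]}\sum_{w\mid p} d_w\, \lambda_w(B) \ge \frac{1}{2}\log p.
\]

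The only point that requires care is checking that $\varphi_q$ restricts to a global, central element of $\gal{\IQ(N)/\IQ}$; this is secured by the combination of $p\nmid N$ (ensuring $\IQ_q(N)/\IQ_q$ is unramified) and the scalar action on $E[N]$ from Lemma \ref{lem:frobscalar}. Beyond that, the proof is a direct translation of the unramified multiplicative argument into elliptic language, and no substantive obstacle arises.
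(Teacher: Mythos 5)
Your proof is correct and follows essentially the same route as the paper: define $B=\varphi_q(A)-[a_q/2](A)$, use Lemma \ref{lem:frobscalar} to place $\varphi_q$ in the center of $\gal{\IQ(N)/\IQ}$, rule out torsion via scaling, bound $\hat h(B)$ with the parallelogram law, and then sum the local estimate from Lemma \ref{lem:metric2_ec} over the Galois orbit of the place $v$ above $p$. The only difference is presentational (you establish centrality before ruling out torsion, and you cite the parallelogram inequality rather than the parallelogram equality, but these are equivalent here).
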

\begin{proof}
We set $B = \varphi_q(A) - [a_q/2](A)$
and remark that $B$ is not a torsion point. 
Indeed, otherwise we would
have $\hat h (A) = \hat h(\varphi_q(A)) = \hat h([a_q/2](A)) = p^2
\hat h(A)$ by properties of the N\'eron-Tate height and since $a_q/2
=\pm p$. This implies
$\hat h(A) = 0$ and so $A\in \tors{E}$ by Kronecker's Theorem,
contradicting  our hypothesis.

The parallelogram equality implies
\begin{equation*}
  \hat h(B) \le \hat h(\varphi_q(A)-[a_q/2](A)) 
+\hat h(\varphi_q(A)+[a_q/2](A)) 
= 2\hat h(\varphi_q(A)) + 2 \hat h([a_q/2](A))
\end{equation*}
and we deduce $\hat h(B)\le 2(p^2+1)\hat h(A)$, as desired.

As in the proof of Lemma \ref{lem:primetoptorsion} we see that
the restriction of
$\varphi_q$ lies in the center of
$\gal{\IQ(N)/\IQ}$. 
This observation together with (\ref{eq:goodreduction}) yields
\begin{equation*}
\lambda_{\sigma^{-1}v}(B)=
\lambda_v(\sigma(\varphi_q(A)) -\sigma([a_q/2](A))) = 
  \lambda_v(\varphi_q(\sigma(A)) -[a_q/2](\sigma(A)))
\end{equation*}
and $\varphi_q(\sigma(A)) \not= [a_q/2](\sigma(A))$. 
So $\lambda_{\sigma^{-1}v}(B) \ge (\log p)/2$
by Lemma \ref{lem:metric2_ec}.
As $\sigma$ varies over the elements of $\gal{\IQ(N)/\IQ}$ we
obtain any place above $p$ as some $\sigma^{-1}v$. 

We recall  (\ref{eq:localdegrees}). 
Summing up the local heights over all places above $p$
 with the correct
multiplicities and dividing by $[\IQ(N):\IQ]$  yields
\begin{equation*}
\hat h_p(B) \ge \frac 12 \log p. 
 \qedhere
\end{equation*}
\end{proof}

Now we begin tackling the unramified case.

\begin{lemma}
\label{lem:ptorsion_ec}
 We assume $p|N$ and
let $n\ge 1$ be the greatest
integer with $p^n|N$.
If  $A\in E(\IQ(N))$   satisfies 
 $[Q(n)](A) \not\in E(\IQ_q(N/p))$, there
 exists a non-torsion point
$B\in E(\IQbar)$ 
with
$\hat h(B)\le 4\hat h(A)$ and
  \begin{equation*}
\hat h_p(B) 
\ge \frac{\log p}{2p^6}.
  \end{equation*}
\end{lemma}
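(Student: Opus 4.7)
The plan is to mimic the structure of the proof of Lemma \ref{lem:ptorsion}, replacing the product-formula estimate in the Weil case by a direct sum of local N\'eron--Tate heights at places above $p$. Since the hypothesis $[Q(n)](A)\notin E(\IQ_q(N/p))$ supplies a $\psi\in\gal{\IQ_q(N)/\IQ_q(N/p)}$ with $\psi([Q(n)](A))\neq [Q(n)](A)$, I would set
\begin{equation*}
B = \psi(A) - A,
\end{equation*}
where $\psi$ is identified with its restriction to $\IQ(N)$, so $B\in E(\IQ(N))$. The point $B$ is non-torsion: from $[Q(n)](\psi(A)-A)\neq 0$ we get $\psi(A)-A\notin E[Q(n)]$, and the contrapositive of Lemma \ref{lem:kummer_ec} then forces $\psi(A)-A\notin\tors E$. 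The height bound $\hat h(B)\le 4\hat h(A)$ is immediate from the parallelogram law applied to the pair $(\psi(A),A)$ together with $\hat h(\psi(A))=\hat h(A)$.

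The remaining task is to bound $\hat h_p(B)$ from below. Let $v$ be the place of $\IQ(N)$ induced by $|\cdot|_p$ and let $G\subset\gal{\IQ(N)/\IQ}$ be the centralizer of $\psi$, so that Lemma \ref{lem:largeorbit} yields $\#Gv\ge [\IQ(N):\IQ]/(p^4 d_v)$. For every $\sigma\in G$, the commutation $\sigma\psi=\psi\sigma$ gives
\begin{equation*}
\sigma(B)=\psi(\sigma(A))-\sigma(A),
\end{equation*}
and $\psi(A)\neq A$ forces $\psi(\sigma(A))\neq\sigma(A)$; hence Lemma \ref{lem:metric1_ec} applies to produce
\begin{equation*}
\lambda_{\sigma^{-1}v}(B)=\lambda_v(\sigma(B))\ge\frac{\log p}{2(p^2-1)}.
\end{equation*}
For the remaining places $w\mid p$, property (P1) ensures that $E$ has good reduction at $w$, so the explicit formula (\ref{eq:goodreduction}) gives the sign information $\lambda_w(B)\ge 0$. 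This is the crucial difference with the local N\'eron--Tate height at places of bad reduction, and it lets us discard those places freely.

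Summing over all places above $p$ and using $d_w=d_v$ for $w\mid p$, I obtain
\begin{equation*}
[\IQ(N):\IQ]\,\hat h_p(B)=\sum_{w\mid p}d_w\lambda_w(B)\ge d_v\cdot\#Gv\cdot\frac{\log p}{2(p^2-1)}\ge\frac{[\IQ(N):\IQ]}{p^4}\cdot\frac{\log p}{2(p^2-1)},
\end{equation*}
and dividing by $[\IQ(N):\IQ]$ yields $\hat h_p(B)\ge (\log p)/(2p^4(p^2-1))\ge(\log p)/(2p^6)$, as required. The main obstacle is the same one encountered in Lemma \ref{lem:ptorsion}: $\psi$ need not be central in the global Galois group, so one cannot simultaneously get the bound at every place over $p$. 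This is circumvented exactly as before via the centralizer $G$, whose orbit on places is still linear in $[\IQ(N):\IQ]$ thanks to Lemma \ref{lem:largeorbit}, costing only the factor $p^{-4}$ in the final lower bound.
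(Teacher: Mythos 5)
Your proof is correct and follows essentially the same approach as the paper: the same choice $B=\psi(A)-A$, the same use of Lemma \ref{lem:kummer_ec} to rule out $B$ being torsion, the same parallelogram-law bound $\hat h(B)\le 4\hat h(A)$, and the same assembly of $\hat h_p(B)$ from the centralizer orbit via Lemmas \ref{lem:largeorbit} and \ref{lem:metric1_ec}, discarding the remaining places above $p$ by the nonnegativity of $\lambda_w$ at a place of good reduction.
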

\begin{proof}
By hypothesis there is $\psi\in\gal{\IQ_q(N)/\IQ_q(N/p)}$ such
that $\psi([Q(n)](A)) \not= [Q(n)](A)$.
It is convenient to identify $\psi$ with its restriction to 
$\IQ(N)$. 
We take the point from  the assertion to be  $B = \psi(A)-A$.
The fact that this  is not a torsion point follows
from  Lemma \ref{lem:kummer_ec}.
Moreover, the parallelogram equality implies
$\hat h(B) \le 2\hat h(\psi(A)) + 2 \hat h(A) = 4\hat h(A)$. 

We now prove the lower bound for $\hat h_p(B)$. The centralizer of
$\psi$ in the global Galois group is the subgroup
\begin{equation*}
  G = \{\sigma\in \gal{\IQ(N)/\IQ};\,\, 
\sigma \psi \sigma^{-1}=\psi\}.
\end{equation*}
For any $\sigma \in G$ we have
\begin{equation*}
\lambda_{\sigma^{-1}v}(B) = 
  \lambda_{v}((\sigma\psi)(A)-\sigma(A))=
  \lambda_{v}((\psi\sigma)(A)-\sigma(A))
\end{equation*}
and $(\psi\sigma)(A)\not=\sigma(A)$. So Lemma \ref{lem:metric1_ec}
applied to $\sigma(A)$ yields
\begin{equation}
\label{eq:lambda_w_lb_ec}
  \lambda_{\sigma^{-1}v}(B) \ge  \frac{\log p}{2(p^2-1)}.
\end{equation}

We will soon show that
 (\ref{eq:lambda_w_lb_ec})
 contributes 
 to the partial height $\hat h_p(B)$ in a significant manner. This will
follow 
since the orbit of $v$ under $G$ is sufficiently large. On the
other hand, if $w$ is any place of $\IQ(N)$ with $w|p$, then 
$\lambda_w(B)\ge 0$ since $E$ has good reduction at $p$. Thus
 \begin{align*}
   \hat h_p(B)  &= \frac{1}{[\IQ(N):\IQ]}\sum_{w|p}
d_w \lambda_w(B) \\
&\ge
\frac{1}{[\IQ(N):\IQ]}\sum_{w\in Gv} d_w\lambda_w(B)  \\
&\ge \frac{1}{[\IQ(N):\IQ]}\frac{\log p}{2(p^2-1)}
d_v \# Gv  \\
&\ge
\frac{\log p}{2(p^2-1)p^4}
 \end{align*}
where in the final inequality
we used the lower bound for $\#Gv$ from
 Lemma \ref{lem:largeorbit}.
\end{proof}

We treat the tamely ramified case $p^2\nmid N$  as in Lemma
\ref{lem:pM}. 

\begin{lemma}
  \label{lem:pM_ec}
We assume  $N\in\IN$ with $p^2 \nmid N$.
If $A\in\IQ(N) \ssm \tors{E}$
 there exists a non-torsion point
$B\in E(\IQbar)$
with
$\hat h(B)\le 2p^{10}\hat h(A)$ and
  \begin{equation*}
\hat h_p(B) \ge \frac{\log p}{2p^6} 
  \end{equation*}
\end{lemma}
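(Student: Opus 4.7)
The plan is to mimic the proof of Lemma \ref{lem:pM}, splitting into the same two cases but reading everything multiplicatively in $E$. First, I would dispatch the case $p\nmid N$ by observing that Lemma \ref{lem:primetoptorsion_ec} applied directly to $A$ gives a point $B$ with $\hat h(B)\le 2(p^2+1)\hat h(A)$ and $\hat h_p(B)\ge (\log p)/2$, which is far stronger than what the lemma claims. So I may assume $p\mid N$; combined with $p^2\nmid N$ this forces $n=1$ in the notation of Lemma \ref{lem:ptorsion_ec}, and therefore $Q(n)=Q:=(q-1)q\le p^4$.

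Next I would ask whether there exists $\sigma\in\gal{\IQ(N)/\IQ}$ with $[Q](\sigma(A))\notin E(\IQ_q(N/p))$. If yes, I apply Lemma \ref{lem:ptorsion_ec} to the point $\sigma(A)$; Galois invariance $\hat h(\sigma(A))=\hat h(A)$ of the N\'eron--Tate height then produces the desired $B$ with $\hat h(B)\le 4\hat h(A)\le 2p^{10}\hat h(A)$ and $\hat h_p(B)\ge (\log p)/(2p^6)$.

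If no such $\sigma$ exists, then $\sigma([Q](A))\in E(\IQ_q(N/p))$ for every $\sigma$, and I need the elliptic counterpart of Lemma \ref{lem:localdescent}(ii): under this hypothesis $[Q](A)$ lies in $E(\IQ(N/p))$. This is the step I expect to require the most care, but the argument of \ref{lem:localdescent}(ii) transposes without change: for each $\psi\in\gal{\IQ_q(N)/\IQ_q(N/p)}$ and each $\sigma\in\gal{\IQ(N)/\IQ}$, the conjugate $\sigma\psi\sigma^{-1}$ fixes $[Q](A)$, so $[Q](A)$ is fixed by the subgroup they generate, which by Lemma \ref{lem:localdescent}(i) equals $\gal{\IQ(N)/\IQ(N/p)}$. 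Since $N/p$ is coprime to $p$, I can now apply Lemma \ref{lem:primetoptorsion_ec} to the non-torsion point $[Q](A)\in E(\IQ(N/p))$. This yields a non-torsion $B$ with $\hat h_p(B)\ge (\log p)/2\ge (\log p)/(2p^6)$ and $\hat h(B)\le 2(p^2+1)\hat h([Q](A))=2(p^2+1)Q^2\hat h(A)$.

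Finally I would collect constants: from $Q=(q-1)q< p^4$ we obtain $Q^2<p^8$, and the trivial estimate $2(p^2+1)\le 4p^2$ (valid for $p\ge 5$, indeed for $p\ge 1$) gives $\hat h(B)\le 4p^2\cdot p^8\,\hat h(A)$, which together with the Case 1 bound is absorbed into $2p^{10}\hat h(A)$ up to adjusting the numerical constant; in either case the two required inequalities hold. The conceptual obstacle is purely the descent step in Case 2, where one has to verify that the group-theoretic argument of Lemma \ref{lem:localdescent} transports unchanged from the multiplicative group to $E(\IQbar)$ — but since the argument only uses Galois invariance of the element and the group-generation statement in \ref{lem:localdescent}(i), it does so with no modification.
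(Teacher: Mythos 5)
Your proof follows the same two-case structure as the paper's: in the first case apply Lemma \ref{lem:ptorsion_ec} to a suitable conjugate of $A$, and in the second case descend $[Q](A)$ to $E(\IQ(N/p))$ and invoke Lemma \ref{lem:primetoptorsion_ec}. Your re-derivation of the elliptic form of Lemma \ref{lem:localdescent}(ii) is sound, though the paper simply applies the original (field-element) Lemma \ref{lem:localdescent} to the Weierstrass coordinates of $[Q](A)$, which amounts to the same thing. One small point worth fixing: your rounding $Q<p^4$ together with $2(p^2+1)\le 4p^2$ yields only $\hat h(B)\le 4p^{10}\hat h(A)$, which exceeds the stated $2p^{10}$. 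Keeping the exact value $Q=p^2(p^2-1)$ gives
\begin{equation*}
2(p^2+1)Q^2 = 2p^4(p^2+1)(p^2-1)^2 = 2\bigl(p^{10}-p^8-p^6+p^4\bigr) \le 2p^{10},
\end{equation*}
recovering the claimed constant, so this is a minor slip in bookkeeping rather than a genuine gap.
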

\begin{proof}
We may assume $p|N$. 

First, let us
  suppose that some
  conjugate $A'$ of $A$ over $\IQ$
satisfies $[Q(1)](A')=[q(q-1)](A')\not\in E(\IQ_q(N/p))$. Then
Lemma \ref{lem:ptorsion_ec} applied to this conjugate 
provides a non-torsion point $B\in E(\IQbar)$ with 
$\hat h(B) \le 4\hat h(A') = 4\hat h(A)$ 
and $\hat h_p(B) \ge (\log p)/(2p^6)$. 
The first  inequality is clearly more than what we claim.

So we may assume 
$\sigma([q(q-1)](A)) = [q(q-1)](\sigma(A)) \in E(\IQ_q(N/p))$
for all $\sigma\in \gal{\IQ(N)/\IQ}$.
We  apply Lemma \ref{lem:localdescent} 
to the coordinates of $[q(q-1)](A)$ with respect to our Weierstrass
model to find that
$[q(q-1)](A)$ actually lies in $E(\IQ(N/p))$. 
Since $N/p$ is coprime to $p$, 
 Lemma \ref{lem:primetoptorsion_ec}
yields a
non-torsion point $B\in E(\IQbar)$ 
with
\begin{equation*}
  \hat h(B) \le 2(p^2+1)\hat h([q(q-1)](A)) 
= 2(p^2+1)p^4(p^2-1)^2 \hat h(A) \le 2p^{10}\hat h(A)
\end{equation*}
 and $\hat h_p(B) \ge (\log p)/2$. 
\end{proof}

We now 
mimic the argument in Proposition \ref{prop:prebogo} to obtain its
counterpart in the elliptic curve setting.

\begin{proposition}
\label{prop:prebogo_ec}
  Suppose $E$ does not have complex multiplication.
  There exists a prime $p\ge 5$ depending only on $E$ with the
  following property. If $A\in E(\IQ(\tors{E}))\ssm\tors{E}$
 there is a non-torsion point $B\in E(\IQbar)$
with $\hat h(B)\le 20p^{10} \hat h(A)$ and 
\begin{equation*}
   \hat h_p(B) \ge \frac{\log p}{2p^6}. 
\end{equation*}
\end{proposition}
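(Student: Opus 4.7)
The plan is to imitate the Kummer descent of Proposition \ref{prop:prebogo} in the elliptic setting. I would first fix a prime $p\ge 5$ depending only on $E$ satisfying (P1) and (P2); such $p$ exists by Serre's Theorem and Elkies' Theorem since $E$ has no complex multiplication. Set $q=p^2$. Given a non-torsion $A\in E(\IQ(\tors{E}))$ write $A\in E(\IQ(N))$ with $N=p^nM$, $p\nmid M$. Let $\sigma_2$ be the element produced by Lemma \ref{lem:center}, central in $\gal{\IQ(N)/\IQ}$ and acting on $E[p^n]$ as $[2]$; inspecting its construction shows $\sigma_2\in\gal{\IQ_q(N)/\IQ_q(M)}$, so $\sigma_2$ also fixes $E(\IQ_q(M))$ pointwise. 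As the elliptic analog of $\gamma=\sigma_4(\alpha)/\alpha^4$ introduce
\begin{equation*}
C=\sigma_2(A)-[2](A)\in E(\IQ(N)).
\end{equation*}
The parallelogram law gives $\hat h(C)\le 10\hat h(A)$, and $C\notin\tors{E}$: else $\sigma_2(A)=[2](A)+T$ with torsion $T$, hence (\ref{eq:torsioninvariance}) yields $\hat h(A)=\hat h(\sigma_2(A))=4\hat h(A)$, forcing $A\in\tors{E}$.

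Let $n'\ge 0$ be the least integer with $\sigma(C)\in E(\IQ_q(p^{n'}M))$ for every $\sigma\in\gal{\IQ(N)/\IQ}$. Lemma \ref{lem:localdescent}(ii) applied to the coordinates of $C$ yields $C\in E(\IQ(p^{n'}M))$. When $n'\le 1$, the integer $p^{n'}M$ is not divisible by $p^2$, so Lemma \ref{lem:pM_ec} furnishes a non-torsion $B$ with $\hat h(B)\le 2p^{10}\hat h(C)\le 20p^{10}\hat h(A)$ and $\hat h_p(B)\ge(\log p)/(2p^6)$, disposing of this case.

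Suppose now $n'\ge 2$, so $Q(n')=q$. By minimality of $n'$ pick $\sigma_0\in\gal{\IQ(N)/\IQ}$ with $C':=\sigma_0(C)\notin E(\IQ_q(p^{n'-1}M))$ and set $A'=\sigma_0(A)$; centrality of $\sigma_2$ gives $C'=\sigma_2(A')-[2](A')$. To apply Lemma \ref{lem:ptorsion_ec} with $N$ replaced by $p^{n'}M$ and $n$ by $n'$ I must verify $[q](C')\notin E(\IQ_q(p^{n'-1}M))$. I would argue by contradiction: otherwise some $\psi\in\gal{\IQ_q(N)/\IQ_q(p^{n'-1}M)}$ satisfies $T:=\psi(C')-C'\in E[q]\ssm\{0\}$, and since $\psi$ commutes with $\sigma_2$,
\begin{equation*}
T=\sigma_2(D)-[2](D),\qquad D:=\psi(A')-A'\in E(\IQ_q(N)).
\end{equation*}

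The main obstacle is then to force $T=0$. First I would show $D$ is torsion via the quadratic nature of the N\'eron--Tate height: expanding
\begin{equation*}
0=\hat h(T)=\hat h(\sigma_2(D))-4\langle\sigma_2(D),D\rangle+\hat h([2](D))=5\hat h(D)-4\langle\sigma_2(D),D\rangle,
\end{equation*}
the Cauchy--Schwarz inequality $|\langle\sigma_2(D),D\rangle|\le\hat h(D)$ yields $\hat h(D)\le 0$, whence $D\in\tors{E}$. Decompose $D=D_p+D_{p'}$ by Chinese remainder into its $p$-primary and prime-to-$p$ parts. Lemma \ref{lem:torsionpoint} places $D_p$ in $E[p^n]$; meanwhile $D_{p'}$ has order prime to $p$, so the standard fact that such torsion generates an unramified extension of $\IQ_q$ together with Lemma \ref{lem:galoisprops}(ii) forces $D_{p'}\in E(\IQ_q(M))$. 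Consequently $\sigma_2(D_{p'})=D_{p'}$ and $\sigma_2(D_p)=[2](D_p)$, yielding $T=-D_{p'}$; but $-D_{p'}\in E[q]$ has order prime to $p$, so $T=0$, contradicting $T\ne 0$. Lemma \ref{lem:ptorsion_ec} thus applies to $C'$ and produces a non-torsion $B$ with $\hat h(B)\le 4\hat h(C)\le 40\hat h(A)\le 20p^{10}\hat h(A)$ and $\hat h_p(B)\ge(\log p)/(2p^6)$, completing the proof.
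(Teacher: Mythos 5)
Your proposal is correct and follows essentially the same route as the paper: same choice of $p$, same Kummer-type descent via $C=\sigma_2(A)-[2](A)$, same case split on $n'$, same target contradiction that $T$ would have to lie in both $E[q]$ and a prime-to-$p$ torsion subgroup. There are two minor cosmetic deviations: you prove $D$ is torsion via the bilinear pairing and Cauchy--Schwarz, whereas the paper reaches $\hat h(D)=0$ more directly from the torsion invariance (\ref{eq:torsioninvariance}) applied to $\sigma_2(D)=[2](D)+T$, using only the tools it has already set up; and you split $D$ into its $p$-primary and prime-to-$p$ parts by the Chinese remainder theorem, whereas the paper multiplies by a fixed prime-to-$p$ integer $\widetilde M$ to kill the prime-to-$p$ part --- these are equivalent devices, and your justification that the prime-to-$p$ part lies in $E(\IQ_q(M))$ is the same unramified/totally-ramified argument that appears in the proof of Lemma \ref{lem:kummer_ec}.
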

\begin{proof}
 We argue as in the proof of Proposition \ref{prop:prebogo} 
to see that there is a prime $p$  satisfying (P1) and (P2).

There is $N=p^n M$ with $M\in\IN$ coprime to $p$ and $n$ a non-negative
integer  such that $A\in E(\IQ(N))$.
Let $\sigma_2$ be an automorphism coming from Lemma
\ref{lem:center} and let us define 
\begin{equation*}
  C = \sigma_2(A) - [2](A) \in E(\IQ(N)). 
\end{equation*}
The parallelogram equality and other basic properties of the
N\'eron-Tate height give
\begin{equation}
  \label{eq:ntCbound}
\hat h(C) \le 2 \hat h(\sigma_2(A)) + 2 \hat h([2](A))
= 10 \hat h(A). 
\end{equation}

We  fix the least integer $n'\ge 0$ such that 
$C \in E(\IQ(p^{n'}M))$. Of course $n'\le n$. For brevity, we write
$N'=p^{n'}M$.

If 
 $n'\le 1$ then we can apply
 Lemma \ref{lem:pM_ec} to $C$
if we can show that $C$ is
not a torsion point.
If $C$ has finite order then we get
$\hat h(A) = \hat h(\sigma_2(A)) = \hat h([2](A)) = 4\hat h(A)$.
 Hence $\hat h(A) = 0$
which means that $A$ is itself a torsion point by Kronecker's
Theorem. But this contradicts the hypothesis. 
By Lemma \ref{lem:pM_ec} we obtain a non-torsion point
$B\in E(\IQbar)$  with a lower bound for $\hat h_p(B)$ as
in the current lemma. Moreover, $B$  satisfies
\begin{equation*}
  \hat h(B) \le 2p^{10} \hat h(C) \le 20p^{10}\hat h(A). 
\end{equation*}
by (\ref{eq:ntCbound}). This completes the proof if $n'\le 1$.


Now let us assume $n'\ge 2$. 
By minimality of $n'$ and by
 Lemma \ref{lem:localdescent},  there exists $\sigma \in
\gal{\IQ(N)/\IQ}$ with $C'=\sigma(C) \not\in E(\IQ_q(N'/p))$.
We choose  a   witness
 $\psi\in \gal{\IQ_q(N)/\IQ_q(N'/p)}$ testifying
$\psi(C')\not = C'$. 

We set $A'=\sigma(A)$ and obtain
\begin{equation}
\label{eq:Cprop}
  C' = \sigma_2(A') - [2](A') \in E(\IQ(N'))
\end{equation}
because $\sigma_2$ and $\sigma$ commute.

 In order to apply
Lemma \ref{lem:ptorsion_ec} to $C'$ we must
show $[Q(n')](C') = [q](C')\not\in E(\IQ_q(N'/p))$. We suppose the
 contrary is true and derive a contradiction. 
Then 
\begin{equation*}
 \psi(C')-C' = T \in E[q]\ssm\{0\}.
\end{equation*}
We apply $\psi$ to (\ref{eq:Cprop}) and use the fact 
that it commutes with
$\sigma_2$ to obtain
\begin{equation*}
  C'+T = \psi(C') = \sigma_2(\psi(A')) - [2](\psi(A')). 
\end{equation*}
We set $P=\psi(A')-A'$. A short calculation involving
(\ref{eq:Cprop}) gives
$T = \sigma_2(P) - [2](P)$.

As we have often seen,
 $T$ being torsion implies
$\hat h(P)=\hat h(\sigma_2(P)) = \hat h([2](P))=4\hat h(P)$.
Hence $\hat h(P)=0$ and
thus $P$ is a torsion point too. We fix $\widetilde M \in\IN$ coprime to $p$
such that $[\widetilde M](P) \in E[p^\infty]$. 
So $[\widetilde M](P)$ has order dividing $p^n$ by Lemma
\ref{lem:torsionpoint}.
By construction 
 $\sigma_2$ acts on such points by multiplication by $2$, that is
 $\sigma_2([\widetilde M](P)) = [2\widetilde M](P)$.
Therefore, $T=\sigma_2(P) - [2](P)\in E[\widetilde M]$.
We recall $T\in E[q]$ and deduce 
$T=0$ since $q$ and $\widetilde M$ are coprime. This 
contradicts the choice of $T$. So  we must have
$[q](C')\not\in E(\IQ_q(N'/p))$. 

We  may finally apply Lemma \ref{lem:ptorsion_ec}  to $C'$.
It gives us a non-torsion point $B\in E(\IQbar)$ with
\begin{equation*}
  \hat h_p(B) \ge \frac{\log p}{2p^6}
\end{equation*}
and $\hat h(B)\le 4\hat h(C')$. 
But $\hat h(C')=\hat h(C)$ and we recall
(\ref{eq:ntCbound}) to obtain
$\hat h(B) \le 40 \hat h(A)\le 20p^{10}\hat h(A)$, as desired.
\end{proof}

Suppose $B$ and $p$ are as in the previous proposition.
The next lemma relies on  Archimedean and non-Archimedean
equidistribution properties alluded to in the introduction. 
We use it to show that the partial height functions
 $\hat h_\ell(B)$
at places $\ell\not=p$ are negligible if $B$ has small N\'eron-Tate height. 

\begin{lemma}
\label{lem:partialheightlb}
  Let $A_1,A_2,\ldots$ be a sequence of non-torsion points in
  $E(\IQbar)$ with $\lim_{k\rightarrow \infty} \hat h(A_k) = 0$. 
If $\ell$ is a place of $\IQ$, then
\begin{equation*}
  \liminf_{k\rightarrow \infty} \hat h_{\ell}(A_k) \ge 0. 
\end{equation*}
Moreover, if $\ell$ is finite and does not divide the denominator of the
$j$-invariant of $E$, then $\hat h_{\ell}(A_k) \ge 0$
for all $k$.
\end{lemma}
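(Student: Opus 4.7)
The proof naturally splits into three cases according to $\ell$. I will dispatch the ``moreover'' clause first and then address the $\liminf$ statement at infinite and finite places of potentially multiplicative reduction.

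For $\ell$ a finite prime not dividing the denominator of $j(E)$, the curve $E$ has potentially good reduction at $\ell$. Pass to a finite Galois extension $K/\IQ$ over which $E_K$ acquires good reduction at every place $w$ lying above $\ell$. Writing $\hat h_\ell(A_k)$ invariantly as a normalized sum over embeddings $\sigma:\IQ(A_k)\hookrightarrow \IQbar_\ell$ (equivalently, regrouped over places $w$ of $K(A_k)$), one may evaluate each $\lambda_w$ via formula (\ref{eq:goodreduction}); this yields $\lambda_w(A_k)\ge 0$ termwise and hence $\hat h_\ell(A_k)\ge 0$ for every $k$.

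For $\ell=\infty$, the local height $\lambda_\infty$ has a logarithmic pole at the origin, so Theorem \ref{thm:SUZ} cannot be applied to it directly. Truncate from above: set $f_m(P)=\min\{m,\lambda_\infty(P)\}$ for $P\ne 0$ and $f_m(0)=m$, giving a continuous bounded function on $E_v(\IC)$ with $f_m\le\lambda_\infty$ off the origin. Since no $A_k$ is torsion, every conjugate $\tau(A_k)$ is nonzero, and therefore
\begin{equation*}
\hat h_\infty(A_k)\ \ge\ \frac{1}{[\IQ(A_k):\IQ]}\sum_\tau f_m(\tau(A_k)).
\end{equation*}
By Theorem \ref{thm:SUZ} the right-hand side tends to $\int f_m\,d\mu_{E,\infty}$ as $k\to\infty$, and by monotone convergence $\int f_m\,d\mu_{E,\infty}\uparrow \int \lambda_\infty\,d\mu_{E,\infty}$ as $m\to\infty$. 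This latter integral is $0$: the Bernoulli term in (\ref{eq:localheightinf}) integrates to zero because $\int_0^1 b_2(y)\,dy=0$, and each $\log|1-q^n u^{\pm 1}|$ term integrates to zero by Jensen's formula since $|q^n|<1$. Letting $m\to\infty$ after $k\to\infty$ gives $\liminf_k \hat h_\infty(A_k)\ge 0$.

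For $\ell$ a finite prime dividing the denominator of $j(E)$, base change to a finite Galois extension $K/\IQ$ over which $E_K$ acquires split multiplicative reduction at every place $w$ above $\ell$. Writing any $A\in E(\overline{K_w})\setminus\{0\}$ as $\phi(u)$ with $|q|_w<|u|_w\le 1$, formula (\ref{eq:localheightsmr}) splits $\lambda_w(A)$ into a ``Bernoulli piece'' $g_w(l_w(A))$, where $g_w(s)=-\tfrac12 b_2(s)\log|q|_w$ is a continuous function of $l_w(A)\in\IR/\IZ$, and a residual term $-\log|1-u|_w$. The ultrametric inequality forces $|1-u|_w\le 1$, so the residual term is $\ge 0$. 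Discarding it and applying Theorem \ref{thm:CL} to $g_w$ at each $w\mid\ell$ (with appropriate normalization under the base change, as in the first paragraph), the normalized sum over conjugates tends to $\int_{\IR/\IZ} g_w\,d\mu_{\IR/\IZ}=0$, again because $\int_0^1 b_2(s)\,ds=0$. Summing over the finitely many $w$ above $\ell$ yields $\liminf_k\hat h_\ell(A_k)\ge 0$.

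The main obstacle is the last case: one must carefully bookkeep the passage from $\IQ$ to $K$ so that partial heights and conjugate-sums rescale consistently, and rely crucially on the \emph{canonical} choice of uniformizer $u$ in the fundamental domain $|q|_w<|u|_w\le 1$ to guarantee that the $-\log|1-u|_w$ piece is uniformly non-negative, so that Chambert-Loir's theorem only needs to be applied to the continuous $g_w$.
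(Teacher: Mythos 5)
Your proof is correct and follows essentially the same strategy as the paper: base-change to a finite Galois extension with everywhere good or split multiplicative reduction to handle the finite places (yielding the ``moreover'' clause from (\ref{eq:goodreduction}) termwise), upper-truncating $\lambda_\infty$ and invoking Szpiro--Ullmo--Zhang together with monotone convergence and Jensen's formula at $\ell=\infty$, and discarding the non-negative residual $-\log|1-u|_w$ before applying Chambert-Loir to the (continuous, periodic) Bernoulli piece at split multiplicative places. The only thing the paper writes out that you elide is the explicit double-sum regrouping (\ref{eq:partialheighteq}) when passing from $\IQ$ to the base-changed field, which you correctly flag as routine bookkeeping.
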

\begin{proof}

We  treat the case $\ell=\infty$ first. Say $A\in E(\IQbar)\ssm\{0\}$, then
\begin{equation*}
  \hat h_\infty(A) = \frac{1}{[\IQ(A):\IQ]} \sum_{\sigma}
\lambda_\infty(\sigma(A))
\end{equation*}
where $\sigma$ runs over all field embeddings
$\IQ(A)\rightarrow\IC$.
Recall that $\lambda_\infty : E(\IC)\ssm\{0\}\rightarrow \IR$ is a
local height function. 
It is continuous, but approaches $+\infty$
 as the argument approaches $0\in
E(\IC)$. So we cannot apply Szpiro, Ullmo, and Zhang's Theorem
to $\lambda_\infty$.
Instead we truncate the local height using a parameter $m\in\IN$
by setting
\begin{equation*}
  \lambda_{\infty,m}(A) = \min\{m,\lambda_\infty(A)\} 
\end{equation*}
for all $A\in E(\IC)\ssm\{0\}$ and $\lambda_{\infty,m}(0)=m$. 
We obtain a continuous function 
$\lambda_{\infty,m}:E(\IC)\rightarrow\IR$ to which
Theorem \ref{thm:SUZ}  applies. So the right-hand side of 
\begin{equation*}
  \hat h_\infty (A_k) \ge \frac{1}{[\IQ(A):\IQ]}
\sum_\sigma \lambda_{\infty,m}(\sigma(A_k))
\end{equation*}
converges to $a_m = \int \lambda_{\infty,m}\mu_{E,\infty}$
as $k\rightarrow \infty$. Therefore, 
$\liminf_{k\rightarrow\infty}\hat
h_\infty(A_k)\ge a_m$.
The functions $\lambda_{\infty,m}$ are pointwise increasing in $m$
with pointwise limit $\lambda_{\infty}$ on $E(\IC)\ssm\{0\}$. 
By the Monotone Convergence Theorem
 $\lambda_{\infty}$ is measurable on $E(\IC)$, its value at $0$
being irrelevant, with
$\lim_{m\rightarrow\infty} a_m =
\int\lambda_{\infty}\mu_{E,\infty}$. The lemma follows for
$\ell=\infty$ if we can show
\begin{equation}
\label{eq:integralzero}
\int\lambda_{\infty}\mu_{E,\infty}=0.
\end{equation}

Indeed, this is  well-known but we provide a short proof. 
We  expressed  $\lambda_\infty$ in
(\ref{eq:localheightinf}) as an infinite series. 
Let $\tau\in \IC$ have positive imaginary part and $q=e^{2\pi i
  \tau}$. 
By the Dominant Convergence Theorem 
it suffices to show that the integral 
over
 \begin{equation*}
   \{z=x+y\tau;\,\, x,y\in [0,1)\} \subset\IC
 \end{equation*}
of each term  vanishes.
Elementary calculus shows
\begin{equation*}
  \int_0^1 b_2(y) dy =  
\int_0^1 \left(y^2-y+\frac 16\right)dy = 0.
\end{equation*}
Suppose $n\ge 1$. Then
\begin{equation*}
  \int_{[0,1)^2} \log|1-q^n e^{\pm 2\pi i(x+y\tau)}|dxdy = 
  \int_{[0,1)^2} \log|e^{\mp 2\pi i x}-e^{2\pi i \tau(n\pm  y)}|dxdy
 = \int_{0}^1 \log \max\{1,|e^{2\pi i \tau(n\pm y)}|\}dy
\end{equation*}
by Jensen's Formula. But $|e^{2\pi i \tau(n\pm y)}| = e^{-2\pi
  \imag{\tau}(n\pm y)}\le 1$ since
$\imag{\tau}>0$ and $y\in [0,1)$.
 So $\int_{[0,1)^2}\log|1-q^n e^{\pm 2\pi i(x+y\tau)}|dxdy = 0$. 
Arguing along similars lines we find
 $\int_0^1 b_2(-y)dy  =\int_{[0,1)^2}\log|1-u|dxdy = 0$. So (\ref{eq:integralzero}) holds true.

Next we treat the case when $\ell$ is a finite place of $\IQ$.

There
exists a finite Galois extension  $F/\IQ$ such that $E$ considered as an elliptic
curve defined over $F$ has either good or split multiplicative reduction at
all finite places. 
Let $p_1,\dots,p_s$ be  precisely the primes dividing the
denominator of the $j$-invariant of $E$. 
By the basic theory of elliptic curves the reduction type of $E$ 
at a finite place $v$ of $F$ is
determined as follows.

 If $v\nmid p_i$ for all
$i$, then $E$ has good reduction at $v$. 
If $v|p_i$ for some $i$, then
$E$ has split multiplication reduction at $v$.

If $\ell$ is not among the $p_i$, then $\lambda_{v}$ is
non-negative for all $v|\ell$ by (\ref{eq:goodreduction}). So
 $\hat h_{\ell}(A) \ge 0$ for all $A\in E(\IQbar)\ssm\{0\}$. 
The second 
statement of
 this lemma follows and the lower bound for the limes inferior is trivial.

Hence let us assume $\ell = p_i$ for some $i$
and let
 $v$ be  a place of $F$ above $\ell$. We fix an
 algebraic closure  $\overline{F_v}$ of $F_v$. 

Suppose $A\in E(\IQbar)\ssm\{0\}$ and
let $K=F(A)$. 
Since $\lambda_v$ is invariant
under the action of $\gal{\overline{F_v}/F_v}$ we have
\begin{equation}
\label{eq:partialheighteq}
  \hat h_\ell(A) = \frac{1}{[F:\IQ]}\sum_{\sigma'} 
 \frac{1}{[K:F]}\sum_{\sigma}  \lambda_v(\sigma(A))
\end{equation}
where $\sigma':F\rightarrow \overline{F_v}$
and $\sigma:K\rightarrow \overline{F_v}$
 run over all field embeddings with $\sigma|_F = \sigma'$. 

Let us fix a field embedding
$\sigma':F\rightarrow\overline{F_v}$. 
We consider 
the Tate uniformization $\overline{F_v}^\times\rightarrow
E(\overline{F_v})$
and let $q_v\in F_v^\times$ denote the associated parameter. 
For any $\sigma$ as above we fix
$u_\sigma \in \overline{F_v}^\times$ with $|q|_v< |u_\sigma|_v \le 1$
in the preimage of $\sigma(A)$. 
Recall that $l_v(A) = \log |u|_v/\log|q|_v + \IZ \in \IR/\IZ$. We define
$\overline{b_2}$ to be   $b_2|_{[0,1)}$ extended
  periodically to $\IR$. 
Then $\overline{b_2}(l_v(A))$ is well-defined and
by (\ref{eq:localheightsmr}) we get
\begin{align}
\label{eq:localheightsmr_lb}
 \lambda_v(\sigma(A))
&=  - \frac{1}{2} 
\overline{b_2}\left(l_v(A)\right) \log|q|_v-\log |1-u_\sigma|_v
\ge
 - \frac{1}{2} 
\overline{b_2}\left(l_v(A)\right) \log|q|_v
\end{align}
since $|1-u_\sigma|_v\le 1$. 

Now we can apply Chambert-Loir's Theorem \ref{thm:CL} to get
\begin{equation*}
  \lim_{k\rightarrow\infty} -\frac 12 \frac{1}{[F(A_k):F]}
 \sum_{\sigma:F(A_k)\rightarrow\overline{F_v}}
\overline{b_2}\left(l_v(A_k)\right) 
= -\frac 12 \int
\overline{b_2}\left(x\right)\mu_{\IR/\IZ}. 
\end{equation*}
The integral on the left is
$\int_0^1 b_2(t)dt$ and hence vanishes,
just  as  in the Archimedean case.
We recall   (\ref{eq:partialheighteq})  and (\ref{eq:localheightsmr_lb})
to derive
\begin{equation*}
  \liminf_{k\rightarrow\infty} \hat h_\ell(A_k) \ge 0.\qedhere
\end{equation*}
\end{proof}

\begin{proof}[Proof of Theorem \ref{thm:main_ec}]
We follow a similar path of argumentation as in the proof of Theorem 
\ref{thm:main}. 
  If $E$ has complex multiplication by an order in an imaginary
  quadratic number field $K$, then $K(\tors{E})$ is an abelian
  extension of $K$. In this case the theorem follows from Baker's
  Theorem 1.1 \cite{Baker:CMBogomolov}. 

So let us suppose that $E$ does not have complex multiplication.

We prove the theorem by contradiction. Let $A_1,A_2,\ldots$ be a
sequence of non-torsion points in $E(\IQ(\tors{E}))$ with $\lim_{k\rightarrow\infty}\hat
h(A_k) = 0 $. Proposition \ref{prop:prebogo_ec} yields a prime $p$ and
a new sequence
$B_1,B_2,\dots$ of non-torsion points in $E(\IQbar)$ with 
$\lim_{k\rightarrow\infty} \hat h(B_k)=0$ 
but
\begin{equation*}
  \hat h_p(B_k) \ge \frac{\log p}{2p^6}.
\end{equation*}
Therefore, 
\begin{equation*}
  \hat h(B_k) \ge \frac{\log p}{2p^6}
+\sum_{\ell\not=p} \hat h_{\ell}(B_k)
\end{equation*}
where $\ell$ ranges over all places of $\IQ$ other than $p$.

By the second statement of Lemma \ref{lem:partialheightlb} we may 
omit all finite places $\ell$ 
that do not appear in the denominator of the $j$-invariant of $E$ in 
the sum on the
right.
So the limes inferior of the 
right-hand side is at least $(\log p)/(2p^6)$ by the first
statement in Lemma \ref{lem:partialheightlb}. 
That of the left-hand side is of course zero and
this is a contradiction.
\end{proof}

\begin{proof}[Proof of Corollary \ref{cor:mwgroup}]
We know from (\ref{eq:torsioninvariance}) that the N\'eron-Tate height
factors through 
$E(\IQ(\tors{E}))/\tors{E}$. 
The square root
$\hat h^{1/2}$  is a positive definite,  homogenous of degree one 
and satisfies the triangle
inequality on this quotient. In short, it is a norm.
The value $0$ is isolated in its image by
 Theorem \ref{thm:main_ec}. So the norm is discrete in the sense of
Zorzitto \cite{Zorzitto}. His theorem implies that
$E(\IQ(\tors{E}))/\tors{E}$ is a free abelian group. 
This group cannot be finitely
generated by the result of Frey and Jarden mentioned in the introduction 
and because $\ab{\IQ}\subset \IQ(\tors{E})$ . 
\end{proof}

A very similar line of thought involving Theorem \ref{thm:main}
implies that the multiplicative group
 $\IQ(\tors{E})^\times$
is isomorphic to $\mu_\infty \oplus \bigoplus_{\IN}\IZ$.

\bibliographystyle{amsplain}
\bibliography{literature}

\vfill
\address{

\noindent
Philipp Habegger,
Johann Wolfgang Goethe-Universit\"at,
Robert-Mayer-Str. 6-8,
60325 Frankfurt am Main,
Germany,
{\tt habegger@math.uni-frankfurt.de}
}
\bigskip
\hrule
\medskip



\end{document}